\documentclass[journal, 12pt, a4paper, onecolumn]{IEEEtran}

\usepackage[cmex10]{amsmath}
\usepackage{amsthm, amsfonts, amssymb, bm}
\usepackage{array}
\usepackage{color}
\usepackage{epsfig}
\usepackage{algorithm, algorithmic}
\usepackage[active]{srcltx}
\usepackage{subfig}
\usepackage{hyperref}
\usepackage{environ}
\usepackage{multirow}

\newtheorem{theorem}{Theorem}[section]
\newtheorem{lemma}[theorem]{Lemma}

\newtheorem{example}[theorem]{Example}
\newtheorem{remark}[theorem]{Remark}

\newtheorem{definition}[theorem]{Definition}


\def\norm#1{\|#1\|}


\def\R{{\mathbb R}}

\def\be{\begin{eqnarray*}}
\def\ee{\end{eqnarray*}}
\def\beq{\begin{equation}}
\def\eeq{\end{equation}}

\def\2q{\quad\quad}

\def\E{{\bf E}}

\def\L{{\mathbb L}}

\def\R{{\mathbb R}}

\def\:{{\,:\,}}

\def\norm#1{{\left\|\,#1\,\right\|}}
\def\abs#1{{\left|\,#1\,\right|}}

\def\norm #1{\|#1\|}
\def\abs #1{|#1|}
\def\argmax{\mathop{\rm arg\,max}}
\def\argmin{\mathop{\rm arg\,min}}
\def\define{:=}
\def\inprod#1#2{\langle #1,\,#2\rangle}

\def\dom{\hbox{dom}}

\begin{document}
\title{Bregman-divergence-guided Legendre exponential dispersion model with finite cumulants (K-LED)}

\author{Hyenkyun Woo
\thanks{School of Liberal Arts, Korea University of Technology and Education, 
hyenkyun@koreatech.ac.kr, hyenkyun@gmail.com}}


\maketitle

\begin{abstract}
Exponential dispersion model is a useful framework in machine learning and statistics. Primarily, thanks to the additive structure of the model, it can be achieved without difficulty to estimate parameters including mean. However, tight conditions on cumulant function, such as analyticity, strict convexity, and steepness, reduce the class of exponential dispersion model. In this work, we present relaxed exponential dispersion model K-LED (Legendre exponential dispersion model with K cumulants). The cumulant function of the proposed model is a convex function of Legendre type having continuous partial derivatives of K-th order on the interior of a convex domain. Most of the K-LED models are developed via Bregman-divergence-guided log-concave density function with coercivity shape constraints. The main advantage of the proposed model is that the first cumulant (or the mean parameter space) of the $1$-LED model is easily computed through the extended global optimum property of  Bregman divergence. An extended normal distribution is introduced as an example of 1-LED based on Tweedie distribution. On top of that, we present $2$-LED satisfying mean-variance relation of quasi-likelihood function. There is an equivalence between a subclass of quasi-likelihood function and a regular $2$-LED model, of which the canonical parameter space is open. A typical example is a regular $2$-LED model with power variance function, i.e., a variance is in proportion to the power of the mean of observations. This model is equivalent to a subclass of beta-divergence (or a subclass of quasi-likelihood function with power variance function). Furthermore, a new parameterized K-LED model is proposed. The cumulant function of this model is the convex extended logistic loss function which is generated by extended log and exp functions. The proposed model includes Bernoulli distribution and Poisson distribution depending on the selection of parameters of the convex extended logistic loss function.
\end{abstract}

\begin{IEEEkeywords}
Exponential dispersion model, Generalized linear model, Exponential families, Log-concave density function, Bregman divergence, Tweedie distribution, Convex function of Legendre type, Quasi-likelihood function, Extended logistic loss function, Extended exponential function, Extended logarithmic function
\end{IEEEkeywords}

\section{Introduction}
Various probability distributions, such as normal distribution, Poisson distribution, gamma distribution, and Bernoulli distribution, are formulated into the exponential families~\cite{barndorff14,brown86,mccullagh89} with sufficient statistics by virtue of the Fisher-Neyman factorization theorem~\cite{halmos49}. As a consequence of the additive structure of the exponential families, it is easy to estimate parameters, such as mean and variance, of probability distributions. Numerous applications of the exponential families are introduced in \cite{banerjee05,kulis12,murphy12,paul13,wainwright08}. For instance, \cite{banerjee05} introduce a mixture model with regular exponential families which has an equivalence with a subclass of  Bregman divergence~\cite{bregman67}. Though the exponential families have useful additive structure, the class of these is restricted, due to strong assumptions on cumulant function in terms of shape constraints of the distribution, such as analyticity, strict convexity, and steepness. Recently, the log-concave density estimation method is introduced~\cite{cule10,samworth18}. This method is a typical non-parametric estimation method with a simple coercivity shape constraint, and thus leads to relatively accurate density estimation results in a lower-dimensional space. See \cite{samworth18,saumard14} for more details and related applications. 

In this work, we are interested in a relaxation of the parameterized shape constraints of exponential dispersion model~\cite{jorgensen97}, i.e., natural exponential families with an additional dispersion parameter. Inspired from \cite{kass97}, we propose a relaxed exponential dispersion model which has  K continuously differentiable convex cumulant function of Legendre type (K-LED: Legendre exponential dispersion model with K cumulants). The proposed K-LED model is established through the parameterized log-concave density function based on Bregman divergence associated with a convex function of Legendre type (or Legendre). The main advantage of the proposed model is that, by the extended global optimum property of Bregman divergence~\cite{amari00, banerjee05},  the parameterized log-concave density function, which is developed via Bregman divergence associated with Legendre, becomes the $1$-LED model having the well-defined first cumulant (or the mean parameter space). In Section \ref{sec3}, we study in details on the construction of the $K$-LED model based on the parameterized log-concave density function. For more details on the various properties of  Bregman divergence, $f$-divergence, and various related equivalence in machine learning including classification, see \cite{dikmen15,reid10,reid11}. For the clustering (or segmentation) with Bregman divergence or generalized divergence, see \cite{banerjee05,lecellier10,nielsen09,paul13,teboulle07}. 

There are probability distributions having special conditions between mean and variance, such as quadratic variance function~\cite{morris82} and power variance function~\cite{bar-lev86,jorgensen97,tweedie84}. Let $\mu$ and $var(b)$ be mean and variance of observations $b$, then only six probability distributions (normal, Poisson, gamma, binomial, negative binomial, and generalized hyperbolic secant) of exponential dispersion models have the quadratic variance function~$var(b) =  a_1\mu^2 + a_2\mu +a_3$ where $a_i \in \R$ is a constant. See also \cite{pistone99}, for the generalized quadratic variance, known as finitely generated cumulants via a recurrence relation of polynomial between the first and the second cumulant. Although it is not a standard probability distribution having an analytic cumulant generating function, there is a relaxed (quasi-)probability distribution defined only by mean and variance; $p(b;\mu) = p_0(b)\exp(Q(b;\mu))$ with a quasi-likelihood function 
\begin{equation}\label{qlike}
Q(b;\mu) = -\int_{\mu}^b \frac{b-x}{\sigma^2V(x)}dx
\end{equation} 
where $\sigma^2>0$ is a dispersion parameter and $V(x)$ is a unit variance function satisfying mean-variance relation $var(b) = \sigma^2 V(\mu)$~\cite{mccullagh89,wedderburn74}.  Instead of mean-variance relation, by using a relation between the first and the second cumulant, the $2$-LED model is constructed. The equivalence between a subclass of quasi-likelihood function and the regular $2$-LED model satisfying the mean-variance function is studied in Section \ref{secQ}. See also \cite{kass97} for more details. A typical example of $2$-LED is Tweedie distribution~\cite{tweedie84} having {\it power variance function}:  
\begin{equation}\label{powerlaw}
var(b) =  \sigma^2 \mu^{2-\beta}
\end{equation} 
This distribution includes various probability distributions, such as normal distribution ($\beta=2$), Poisson distribution ($\beta=1$), compound Poisson-gamma distribution ($0<\beta<1$), gamma distribution ($\beta=0$), inverse Gaussian distribution ($\beta=-1$). Note that inverse Gaussian distribution is a  non-regular exponential dispersion model having a non-open canonical parameter space. Interestingly, on the boundary of the canonical parameter space, this distribution becomes Levy distribution which does not have the corresponding mean parameter space~\cite{barndorff14,jorgensen97}. Thus, the structure of Tweedie distribution is rather complicated. Besides, because of the analyticity of the cumulant generating function (or moment generating function) and the requirement of~\eqref{powerlaw} at the same time, the classic Tweedie distribution is not in exponential dispersion model when $\beta \in (1,2) \cup (2, +\infty)$~\cite{bar-lev86}. These strict constraints are relaxed in the proposed K-LED model with \eqref{powerlaw}.

Concerning $\beta$ of $\beta$-divergence (or $\beta$ in \eqref{powerlaw} of Tweedie distribution), it is not easy to directly use Tweedie distribution for the estimation of $\beta$ since it is not defined for all $\beta \in \R$. Recently, \cite{dikmen15} proposed an augmented exponential dispersion model (EDA). By an additional augmentation function, the domain of EDA is moved away from the boundary of the domain of the classic Tweedie distribution, and thus it is possible to estimate $\beta \in \R$ in a more natural way. However, the domain of EDA is limited to positive region, and thus the applicability of the model is reduced. The $\beta$-divergence with $\beta \in (1,2)$ has several interesting applications.  A typical one is that $\beta$-divergence with this region is used as robustified Kullback-Leibler divergence~\cite{basu98,eguchi01}. It gives a robust distance between two probability distributions. For instance, it was used for a robust spatial filter of the noisy EEG  data~\cite{samek13}. For more details on robustness of $\beta$-divergence, see \cite{basu98,cichocki10,eguchi01,woo17}. Moreover, in our previous works, this region is used for cutting-edge classification models; (1) H-Logitron~\cite{woo19} having high-order hinge loss with stabilizer. (2) The Bregman-Tweedie classification model~\cite{woo19b} which is developed by Bregman-Tweedie divergence (see also~\eqref{tweedieBregman} in Appendix). This classification model is an unbounded extended logistic loss function, including unhinge loss function~\cite{rooyen15}. Besides, the convex extended logistic loss function, which is between the logistic loss and the exponential loss, is an analytic convex function of Legendre type and thus can be used as a cumulant function of the K-LED model, which connect between Bernoulli distribution and Poisson distribution. The details are studied in Section \ref{berpoi}. Last but not least, the extended logistic loss function is composed of the extended elementary functions, that is, extended exponential function and extended logarithmic function. For more details on these functions and related applications in machine learning, see \cite{woo17,woo19,woo19b} and Appendix. 

The article is organized as follows. Section \ref{sec2} summarizes various properties of Legendre (or a convex function of Legendre type) and Bregman divergence associated with Legendre, which is essential ingredients in the following Sections. Section \ref{sec3} introduces the K-LED model, i.e., Legendre exponential dispersion model with K cumulants. This model is developed by  Bregman divergence associated with Legendre. The proposed Bregman-divergence-guided K-LED model inherently has the first cumulant, and thus it has the corresponding mean parameter space. For more details on the fundamental structure of exponential families, including exponential dispersion model, see~\cite{barndorff14,brown86,wainwright08}. Section \ref{secQ} studies the connection between the $2$-LED model and quasi-likelihood function based on mean-variance relation $var(b) = \sigma^2 V(\mu)$. Also, we introduce the $2$-LED model with power variance function~\eqref{powerlaw}, and the K-LED model, a cumulant function of which is the convex extended logistic loss function. We give our conclusions in Section \ref{sec6}.

\subsection{Notation}
Let $\inprod{a}{d} = \sum_{i=1}^n a_id_i$ where $a = (a_1,...,a_n) \in \R^n$ and $d = (d_1,...,d_n) \in \R^n$. $\R_+ = \{ x \in \R \;|\; x \ge 0 \}$, $\R_{++} = \{ x \in \R \;|\; x > 0 \}$, $\R_{-} = \{ x \in \R \;|\; x \le 0 \}$, and $\R_{--} = \{ x \in \R \;|\; x < 0 \}$. ${\mathbb Z}$ is a set of integer and ${\mathbb Z}_+ = \{0,1,2,... \}$. From \cite{woo17}, $\R$ is classified as
$$
\boxed{
\left\{
\begin{array}{l}
\R_e = \{ 2k/(2l+1) \;|\; k,l \in \mathbb{Z} \}\\ 
\R_o = \{ (2k+1)/(2l+1) \;|\; k,l \in \mathbb{Z} \}\\
\R_x = \R \setminus (\R_e \cup \R_o)
\end{array}
\right.
}
$$ Integration, multiplication, and division are performed component-wise. $\hbox{conv}{\cal B}$ is all convex combinations of the elements of a set ${\cal B}$. 

For a function $f$, $f \in C^K(\Theta)$ means that $f$ has continuous partial derivatives of $K$-th order on a convex set $\Theta \subseteq \R^n$. Let $f : \dom f \subseteq \R^n \rightarrow \R$ be a lower semicontinuous, convex, and proper function. Then the domain of $f$ is defined as
$\dom f = \{ x \in \R^n \;|\; f(x) < +\infty \}$.
This is known as the {\it effective domain}~\cite{roc70}. In this work, we always assume that $\dom f$ is a convex set, irrespective of convexity of $f$. Note that $int(\dom f)$ is the interior of $\dom f$ and $ri(\dom f)$ is the interior of $\dom f$ relative to its affine hull, the smallest affine set including $\dom f$. Hence, the relative interior $ri(\dom f)$ coincides with $int(\dom f)$ when the affine hull of $\dom f$ is $\R^n$. For this reason, we assume $ri(\dom f) = int(\dom f)$, unless otherwise stated. $cl(\dom f)$ is the closure of $\dom f$. $bd(\dom f) = cl(\dom f) \setminus int(\dom f)$ is the boundary of $\dom f$. As observed in \cite{hir96}, the convexity of $f$ can be extended to $\R^n$ by using the extended-valued real number system $\R_{+\infty} = \R\cup \{ +\infty \}$. That is, $f^e  : \R^n \rightarrow \R_{+\infty}$: $f^e(x) = \left\{\begin{array}{l} f(x) \quad x \in \Omega \\ +\infty \quad\; x \not\in \Omega  \end{array}\right.$
where $\Omega = \dom f$ or any other convex set for various purpose. If $f$ is not convex then we use the extended-valued real number system $\R_{\pm \infty} = \R \cup \{\pm\infty\}$. See \cite{woo19} for arithmetical operations in $\R_{\pm\infty}$. Let $\E[b]$ be the expectation of observations $b$. For simplicity, we use $\mu = \E[b]$.
\section{Preliminaries\label{sec2}}
This Section introduces some useful properties of a convex function of Legendre type, the corresponding Bregman divergence, and log-concave density functions. For more details on these, see \cite{amari16,barndorff14,bauschke97,hir96,roc70} and reference therein.

\subsection{A convex function of Legendre type}

\begin{theorem}\label{int}
Let $f: \dom f \rightarrow \R$ be lower semicontinuous, convex, and proper function on $\dom f \subseteq \R^n$. Then $f$ satisfies the following relation
\begin{equation}\label{increl}
int(\dom  f) \subseteq \dom \partial f \subseteq \dom  f
\end{equation}
where $\dom  f$ is a convex set and thus $int(\dom  f)$ is also convex~\cite[Th 6.2]{roc70}. Note that
$\dom \partial f = \{ x \in \dom f \;|\; \partial f(x) \not= \emptyset \}$
where $\partial f(x) = \{ x^* \;|\; f(z) \ge f(x) + \inprod{x^*}{z-x}, \forall z \}$ is a subgradient of $f$ at $x \in \dom f$.
\end{theorem}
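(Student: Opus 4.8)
The plan is to treat the two inclusions in \eqref{increl} separately, with the first one carrying essentially all of the work. The inclusion $\dom \p f \subseteq \dom f$ is immediate from the stated definition $\dom \p f = \{ x \in \dom f \;|\; \p f(x) \neq \emptyset \}$, so nothing is required there. For $int(\dom f) \subseteq \dom \p f$, I would fix an arbitrary $x_0 \in int(\dom f)$ and exhibit some $x^* \in \p f(x_0)$; by the definition of $\dom \p f$ this shows $x_0 \in \dom \p f$, and since $x_0$ was arbitrary the inclusion follows.

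To produce such an $x^*$, I would work in $\R^{n+1}$ with the epigraph $\mathrm{epi}\,f = \{ (x,t) \;|\; x \in \dom f,\ t \ge f(x) \}$, which is a nonempty convex set because $f$ is convex and proper. The first preparatory step is to observe that convexity together with $x_0 \in int(\dom f)$ forces $f$ to be bounded above on a neighborhood of $x_0$: choose a simplex inside $\dom f$ whose (open) interior contains $x_0$ and apply the convexity inequality to the vertex values, so $f$ is dominated by their maximum $M$ on that interior. Consequently $\mathrm{epi}\,f$ has nonempty interior (it contains the open set $S \times (M,\infty)$). The second step is the elementary remark that $(x_0, f(x_0))$ cannot belong to $int(\mathrm{epi}\,f)$, since every neighborhood of it contains points $(x_0, f(x_0)-\varepsilon) \notin \mathrm{epi}\,f$. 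Thus $(x_0, f(x_0))$ is a boundary point of a convex set with nonempty interior, and the supporting hyperplane theorem furnishes a nonzero $(a,\beta) \in \R^n \times \R$ with $\inprod{a}{x} + \beta t \ge \inprod{a}{x_0} + \beta f(x_0)$ for every $(x,t) \in \mathrm{epi}\,f$.

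The third step is to pin down the sign of $\beta$. Taking $x = x_0$ and letting $t \to +\infty$ forces $\beta \ge 0$. If $\beta = 0$, the supporting inequality reduces to $\inprod{a}{x - x_0} \ge 0$ for all $x \in \dom f$; but $x_0 \in int(\dom f)$ permits the choice $x = x_0 - \varepsilon a$ for small $\varepsilon > 0$, giving $-\varepsilon\norm{a}^2 \ge 0$, hence $a = 0$, contradicting $(a,\beta) \neq 0$. Therefore $\beta > 0$; dividing the inequality by $\beta$ and specializing to $t = f(x)$ yields $f(x) \ge f(x_0) + \inprod{-a/\beta}{x - x_0}$ for all $x$ (with the extended-value convention handling $x \notin \dom f$ trivially), so that $x^* := -a/\beta \in \p f(x_0)$, which completes the argument.

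I expect the only genuinely non-bookkeeping point to be the \emph{non-verticality} of the supporting hyperplane, i.e.\ excluding $\beta = 0$: this is precisely where the hypothesis $x_0 \in int(\dom f)$ (as opposed to merely $x_0 \in \dom f$) is used, and it also quietly depends on the local boundedness of convex functions on the interior of their domain, which underpins the nonemptiness of $int(\mathrm{epi}\,f)$. As a remark I would also sketch an alternative route that bypasses the epigraph: the one-sided directional derivative $f'(x_0;d) = \lim_{\tau \downarrow 0} \tau^{-1}\big(f(x_0+\tau d) - f(x_0)\big)$ exists by monotonicity of the difference quotients, is finite for every $d$ by interiority, and is sublinear in $d$ by convexity; a Hahn--Banach dominated extension then yields a linear functional $x^*$ with $\inprod{x^*}{d} \le f'(x_0;d)$ for all $d$, which is again equivalent to $x^* \in \p f(x_0)$. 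Either way, the lower semicontinuity assumption is not actually needed for \eqref{increl} — convexity, properness, and interiority suffice.
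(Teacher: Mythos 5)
Your argument is correct. Note that the paper does not actually prove this statement: it is presented as a standard preliminary quoted from Rockafellar (the relation \eqref{increl} is his Theorem 23.4, and the convexity of $\dom f$ and $int(\dom f)$ is his Theorem 6.2), so there is no in-paper proof to compare against. What you give is precisely the classical supporting-hyperplane argument behind that citation: the inclusion $\dom\partial f\subseteq\dom f$ is definitional, local boundedness above on a simplex gives $int(\mathrm{epi}\,f)\neq\emptyset$, the point $(x_0,f(x_0))$ lies on the boundary, and interiority of $x_0$ rules out a vertical supporting hyperplane, yielding a subgradient $-a/\beta$. Your closing remarks are also accurate and worth keeping: lower semicontinuity plays no role in \eqref{increl} (it matters elsewhere in the paper, e.g.\ for conjugacy and the Legendre-type structure, not here), and the directional-derivative/Hahn--Banach route is an equally valid alternative.
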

As noticed in \cite{roc70}, $\dom \partial f$ is not necessarily convex, though $\dom f$ and $int(\dom f)$ are convex. Now, we define a convex function of Legendre type~\cite{bauschke97,roc70}. 
\begin{definition}\label{legendredef}
Let $f: \dom f \rightarrow \R$ be lower semicontinuous, convex, and proper function on $\dom f \subseteq \R^n$. Then $f$ is a convex function of Legendre type (or Legendre), if the following conditions are satisfied.
\begin{itemize}
\item $int(\dom f)\not= \emptyset$ and $f \in C^1(int(\dom f))$
\item $f$ is strictly convex on $int(\dom f)$  
\item ({\it steepness}) $\forall x \in bd(\dom f)$ and $\forall y \in int(\dom f),$
\begin{equation}\label{steep}
\lim_{t \downarrow 0} \inprod{\nabla f(x + t(y-x))}{y-x} = -\infty
\end{equation}
\end{itemize}
For simplicity, let us denote a class of convex functions of Legendre type as
$$
\boxed{\mathbb{L}_n = \{ f: \dom f \subseteq \R^n \rightarrow \R \;|\; f \hbox{ is Legendre} \}}
$$
\end{definition}
Here, \eqref{steep} is known as the steepness condition in statistics~\cite{barndorff14,brown86}. The following Theorem~\cite{bauschke97,roc70} is useful while we characterize Legendre exponential dispersion model with $K$ cumulants (K-LED).
\begin{theorem}\label{isoL}
$f \in \L_n$ if and only if $f^* \in \L_n$, where  $f^*(x) = \sup_{t} \inprod{x}{t} - f(t)$ is the conjugate function of $f$.  The corresponding gradient 
\begin{equation}\label{toISO}
\nabla f : int(\dom  f) \rightarrow int(\dom  f^*) : x \rightarrow \nabla f(x)
\end{equation}
is a topological isomorphism with inverse mapping 
$(\nabla f)^{-1} = \nabla f^*$. 
\end{theorem}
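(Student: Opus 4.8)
The plan is to derive the statement from the self-duality of the two ingredients that make up ``Legendre type'': \emph{essential smoothness} (with $C := int(\dom f) \neq \emptyset$, $f$ differentiable on $C$, and $\|\nabla f(x)\| \to \infty$ as $x$ approaches $bd(\dom f)$) and \emph{essential strict convexity} ($f$ strictly convex on $C$). First I would show that the steepness condition \eqref{steep} is equivalent to the statement that $\partial f$ carries no subgradient at any boundary point of $\dom f$: if $x^* \in \partial f(x_0)$ with $x_0 \in bd(\dom f)$, then for any $y \in C$ the one-dimensional convex function $t \mapsto f(x_0 + t(y-x_0))$ has right derivative at $0$ bounded below by $\inprod{x^*}{y-x_0} > -\infty$, whereas \eqref{steep} forces that right derivative to equal $-\infty$. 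Combined with the inclusion $int(\dom f) \subseteq \dom \partial f$ from Theorem \ref{int} and the uniqueness of subgradients wherever $f$ is differentiable, this gives $\dom \partial f = C$, an open convex set on which $\partial f = \{\nabla f\}$.

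Next I would record two purely convex-analytic facts, both obtained from the subgradient inversion $x^* \in \partial f(x) \iff x \in \partial f^*(x^*) \iff f(x)+f^*(x^*) = \inprod{x}{x^*}$. First, strict convexity of $f$ on $C$ makes $\nabla f$ injective on $C$: if $\nabla f(x) = \nabla f(z) = x^*$ for $x \neq z$ in $C$, then the segment $[x,z]$ lies in the convex set $\partial f^*(x^*)$, so $f$ is affine along $[x,z]$ by the Fenchel equality, contradicting strict convexity. Second, by the Legendre-duality theorems of \cite{roc70,bauschke97}, $f$ is essentially smooth iff $f^*$ is essentially strictly convex, and $f$ is essentially strictly convex iff $f^*$ is essentially smooth; hence $f \in \L_n$ forces $f^* \in \L_n$, and the converse follows by symmetry since $f^{**} = f$. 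This already proves the ``if and only if'' assertion.

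Finally, applying the first paragraph to $f^*$ yields $\dom \partial f^* = int(\dom f^*)$ with $\partial f^* = \{\nabla f^*\}$ there. Using the inversion once more, $x^* = \nabla f(x)$ for some $x \in C$ iff $x \in \partial f^*(x^*)$, i.e. iff $x^* \in int(\dom f^*)$ and $x = \nabla f^*(x^*)$; this shows simultaneously that $\nabla f$ maps $C$ onto $int(\dom f^*)$ and that $\nabla f^*$ is a two-sided inverse of it. Continuity of both maps comes for free: a finite convex function differentiable on an open set is automatically $C^1$ there, so $\nabla f : C \to int(\dom f^*)$ and $\nabla f^* : int(\dom f^*) \to C$ are continuous, and a continuous bijection with continuous inverse is a topological isomorphism. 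The step I expect to be the real obstacle is the second fact in the middle paragraph --- converting essential strict convexity of $f$ into essential smoothness (steepness) of $f^*$, equivalently establishing that $\nabla f(C)$ is exactly $int(\dom f^*)$ rather than merely a subset of it; this is where one must invoke the Legendre-duality machinery instead of a routine estimate.
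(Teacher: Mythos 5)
The paper does not actually prove Theorem \ref{isoL}; it states the result and defers entirely to \cite{roc70} (Theorem 26.5) and \cite{bauschke97}. Your outline is a correct reconstruction of that standard argument, and each step you do carry out is sound: the reduction of the steepness condition \eqref{steep} to the absence of boundary subgradients (via the lower bound $g'_+(0)\ge\inprod{x^*}{y-x_0}$ for the one-dimensional restriction), hence $\dom\partial f=int(\dom f)$ with $\partial f=\{\nabla f\}$ there; injectivity of $\nabla f$ from strict convexity via the Fenchel equality forcing affineness on $[x,z]$; the bijection and inverse relation from the subgradient inversion $x^*\in\partial f(x)\iff x\in\partial f^*(x^*)$; and continuity of both gradients from the fact that a finite convex function differentiable on an open set is $C^1$ there. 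You are also right to identify the conjugate duality between essential smoothness and essential strict convexity (Rockafellar's Theorem 26.3) as the genuinely nontrivial step; you cite it rather than prove it, but that is precisely the same content the paper itself outsources, so your proposal is no less complete than the paper's treatment. The only cosmetic remark is that your parenthetical gloss of essential smoothness via $\norm{\nabla f(x)}\to\infty$ is a different (though equivalent) formulation from the directional condition \eqref{steep} you actually use; since your argument never relies on the norm-blowup form, nothing is lost, but if you wanted a self-contained proof you would also need that equivalence (Rockafellar's Lemma 26.2) when transferring Legendre-ness between $f$ and $f^*$.
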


The coercivity of Legendre is useful while we characterize Tweedie distribution~\cite{bar-lev86,jorgensen97,tweedie84} and log-concave density functions~\cite{cule10}. 
\begin{theorem}\label{Thcoercivity}
Let $f \in \L_n$, then the followings are equivalent:
\begin{enumerate}
\item $f$ is coercive, i.e., $\lim_{\norm{x}\rightarrow +\infty} f(x) = +\infty$
\item There exists $(a_1,a_2) \in \R_{++} \times \R$ such that $f(x) \ge a_1\norm{x} + a_2$, for all $x \in \R^n$
\item $0 \in int(\dom f^*)$ 
\item $\int_{\R^n} \exp(-f(x)) dx < +\infty$ 
\end{enumerate}
\end{theorem}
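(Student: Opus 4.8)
The plan is to prove the chain $(1)\Rightarrow(2)\Rightarrow(3)\Rightarrow(4)\Rightarrow(1)$, since each implication is either standard convex analysis or a direct estimate. For $(1)\Rightarrow(2)$ I would use that a coercive, lower semicontinuous, proper convex function on $\R^n$ has all its sublevel sets $\{x : f(x)\le c\}$ bounded (hence compact); picking $c$ above $\inf f$, the set $S=\{x : f(x)\le c\}$ is a compact convex body. Then for any $x\notin S$, convexity along the segment from a fixed interior point $x_0\in \mathrm{int}(\dom f)$ to $x$ forces $f$ to grow at least linearly: writing the boundary-crossing point of the segment with $\partial S$ and using the standard three-slope inequality gives $f(x)\ge a_1\norm{x}+a_2$ for suitable $a_1>0$, $a_2\in\R$, after enlarging constants to absorb the behaviour on the compact set $S$.

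For $(2)\Rightarrow(3)$ I would compute directly with the conjugate: from $f(x)\ge a_1\norm{x}+a_2$ we get, for every $y$ with $\norm{y}\le a_1$,
\[
f^*(y)=\sup_x\big(\inprod{y}{x}-f(x)\big)\le \sup_x\big(\norm{y}\,\norm{x}-a_1\norm{x}-a_2\big)=-a_2<+\infty,
\]
so the ball of radius $a_1$ centred at $0$ lies in $\dom f^*$, whence $0\in\mathrm{int}(\dom f^*)$. Conversely, $(3)\Rightarrow(2)$ (which I would also verify, so that $(3)\Rightarrow(1)$ closes) follows because $0\in\mathrm{int}(\dom f^*)$ means $f^*$ is bounded above, say by $M$, on some ball $\norm{y}\le\rho$; then $f(x)=f^{**}(x)\ge \sup_{\norm{y}\le\rho}\inprod{y}{x}-M=\rho\norm{x}-M$, using that $f$ is closed convex proper so $f^{**}=f$. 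This already yields $(3)\Rightarrow(1)$ since $\rho\norm{x}-M\to+\infty$.

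For $(2)\Rightarrow(4)$ the estimate is immediate:
\[
\int_{\R^n}\exp(-f(x))\,dx\le \int_{\R^n}\exp(-a_1\norm{x}-a_2)\,dx=e^{-a_2}\int_{\R^n}\exp(-a_1\norm{x})\,dx<+\infty,
\]
the last integral being finite in any dimension (pass to polar coordinates: $\int_0^\infty r^{n-1}e^{-a_1 r}\,dr<\infty$). The only implication needing a little care is $(4)\Rightarrow(1)$: here I would argue by contraposition. If $f$ is not coercive, then since $f\in\L_n$ and $f$ is closed convex proper, the recession function $f0^+$ satisfies $f0^+(d)\le 0$ for some direction $d\neq 0$ (non-coercivity means some sublevel set is unbounded, equivalently the recession cone of a sublevel set is nontrivial). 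Then along any line in direction $d$, $f(x_0+td)$ is nonincreasing (or at least bounded above) as $t\to+\infty$, so $f$ is bounded above by a constant $C$ on a half-cylinder $\{x_0+td+v : t\ge 0,\ \norm{v}\le \delta\}$ of infinite volume — using convexity to get the bound on a genuine cylinder rather than just a ray — and hence $\int\exp(-f)\ge e^{-C}\cdot(\text{infinite volume})=+\infty$.

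I expect the main obstacle to be the implication $(4)\Rightarrow(1)$, specifically upgrading "$f$ is bounded above along a ray" to "$f$ is bounded above on a set of infinite Lebesgue measure." A ray has measure zero, so the naive argument fails; the fix is to exploit convexity and the fact that $\mathrm{int}(\dom f)\neq\emptyset$ (guaranteed by $f\in\L_n$) to thicken the ray into a cylinder of positive cross-sectional measure on which $f$ stays bounded, via the inequality $f(\lambda u+(1-\lambda)w)\le \lambda f(u)+(1-\lambda)f(w)$ with $u$ running along the ray and $w$ in a small ball inside $\mathrm{int}(\dom f)$. Everywhere the Legendre hypothesis is used only lightly — chiefly through $\mathrm{int}(\dom f)\neq\emptyset$ and through closedness (so that $f^{**}=f$, which Theorem~\ref{isoL} and the standing assumptions already supply); the strict convexity and steepness in Definition~\ref{legendredef} are not needed for this equivalence.
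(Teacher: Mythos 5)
The paper itself contains no proof of Theorem~\ref{Thcoercivity}; it is dispatched with a citation to \cite[Th.~6.1]{barndorff14} and \cite[Prop.~14.16]{bauschke11}. Your self-contained cycle $(1)\Rightarrow(2)\Rightarrow(3)\Rightarrow(4)\Rightarrow(1)$ (plus the separate check $(3)\Rightarrow(2)$) is correct and is essentially the standard convex-analytic argument behind the cited result, so what your write-up buys over the paper is a proof the reader can actually verify. The genuinely delicate step, $(4)\Rightarrow(1)$, you have both identified and resolved properly: non-coercivity gives a nonzero recession direction $d$ with $f0^+(d)\le 0$, hence $f(w+td)\le f(w)$ for every $w\in\dom f$ and $t\ge 0$, and applying this with $w$ ranging over a small ball $B(x_0,\delta)\subseteq int(\dom f)$ on which $f$ is bounded above by some $C$ (local boundedness of a convex function on the interior of its domain) bounds $f$ by $C$ on a half-cylinder of infinite Lebesgue measure, so $\int\exp(-f)\ge e^{-C}\cdot\infty$. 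This is also the one place the Legendre hypothesis is really used, via $int(\dom f)\ne\emptyset$: for an $f$ supported on a lower-dimensional set the integral in (4) vanishes while (1) can fail, so the thickening of the ray is not cosmetic. One small polish for $(1)\Rightarrow(2)$: the exit point of the segment through $\partial S$ may lie on $bd(\dom f)$, where $f$ need not equal $c$, so the three-slope bound there requires a case split; it is cleaner to fix a minimizer $x_0$, a radius $R$ with $S\subseteq B(x_0,R)$, and compare $x$ with the point $y$ at distance $R+1$ from $x_0$ on the segment $[x_0,x]$, for which $f(y)>c$ automatically and the convexity inequality $f(x)\ge t f(y)-(t-1)f(x_0)$ with $t=\norm{x-x_0}/(R+1)$ yields the linear minorant at once. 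Your closing observation is also accurate: strict convexity and steepness play no role here; only properness, lower semicontinuity (so that $f^{**}=f$ in $(3)\Rightarrow(2)$), and $int(\dom f)\ne\emptyset$ are needed.
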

For the proof of Theorem \ref{Thcoercivity}, see \cite[Th. 6.1]{barndorff14} and \cite[Prop. 14.16]{bauschke11}. 
\begin{definition}\label{logconv}
Let $f \in \L_n$ and $\int_{\R^n} g(x) \nu(dx) = 1$, where $g(x) = \exp(-f(x))$ and $\nu(dx)$ is an appropriate continuous Lebesgue (or discrete  counting) measure on $\R^n$. Then $g$ is a log-concave (probability) density function.
\end{definition}
See \cite{barndorff14,bauschke11, brown86, hir96, roc70} for other useful properties of convex functions and their applications in statistics. 

\subsection{Bregman divergence associated with Legendre 
}
Consider Bregman divergence associated with~$f \in \L_n$:
\begin{equation}\label{bregx}
D_{ f}(x|y) =  f(x) -  f(y) - \inprod{x-y}{\nabla  f(y)} 
\end{equation}
where $(x,y) \in \dom f  \times int(\dom f)$ and $D_{ f}(x|y) \in  \R_+$. It also is formulated with the conjugate function $f^* \in \L_n$ as  $D_{ f}(x|y) =  f(x) +  f^*(\nabla f(y)) - \inprod{x}{\nabla f(y)}$. As observed in information geometry~\cite{amari00,amari16}, Bregman divergence~\eqref{bregx} is related to the canonical divergence. Actually, it includes various divergences; (1) Itakura-Saito divergence $D_{ f}(x|y) = \left(\frac{x}{y}\right)-\log\left(\frac{x}{y}\right)-1$ with $ f(x) = -\log x$. This divergence is induced from gamma distribution~\cite{fevotte09,woob16}. (2) Generalized Kullback-Leibler divergence (I-divergence) $D_{ f}(x|y) = x\log\left(\frac{x}{y}\right)-(x-y)$ with $ f(x) = x\log x$. This generalized distance is induced from Poisson distribution~\cite{dias10b,teboulle07}. (3) $norm^2$-distance $D_{ f}(x|y) = \frac{1}{2}(x-y)^2$ with $ f(x) = \frac{1}{2}x^2$. This distance can be easily derived from normal distribution. 

In addition, we summarize several useful properties of Bregman divergence associated with Legendre. See \cite{bauschke97,teboulle07} for more details. 
\begin{theorem}\label{legendreth}
Let $f \in \L_n$ and $f^* \in \L_n$. Then Bregman divergence associated with $f$ satisfies the following properties.
\begin{enumerate}
\item $D_{f}(x|y)$ is strictly convex with respect to $x$ on $int(\dom f)$.
\item $D_{f}(x|y)$ is coercive with respect to $x$, for all $y \in int(\dom  f)$. 
\item $D_{f}(x|y)$ is coercive with respect to $y$, for all $x \in int(\dom  f)$ if and only if $\dom  f^*$ is open.
\item $D_{f}(x | y)=0$ if and only if $x=y$\;\; where $y \in int(\dom f)$
\item For all $x,y \in int(\dom f)$, $D_{f}(x|y) = D_{f^*}(\nabla f(y)|\nabla f(x))$
\item (Global optimum property~\cite{banerjee05}) Let $x \in \dom f$ and $\E(x) \in int (\dom f)$, then for all $(x,y) \in \dom f \times int(\dom f)$, we have $\E(D_{f}(x | y)) \ge  \E(D_{f}(x|\E(x)))$. 
\end{enumerate}
\end{theorem}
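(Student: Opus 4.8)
The plan is to handle the six items with three tools: strict convexity of $f$ on $int(\dom f)$ for the shape statements (1) and (4), the coercivity characterization of Theorem~\ref{Thcoercivity} applied to affine perturbations of $f$ and $f^*$ for (2) and (3), and the Fenchel--Young equality together with the isomorphism $(\nabla f)^{-1}=\nabla f^*$ of Theorem~\ref{isoL} for (5); item (6) then follows from (4) and linearity of the expectation. For (1), note that $x\mapsto D_f(x|y)$ equals $f$ plus the affine function $-f(y)-\inprod{x-y}{\nabla f(y)}$, and hence inherits strict convexity on $int(\dom f)$ directly from the definition of $\L_n$. For (4), the ``if'' direction is immediate by substitution; for ``only if'', given $x\in\dom f$ (possibly on $bd(\dom f)$) and $y\in int(\dom f)$ with $x\neq y$, I would restrict to the segment $x_t=(1-t)y+tx$, which lies in $int(\dom f)$ for $t\in(0,1)$ by \cite[Th.~6.1]{roc70}, set $g(t)=f(x_t)$, and observe that $D_f(x|y)=g(1)-g(0)-g'(0^+)$ with $g'(0^+)=\inprod{\nabla f(y)}{x-y}$ finite since $y\in int(\dom f)$; strict convexity of $f$ makes $g'$ strictly increasing on $(0,1)$, so $g(1)-g(0)=\int_0^1 g'(t)\,dt>g'(0^+)$, i.e.\ $D_f(x|y)>0$.

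For (5), I would first use the Fenchel--Young equality $f(y)+f^*(\nabla f(y))=\inprod{y}{\nabla f(y)}$, valid for $y\in int(\dom f)$ since $f$ is differentiable there, to rewrite $D_f(x|y)=f(x)+f^*(\nabla f(y))-\inprod{x}{\nabla f(y)}$; then I would expand $D_{f^*}(\nabla f(y)|\nabla f(x))$ and substitute $\nabla f^*(\nabla f(x))=x$ and $f^*(\nabla f(x))=\inprod{x}{\nabla f(x)}-f(x)$ from Theorem~\ref{isoL}. The two expressions coincide after cancellation. For (2), fix $y\in int(\dom f)$ and put $h(x)=D_f(x|y)$, which is $f$ plus an affine term; an affine perturbation of a Legendre function is Legendre (strict convexity, $C^1$-smoothness, and the steepness condition~\eqref{steep} are all preserved), and $\dom h^*=\dom f^*-\nabla f(y)$, so $0\in int(\dom h^*)$ is equivalent to $\nabla f(y)\in int(\dom f^*)$, which holds for every $y\in int(\dom f)$ by the topological isomorphism of Theorem~\ref{isoL}. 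The equivalence of items~1 and~3 in Theorem~\ref{Thcoercivity} then gives coercivity of $D_f(\cdot|y)$.

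The delicate item is (3). Using (5), write $D_f(x|y)=f(x)+\phi(\nabla f(y))$ with $\phi(z)=f^*(z)-\inprod{x}{z}$, where, as $y$ ranges over $int(\dom f)$, $z=\nabla f(y)$ ranges over $int(\dom f^*)$ and $\nabla f$ is a homeomorphism between the two sets. Here $\phi$ is Legendre (affine perturbation of $f^*$) with $\nabla\phi(\nabla f(x))=\nabla f^*(\nabla f(x))-x=0$, so it has an interior minimizer, and $\phi^*(w)=f(w+x)$ with $0\in int(\dom\phi^*)=int(\dom f-x)$ because $x\in int(\dom f)$; hence by Theorem~\ref{Thcoercivity} $\phi$ is coercive, $\phi(z)\to+\infty$ as $\norm{z}\to+\infty$. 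If $\dom f^*$ is open, then $\dom f^*=int(\dom f^*)$, so any sequence $z_k$ escaping every compact subset of $\dom f^*$ either has $\norm{z_k}\to+\infty$ or approaches $bd(\dom f^*)$, where $\phi\equiv+\infty$; in the latter case lower semicontinuity of $\phi$ forces $\phi(z_k)\to+\infty$. Transferring back through the homeomorphism $(\nabla f)^{-1}=\nabla f^*$ shows $D_f(x|\cdot)$ is coercive on $int(\dom f)$. Conversely, if $\dom f^*$ is not open, pick $z_0\in bd(\dom f^*)\cap\dom f^*$ and approach it along a segment from $int(\dom f^*)$; convexity and lower semicontinuity make $\phi$ continuous along that segment, so $\phi(z_k)\to\phi(z_0)<+\infty$, while $y_k=\nabla f^*(z_k)$ leaves every compact subset of $int(\dom f)$ (a limit point $y^*\in int(\dom f)$ would force $z_0=\nabla f(y^*)\in int(\dom f^*)$, a contradiction). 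Thus $D_f(x|y_k)=f(x)+\phi(z_k)$ stays bounded and $D_f(x|\cdot)$ is not coercive. I expect this bounded-escape analysis --- resolved by lower semicontinuity exactly when $\dom f^*$ is open and obstructed precisely when it is not --- to be the main difficulty; the remaining steps are routine conjugate bookkeeping.

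Finally, for (6), linearity of the expectation (with $f(y)$ and $\nabla f(y)$ constant) gives $\E(D_f(x|y))-\E(D_f(x|\E(x)))=f(\E(x))-f(y)-\inprod{\E(x)-y}{\nabla f(y)}=D_f(\E(x)|y)$, which is nonnegative because $\E(x)\in int(\dom f)$ by hypothesis and Bregman divergence associated with $f\in\L_n$ takes values in $\R_+$; equality holds if and only if $y=\E(x)$ by item~(4).
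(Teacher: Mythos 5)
The paper itself gives no proof of Theorem~\ref{legendreth}; it simply points the reader to \cite{bauschke97,teboulle07}. Your write-up supplies the standard arguments from those sources, and it is essentially correct: (1), (5), and (6) are the routine affine-perturbation, Fenchel--Young, and linearity-of-expectation computations; (2) correctly reduces to the equivalence of items~1 and~3 of Theorem~\ref{Thcoercivity} via $\dom h^*=\dom f^*-\nabla f(y)$ and $\nabla f(y)\in int(\dom f^*)$; and your one-dimensional restriction argument for (4) correctly handles the case $x\in bd(\dom f)$, which the naive ``strict convexity gives strict supporting-hyperplane inequality'' argument would miss.

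The one place that needs tightening is the converse direction of (3). Coercivity in the sense of Theorem~\ref{Thcoercivity} means $D_f(x|y)\to+\infty$ as $\norm{y}\to+\infty$, so to refute it you must produce a sequence with $\norm{y_k}\to+\infty$ and $D_f(x|y_k)$ bounded. Your argument only shows that $y_k=\nabla f^*(z_k)$ escapes every compact subset of $int(\dom f)$, which is compatible with $y_k$ converging to a finite point of $bd(\dom f)$ --- and that would not contradict coercivity. The fix is immediate from the hypotheses you already have: you approach $z_0\in\dom f^*\setminus int(\dom f^*)$ along a segment from $int(\dom f^*)$, which is exactly the configuration of the steepness condition~\eqref{steep} applied to $f^*\in\L_n$; that condition forces $\inprod{\nabla f^*(z_k)}{w-z_0}\to-\infty$ along the segment and hence $\norm{y_k}=\norm{\nabla f^*(z_k)}\to+\infty$. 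With that one line added, the converse is complete, and the forward direction (coercivity of the affine perturbation $\phi$ of $f^*$ plus lower semicontinuity at boundary points lying outside the open set $\dom f^*$) is sound as written.
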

The global optimum property in Theorem \ref{legendreth} (6) is satisfied, irrespective of the convexity of Bregman divergence in terms of second variable. However, if Bregman divergence has an additional regularization term on the second variable, this property does not satisfied anymore. See \cite{lecellier10,paul13,rudin92,woo17} for more details on the regularized Bregman divergence and its applications in image processing. The following canonical divergence (reformulated Bregman divergence associated with Legendre) is helpful for the characterization of Bregman-divergence-based probability distribution and its relation with the K-LED model.
\begin{theorem}[Extended global optimum property~\cite{amari00}]
\label{canTH}
Let $f \in \L_n$, $f^* \in \L_n$, and $x \in \dom f$. Then the following canonical divergence
\begin{equation}\label{canD}
\boxed{
d_f(x;\theta) \define  D_f(x|\nabla f^*(\theta)) = f(x) + f^*(\theta) -\inprod{x}{\theta}
}
\end{equation}
is strictly convex with respect to the canonical parameter $\theta \in int(\dom f^*)$. Consider the minimization problem:
$$ 
\hat{\theta} = \argmin_{\theta \in int(dom f^*)} d_f(x;\theta)
$$ 
The solution of the above minimization problem exists within an extended-valued real number system~$\R_{\pm\infty}$
$$
\hat{\theta} = \left\{\begin{array}{l} 
\nabla f(x) \quad \hbox{ if } x \in int(\dom f)\\
\pm\infty \qquad \hbox{ if } x \in \dom f \setminus int(\dom f) 
\end{array}\right.
$$
Additionally, the global optimum property in Theorem \ref{legendreth} (6) is extended to $\E[x]\in\dom f$.
\end{theorem}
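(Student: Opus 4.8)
The plan is to build the statement on top of the isomorphism theorem (Theorem \ref{isoL}) and the basic properties of Bregman divergence associated with Legendre (Theorem \ref{legendreth}). First I would verify the rewriting $d_f(x;\theta) = D_f(x\,|\,\nabla f^*(\theta))$. Since $f^* \in \L_n$ by Theorem \ref{isoL}, for $\theta \in int(\dom f^*)$ we have $\nabla f^*(\theta) \in int(\dom f)$, so $D_f(x\,|\,\nabla f^*(\theta))$ is well-defined for $x \in \dom f$. Expanding \eqref{bregx} and using the conjugate identity $f(\nabla f^*(\theta)) + f^*(\theta) = \inprod{\nabla f^*(\theta)}{\theta}$ together with $\nabla f(\nabla f^*(\theta)) = \theta$ (again Theorem \ref{isoL}) collapses $D_f(x\,|\,\nabla f^*(\theta))$ to the symmetric Fenchel form $f(x) + f^*(\theta) - \inprod{x}{\theta}$, which is \eqref{canD}.

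Next I would establish strict convexity in $\theta$. For fixed $x$, the map $\theta \mapsto f(x) + f^*(\theta) - \inprod{x}{\theta}$ differs from $f^*$ by an affine term, and $f^*$ is strictly convex on $int(\dom f^*)$ because $f^* \in \L_n$; hence $d_f(x;\cdot)$ is strictly convex on $int(\dom f^*)$. This immediately gives uniqueness of any minimizer. For existence and its location, I would split on whether $x$ lies in the interior. If $x \in int(\dom f)$, then $\nabla f(x) \in int(\dom f^*)$ by \eqref{toISO}, and the stationarity condition $\nabla_\theta d_f(x;\theta) = \nabla f^*(\theta) - x = 0$ is solved exactly by $\theta = \nabla f(x)$ (using $(\nabla f^*)^{-1} = \nabla f$); strict convexity makes this the global minimizer, and the minimum value equals $D_f(x\,|\,x) = 0$ by Theorem \ref{legendreth} (4). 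If instead $x \in \dom f \setminus int(\dom f) = bd(\dom f)\cap\dom f$, the stationarity equation $\nabla f^*(\theta) = x$ has no solution in $int(\dom f^*)$ since $\nabla f^*$ maps onto $int(\dom f)$ only; the infimum is then approached as $\theta$ runs to the boundary of $\dom f^*$, which I would phrase as $\hat\theta = \pm\infty$ in $\R_{\pm\infty}$, i.e., the minimizing sequence $\theta_k = \nabla f(x_k)$ for $x_k \to x$, $x_k \in int(\dom f)$, escapes every compact subset of $int(\dom f^*)$. The steepness condition \eqref{steep} for $f$ (equivalently, the openness/boundary behavior captured by Legendre-ness of $f^*$) is precisely what forces this escape; I expect this boundary case to be the main obstacle, since one must argue carefully that no interior minimizer can exist and that the formal limit is the correct interpretation.

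Finally, for the extension of the global optimum property to $\E[x] \in \dom f$: given $x \in \dom f$ (random) with $\E[x] \in \dom f$, I would apply Jensen's inequality to the convex function $\theta \mapsto d_f(x;\theta)$ after taking expectations in $x$. Concretely, write $\E_x[d_f(x;\theta)] = \E_x[f(x)] + f^*(\theta) - \inprod{\E_x[x]}{\theta}$, which as a function of $\theta$ is again $f^*$ plus an affine term determined by $\E_x[x]$; minimizing over $\theta$ by the argument just given yields the minimizer $\hat\theta = \nabla f(\E_x[x])$ when $\E_x[x] \in int(\dom f)$, and the boundary value $\pm\infty$ otherwise. Substituting back gives $\E_x[d_f(x;\theta)] \ge \E_x[d_f(x;\hat\theta)] = \E_x[D_f(x\,|\,\E_x[x])]$ in the interior case, recovering and extending Theorem \ref{legendreth} (6). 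The only subtlety is ensuring the expectations are finite or at least well-defined in $\R_{+\infty}$, which follows from $x,\E[x] \in \dom f$ and lower semicontinuity of $f$; this bookkeeping, rather than any deep step, is what remains.
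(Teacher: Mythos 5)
Your proposal follows essentially the same route as the paper: the interior case is settled by the stationarity condition $\nabla f^*(\theta)=x$ via the gradient isomorphism, the boundary case is handled by a sequence $x_m\to x$ from $int(\dom f)$ whose images $\nabla f(x_m)$ escape to $\pm\infty$ by steepness, and the extension of the global optimum property rests on the same decomposition $\E[d_f(x;\theta)] = d_f(\E[x];\theta) + \E[f(x)] - f(\E[x])$. You are somewhat more explicit than the paper about verifying the canonical form, the strict convexity in $\theta$, and the integrability bookkeeping, but no new idea is introduced.
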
 
\begin{proof}
Let $x \in int(\dom f)$ then it is trivial that $\hat{\theta} = \nabla f(x)$. Consider $x \in \dom f \setminus int(\dom f)\; ( \not= \emptyset)$. Let $\hat{x}_m = (1-m^{-1})x + m^{-1}x_m \in int(\dom f)$ where $x_m \in int(\dom f)$ and $x_m \rightarrow x$ as $m \rightarrow +\infty$. Then $\hat{\theta}_m = \nabla f(\hat{x}_m)$. Thus, as $m \rightarrow \infty$, $\hat{x}_{\infty} = x   \not\in int(\dom f)$ and $\hat{\theta}_{\infty}= \hat{\theta} = +\infty$ (or $-\infty$). Regarding the global optimum property, from $\E(d_f(x;\theta)) = d_f(\E(x);\theta) + \E(f(x)) - f(\E(x))$, we have  $\theta_f =\argmin_{\theta}\E(d_f(x;\theta))$ where $\theta_f = \nabla f (\E(x))$ for all $\E(x) \in int(\dom f)$ and $\theta_f = \pm\infty$ for all $\E(x) \in \dom f \setminus int(\dom f)$.
\end{proof}
Theorem \ref{canTH} is useful while we analyze Tweedie distribution, such as zero-inflated compound Poisson-gamma distribution ($\beta \in (0,1)$) and inverse Gaussian distribution ($\beta=-1$). The following example shows that it is possible to build up a parameterized log-concave density function with a mean parameter space which is induced from Bregman divergence associated with Legendre. 
\begin{example}[Bregman-divergence-guided log-concave density function]\label{ex8}
Assume that observations $b \in \dom f$, $f \in \L_n$, $\Omega = int(\dom f)$, $\Omega^c = \dom f \setminus \Omega$, and $\Omega^*=\dom f^* = int(\dom f^*)$. From Theorem \ref{legendreth} (2), it is easy to check the coercivity of Bregman divergence associated with Legendre $d_{f}(b;\theta)$ for all $\theta \in \dom f^*$. Hence, we have the corresponding log-concave density function (see Definition \ref{logconv}).
\begin{equation}\label{logC}
p_{f}(b;\theta) = \exp(- d_{f}(b;\theta)) p_0(b) = \exp(\inprod{b}{\theta} - f^*(\theta)) p_1(b)
\end{equation}
where $p_1(b) = p_0(b)\exp(-f(b))$ is a base measure with an appropriate $p_0(b)$ satisfying $\int_{\cal B} p_{f}(b;\theta) \nu(db) = 1$. Consider the corresponding likelihood function $\ell(\theta;b_1,...,b_M) = \prod_{i=1}^M p_f(b_i;\theta)$ and a minimization problem with the negative log-likelihood function:
\begin{eqnarray}\label{minX}
\hat{\theta}_{avg} &=& \argmin_{\theta \in dom f^*} \;\sum_{i=1}^M d_f(b_i;\theta) = M d_f(b_{avg};\theta) + h(b)
\end{eqnarray}
where $b_{avg} = \frac{1}{M}\sum_{i=1}^M b_i \in \dom f$ and $h(b) = \sum_{i=1}^M f(b_i) - M f(b_{avg})$. 
Due to Theorem \ref{canTH}, the solution of \eqref{minX} becomes
$$
\hat{\theta}_{avg} = \left\{\begin{array}{l} 
\nabla f(b_{avg}) \quad \hbox{ if } b_{avg} \in \Omega\\
\pm\infty \qquad\quad \hbox{ if } b_{avg} \in \Omega^c 
\end{array}\right.
$$
Since $f \in \L_n$, we have a bijective map
$$
\nabla f :  \Omega \cup \Omega^c \rightarrow \Omega^* \cup \{\pm\infty \}   
$$  
where $\dom f = \Omega \cup \Omega^c$ is the mean parameter space and $\Omega^*$ is the canonical parameter space. See also \cite{amari00}. In fact, \eqref{logC} becomes a natural exponential family with the mean parameter space $\dom f$ if $f^*$ is a cumulant function.
\end{example}

\subsection{$\beta$-divergence and quasi-likelihood function with power variance function\label{betasec}}
Compared to Bregman divergence associated with Legendre, $\beta$-divergence~\cite{basu98,eguchi01} is less structured and tightly connected to quasi-likelihood function~\eqref{qlike} with power variance function~\eqref{powerlaw}. Formally, $\beta$-divergence $D_{\beta} : \Omega_L \times \Omega_R \rightarrow \R_+$ is defined as
\begin{equation}\label{betaDiv}
D_{\beta}(b|u) = \int_u^b x^{\beta-2}(b-x)dx = 
\left\{\begin{array}{l} 
\left(\frac{b}{u}\right) - \ln\left(\frac{b}{u}\right) -  1 , \hskip 2.8cm \hbox{ if } \beta = 0\\
b \ln\left(\frac{b}{u}\right) - (b-u), \hskip 2.7cm \hbox{ if } \beta = 1\\
\frac{b}{\beta-1}(b^{\beta-1}-u^{\beta-1}) - \frac{1}{\beta}(b^{\beta}-u^{\beta}), \quad \hbox{otherwise}
\end{array}\right.
\end{equation}
where $\Omega_L \times \Omega_R = \{ (b,u) \in \R\times\R \;|\; D_{\beta}(b|u) \in \R_+ \}$ is the domain of $\beta$-divergence. As observed in \cite{woo17}, non-negativeness of the range of $\beta$-divergence is guaranteed under assumption $x^{\beta-2} \ge 0$. Note that the mathematical formulation of quasi-likelihood function with power variance function~\eqref{powerlaw} is equal to that of $\beta$-divergence, i.e., $D_{\beta}(b|\mu) = - \sigma^2 Q(b;\mu)$
where $Q(b;\mu) = -\int_{\mu}^b \frac{b-x}{\sigma^2V(x)}dx$. However, due to an additional condition on the variance function $V(x) = x^{2-\beta} \ge 0$, the equivalence is not always true on the domain $\Omega_L \times \Omega_R$ of $\beta$-divergence. As observed in \cite{woo17}, $\beta$-divergence can be reformulated to  Bregman-beta divergence~\eqref{betaBregman} under restriction of the domain to $\dom\Phi \times int(\dom\Phi)$ where $\Phi$ is defined in~\eqref{basefn}. In fact, the regular Tweedie distribution~\cite{jorgensen97,tweedie84} is developed based on Bregman-beta divergence~\eqref{betaBregman}. The details are dealt with in Section \ref{secQA}.

\section{K-LED: Legendre exponential dispersion model with K cumulants and $K \ge 1$\label{sec3}}
This Section presents the K-LED model, Legendre exponential dispersion model with K cumulants, derived from Bregman divergence associated with Legendre in~\eqref{canD}.
    
Let us start with natural exponential families~\cite{barndorff14,jorgensen97,wainwright08}:
\begin{equation}\label{nef}
p_{\psi}(b;\theta) = \exp(\inprod{b}{\theta} - \psi(\theta))p_1(b)
\end{equation}    
where $b \in {\cal B} \subseteq \R^n$ is an observation (or a random vector) and $\theta \in \Theta = \{ \theta \in \R^n \;|\; \psi(\theta) < +\infty \}$ is a canonical parameter. For all $\theta \in \Theta$, if \eqref{nef} is uniquely determined then it is full. Note that it is regular if $\Theta$ is open, and non-regular if $\Theta$ is not open. Here, ${\cal B}$ is a set of random vectors and $B = cl(\hbox{conv} {\cal B})$. The minimal condition of \eqref{nef} means that $\inprod{b}{\theta}$ is not constant for any non-zero $\theta$. When $b$ is replaced by a sufficient statistic $\zeta(b)$, \eqref{nef} becomes the traditional exponential families. 
For simplicity, we only consider exponential dispersion model~\cite{jorgensen97} (i.e., natural exponential families with an additional dispersion parameter). 

From $\int_{{\cal B}} p_{\psi}(b;\theta) \nu(db) = 1$, we have 
\begin{equation}\label{PSIx}
\psi(\theta) = \log \int_{\cal B} \exp(\inprod{b}{\theta})p_1(b)\nu(db),
\end{equation}
where $\nu(db)$ is an appropriate  continuous Lebesgue (or discrete counting) measure depending on ${\cal B}$. Note that $\psi$ in \eqref{PSIx} is a cumulant function (or log-partition function) of $p_{\psi}$ and analytic on its interior of the domain. Additionally,  under the minimality condition of \eqref{nef},  it is not difficult to show that $\psi(\theta)$ is strictly convex~\cite{brown86,wainwright08}. 
The main advantage of \eqref{nef} is that we can easily obtain mean (first cumulant), covariance (second cumulant), or even higher order cumulants of observations from the cumulant generating function 
$ 
K_{\psi}(x;\theta) = \log M_{\psi}(x;\theta) = \psi(\theta + x) - \psi(\theta)
$ 
where $\{x\} + \dom\psi \subseteq \dom\psi$ and $M_{\psi}(x;\theta) = \E(\exp(\inprod{x}{b})) =  \exp( \psi(\theta + x) - \psi(\theta))$ is the moment generating function.
For instance,
$ 
\nabla K_{\psi}(0;\theta) =  \nabla\psi(\theta) = \E(b)
$  
and 
$\nabla^2 K_{\psi}(0;\theta) = \nabla^2\psi(\theta) = cov(b)$, where $\theta \in int(\dom \psi)$. In this way, a cumulant generating function uniquely determines a probability distribution within minimal natural exponential families~\cite{barndorff14}.  Due to $\mu = \nabla\psi(\theta) \in int(\dom\psi^*)$  and the additional condition ${\cal B} \subseteq \dom\psi^*$~\cite{banerjee05}, $\dom\psi^*$ is known as the {\it mean parameter space} and $\dom\psi$ is known as the {\it canonical parameter space}~ \cite{amari00,wainwright08}. As described in \cite[Theorem 3]{banerjee05} and \cite[Theorem 9.1, 9.2]{barndorff14}, the mean parameter space $\dom\psi^*$ and a set of observations ${\cal B}$ satisfies the following condition
\begin{equation}\label{assumeKLED}
int(\dom\psi^*) = int(B) \quad \hbox{and} \quad {\cal B} \subseteq \dom\psi^* \subseteq B 
\end{equation}
where $B = cl(\hbox{conv}{\cal B})$.  In this work, we assume that \eqref{assumeKLED} is always true, unless otherwise stated. Note that, from \eqref{increl}, we have $int(\dom\psi) \subseteq \dom(\partial \psi)$. If $\dom(\partial\psi) \setminus int(\dom\psi) \not=  \emptyset$ then we can not obtain unique mean and variance on the set $\dom(\partial\psi) \setminus int(\dom\psi)$. Therefore, the condition  $int(\dom\psi) = dom(\partial\psi)$ is highly demanded. Actually, this is achieved through the steepness condition~\eqref{steep} of $\L_n$.  However, a function in $\L_n$ is not always analytic on its interior of the domain, and thus it may not become a cumulant function of minimal natural exponential families. For instance, consider a convex function $\psi(x) = x^{8/3}$ of Legendre type. The domain of this function is $\R$ and $\dom(\nabla^k \psi) = \R$ for all $k=0,1,2$. However, we have $\dom(\nabla^3 \psi)$ (= $\R_{++}$ or $\R_{--}$). There is domain inconsistency depending on the order of differentiability.  

As observed in Example \ref{ex8}, Bregman-divergence-guided log-concave density function  naturally has the first cumulant or the mean parameter space. Hence, if the conjugate of a base function of Bregman divergence satisfies mean-variance relation, such as power variance function~\eqref{powerlaw}, then we have a log-concave density function with the first and the second cumulants. In this way, we can build up the relaxed exponential dispersion model with finite cumulants. 
\begin{definition}[{\bf K-LED}]
Let $b \in {\cal B}$ be an observation with \eqref{assumeKLED}. Consider $\psi \in \L_n \cap C^K(int(\dom\psi))$ with $\nabla\psi(\theta) = \E[b]$ and $K \ge 1$. Then Legendre exponential dispersion model with K cumulants (K-LED) is defined as
\begin{equation}\label{EDx}
p_{\psi}(b;\theta,\sigma^2) = \exp\left(\frac{\inprod{b}{\theta} - \psi(\theta)}{\sigma^2}\right)p_1(b,\sigma^2)
\end{equation}
where $\sigma^2>0$ is a dispersion parameter, $\dom\psi$ is the canonical parameter space, and $\dom\psi^*$ is the mean parameter space. $p_1(b,\sigma^2)$ is a base measure satisfying $\int_{\cal B} p_{\psi}(b;\theta,\sigma^2)\nu(db)=1$. Note that $p_{\psi}$ is regular, if $\dom\psi$ is open and non-regular, if $\dom\psi$ is not open. In addition, $K_{\psi}(x;\theta,\sigma^2) = \sigma^{-2}(\psi(\theta + x\sigma^2) - \psi(\theta))$ is a cumulant generating function up to K-th cumulant. Here, $\theta + x \sigma^2 \in \dom\psi$, for all $\theta \in dom\psi$.
\end{definition}

\begin{remark}\label{rmk1}
\begin{enumerate}
\item[]
\item If $p_{\psi}(b;\theta,\sigma^2)$ is uniquely determined for all $\theta \in \dom\psi = \{ \theta \in \R^n \;|\; \psi(\theta) < +\infty \}$ then \eqref{EDx} is the full K-LED model for a given $\sigma^2$.
\item In case of $\nabla\psi(\theta_0) = \nabla^2\psi(\theta_0)=0$, the K-LED model~\eqref{EDx} is degenerate at $\theta_0 \in \dom\psi$.
\item  Let $x=b/\sigma^2$ then~\eqref{EDx} can be reformulated as $\exp(\inprod{x}{\theta} - \sigma^{-2}\psi(\theta))p_1(x,\sigma^2)$. This becomes the additive K-LED model corresponding to the classic additive exponential dispersion model~\cite{jorgensen97}. Additionally, let $\sigma^2$ in \eqref{EDx} be a constant, $\theta_1 = \theta/\sigma^2$, and $\psi_1(\theta_1) = \psi(\sigma^2\theta_1)/\sigma^2$. Then, we get a density function in natural exponential families:
\begin{equation}\label{varEDM}
p_{\psi_1}(b;\theta_1) = \exp(\inprod{b}{\theta_1} - \psi_1(\theta_1))\bar{p}_1(b).
\end{equation}
Then the mean and variance of the K-LED model~\eqref{EDx} are given as $\nabla\psi(\theta) = \E(b)=\nabla\psi_1(\theta)$ and $\sigma^2\nabla^2\psi(\theta) = \E[(b-\E(b))(b-\E(b))^T] = \nabla^2\psi_1(\theta)$, where $\psi \in C^2(int(\dom\psi))$. In this way, \eqref{varEDM} is known as a density function of the scaled exponential families. See~\cite{jiang12,kulis12} for more details on the scaled exponential families for Dirichlet process mixture model where $\sigma^2$ is regularized to control accuracy of the density estimation. 
\end{enumerate}
\end{remark}
As commented in Section \ref{betasec} and \cite{woo17}, there is partial equivalence between a subclass of $\beta$-divergence and quasi-likelihood function with power variance function~\eqref{powerlaw}. Hence, it is natural to consider the relation between Bregman divergence associated with Legendre and the proposed K-LED model~\eqref{EDx}. In case of K-LED, the minimum requirement is the existence of the first cumulant, i.e., the mean parameter space. As noticed in Example \ref{ex8}, it can be satisfied by Bregman divergence associated with Legendre.  
\begin{theorem}\label{openeq}
 Let $b \in {\cal B}  \subseteq \dom\psi^*$, $\psi^* \in \L_n$, and $\psi \in \L_n$. Assume that $\dom \psi^*$ and $\dom \psi$ are open. Then $p_{\psi}(b;\theta,\sigma^2) =  \exp(-d_{\psi^*}(b;\theta)/\sigma^2)p_0(b,\sigma^2)$ is a parameterized log-concave density function with the first cumulant $\nabla\psi(\theta) = \E[b] \in \dom\psi^*$ for all $\theta \in \dom\psi$. Here, $\sigma^2 > 0$ is a constant and $p_0(b,\sigma^2)$ is a base measure satisfying $\int_{\cal B} p_{\psi}(b;\theta,\sigma^2)\nu(db)=1$. That is, $p_{\psi}(b;\theta,\sigma^2)$ is the $1$-LED model.  
\end{theorem}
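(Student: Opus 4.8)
The plan is to rewrite the claimed density in the K-LED template \eqref{EDx} and then read off the first cumulant from the cumulant generating function. First I would unfold the canonical divergence using \eqref{canD} with base function $\psi^*$: since $\psi$ is lower semicontinuous, convex, and proper, the biconjugation $(\psi^*)^* = \psi$ holds, so $d_{\psi^*}(b;\theta) = D_{\psi^*}(b|\nabla\psi(\theta)) = \psi^*(b) + \psi(\theta) - \inprod{b}{\theta}$. Substituting into $p_\psi(b;\theta,\sigma^2) = \exp(-d_{\psi^*}(b;\theta)/\sigma^2)p_0(b,\sigma^2)$ gives $p_\psi(b;\theta,\sigma^2) = \exp((\inprod{b}{\theta}-\psi(\theta))/\sigma^2)\,p_1(b,\sigma^2)$ with $p_1(b,\sigma^2) := \exp(-\psi^*(b)/\sigma^2)\,p_0(b,\sigma^2)$, which is exactly \eqref{EDx} with cumulant function $\psi$ and dispersion parameter $\sigma^2$; this is the $\sigma^2$-dispersed version of the construction in Example \ref{ex8} applied with base function $\psi^*$ (so that $f^* = \psi$).

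Next I would check that this is a genuine log-concave density for each $\theta \in \dom\psi$. The map $b \mapsto d_{\psi^*}(b;\theta)/\sigma^2$ lies in $\L_n$: by Theorem \ref{isoL}, $\psi^* \in \L_n$, and positive scaling and addition of an affine term both preserve membership in $\L_n$. Since $\dom\psi$ is open, $\theta \in int(\dom\psi)$, and Theorem \ref{isoL} places $\nabla\psi(\theta) \in int(\dom\psi^*)$; hence Theorem \ref{legendreth}(2) gives coercivity of $b \mapsto D_{\psi^*}(b|\nabla\psi(\theta)) = d_{\psi^*}(b;\theta)$. The equivalence in Theorem \ref{Thcoercivity} (coercivity $\Leftrightarrow$ finiteness of $\int \exp(-\,\cdot\,)$) then makes the normalizing integral $\int_{\cal B}\exp(-d_{\psi^*}(b;\theta)/\sigma^2)\,\nu(db)$ finite and positive, so a base measure $p_0(b,\sigma^2)$ with $\int_{\cal B}p_\psi(b;\theta,\sigma^2)\nu(db)=1$ exists and $p_\psi(\cdot;\theta,\sigma^2)$ is log-concave in the sense of Definition \ref{logconv}.

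Finally I would extract the first cumulant. Openness of $\dom\psi$ gives $\psi \in C^1(\dom\psi)$ and a neighbourhood of $\theta$ inside $\dom\psi$, so the cumulant generating function of \eqref{EDx}, namely $K_\psi(x;\theta,\sigma^2) = \sigma^{-2}(\psi(\theta+x\sigma^2)-\psi(\theta))$, is well defined and differentiable at $x=0$; by the chain rule $\E[b] = \nabla_x K_\psi(0;\theta,\sigma^2) = \nabla\psi(\theta)$. (Equivalently, one may invoke the extended global optimum property of Theorem \ref{canTH} together with the standard exponential-family identity $\nabla\psi = \E[b]$.) Since $\theta \in \dom\psi = int(\dom\psi)$, Theorem \ref{isoL} again yields $\E[b] = \nabla\psi(\theta) \in int(\dom\psi^*) = \dom\psi^*$, so $\dom\psi^*$ is the mean parameter space. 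Combined with $\psi \in \L_n \cap C^1(\dom\psi)$ and $\nabla\psi(\theta) = \E[b]$, this is exactly the defining data of the $1$-LED model.

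The step needing the most care is the one hidden in ``$\psi$ is the cumulant function'': that one and the same $\theta$-independent base measure $p_1(b,\sigma^2)$ makes $\int_{\cal B}p_\psi(b;\theta,\sigma^2)\nu(db)=1$ for \emph{every} $\theta \in \dom\psi$, so that $K_\psi$ above really is the cumulant generating function. The second paragraph only delivers normalizability for each fixed $\theta$; the consistency across $\theta$ with $\psi$ as log-partition is precisely the proviso ``if $f^*$ is a cumulant function'' recorded after Example \ref{ex8}, and I would treat it as the standing assumption implicit in calling \eqref{EDx} a K-LED model (it is verified directly for the Tweedie and extended-logistic instances studied in the later sections). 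Granting that, the remainder is routine bookkeeping with Theorems \ref{isoL}, \ref{legendreth}, \ref{Thcoercivity}, and \ref{canTH}.
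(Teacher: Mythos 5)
Your proposal is correct and its first half (coercivity of $b \mapsto d_{\psi^*}(b;\theta)$ via Theorem \ref{legendreth}(2), normalizability via Theorem \ref{Thcoercivity}(4)) coincides with the paper's argument. Where you diverge is in how the first cumulant is obtained. The paper does \emph{not} differentiate the cumulant generating function; it invokes the extended global optimum property (Theorem \ref{canTH}) to write $\nabla\psi^*(\E(b)) = \argmin_{\theta \in \dom\psi} \E(d_{\psi^*}(b;\theta)) = \argmin_{\theta} d_{\psi^*}(\E(b);\theta) + h(b)$ with $h(b) = \E[\psi^*(b)] - \psi^*(\E(b))$, and concludes that every mean value $\mu = \E(b) \in \dom\psi^*$ corresponds to a unique canonical parameter $\theta = \nabla\psi^*(\mu) \in \dom\psi$ (and conversely, by openness and Theorem \ref{isoL}). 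That is the precise sense in which the model ``has the first cumulant'': a well-defined bijection between the mean parameter space and the canonical parameter space, obtained without ever assuming that a single $\theta$-independent base measure normalizes the family. Your primary route via $\nabla_x K_\psi(0;\theta,\sigma^2) = \nabla\psi(\theta)$ is cleaner when it applies, but as you yourself observe it presupposes that $\psi$ is genuinely the log-partition function for a fixed $p_1(b,\sigma^2)$ across all $\theta$ --- exactly the proviso ``if $f^*$ is a cumulant function'' after Example \ref{ex8}. You correctly flag this and offer the Theorem \ref{canTH} argument as the fallback; that fallback is in fact the paper's proof, so I would promote it from a parenthetical to the main line and demote the CGF computation to a remark valid under the additional normalization-consistency hypothesis. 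With that reordering the two proofs are essentially equivalent, and your explicit identification of the hidden consistency assumption is a genuine improvement in transparency over the paper's presentation.
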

\begin{proof}
By Theorem \ref{legendreth} (2), it is easy to check coercivity of $d_{\psi^*}(b;\theta)$ with respect to $b$. Thus, $p_{\psi}(b;\theta,\sigma^2)= \exp(-d_{\psi^*}(b;\theta)/\sigma^2)p_0(b,\sigma^2)$ is a log-concave density function with an appropriate base measure $p_0(b,\sigma^2)$ satisfying $\int_{{\cal B}} p_{\psi}(b;\theta,\sigma^2)\nu(db) =1$ (see Theorem \ref{Thcoercivity} (4)). Regarding the existence of the first cumulant, from Theorem \ref{canTH}, we have 
\begin{eqnarray}\nonumber
\nabla\psi^*(\E(b)) &=& \argmin_{\theta \in dom \psi} \E(d_{\psi^*}(b;\theta))\\ \label{optx1}
&=& \argmin_{\theta \in dom \psi} d_{\psi^*}(\E(b);\theta) + h(b)
\end{eqnarray}
where $h(b) =  \E[\psi^*(b)] - \psi^*(\E(b))$. Hence, for any mean value $\mu=\E(b) \in \dom\psi^*$, there is always corresponding unique canonical parameter $\theta = \nabla\psi^*(\mu) \in \dom\psi$. 
\end{proof}
Typical examples of Theorem \ref{openeq} are gamma and normal distributions. For gamma distribution, $\psi^*(x) = -\log(x)$ and $\psi(x) = -\log(-x)-1$. For normal distribution, $\psi(x) = \psi^*(x) = \frac{1}{2}x^2$. An extension of normal distribution, having a constant unit variance function, is not uncomplicated. In the following example, we introduce an extended normal distribution via power variance function~\eqref{powerlaw}. This distribution is induced from Bregman-beta divergence $d_{\Phi}$~\eqref{betaBregman} or Bregman-Tweedie divergence $d_{\Psi}$~\eqref{tweedieBregman}, which were introduced in \cite{woo17,woo19b}. 
\begin{example}[Extended normal distribution]\label{TweedieX}
Let $b \in {\cal B} \subseteq dom\Phi$ and $\beta \in (1,\infty) \cap \R_e$. Then $\Phi,\Psi \in \L_1 \cap C^1(\R)$ where $\Phi(x) = \frac{1}{\beta(\beta-1)}x^{\beta}$ and $\Psi(y) = \frac{1}{\beta}((\beta-1)y)^{\frac{\beta}{\beta-1}}$. Therefore, we have $d_{\Psi}(b;\theta) = d_{\Phi}(\theta;b)$. Consider Bregman-beta divergence
$$
d_{\Phi}(b;\theta) = \Phi(b) + \Psi(\theta) - \inprod{b}{\theta}
$$
From $\dom\Phi = \dom\Psi = \R$, $\Phi$ and $\Psi$ are coercive and thus $\exp(-d_{\Phi}(b;\theta))$ is a log-concave density function. From Theorem \ref{openeq}, we have an $1$-LED model 
$$
p_{\Psi}(b;\theta,\sigma^2) = \exp(-d_{\Phi}(b;\theta)/\sigma^2)p_0(b,\sigma^2)
$$
with the first cumulant function $\nabla\Psi(\theta) = \E[b] \in \R$ where $\theta \in \R$, and a base measure $p_0(b,\sigma^2)$ satisfying $\int_{\cal B} p_{\Psi}(b;\theta,\sigma^2) \nu(db)=1$. When $\beta=2$, we obtain the famous normal distribution with $d_{\Phi}(b;\theta) = \frac{1}{2}(b-\theta)^2$. Hence, the $1$-LED model $p_{\Psi}$ is an extended normal distribution satisfying power variance function~\eqref{powerlaw}. 
When $\beta \not=2$, this distribution does not have the classic cumulant generating function~\cite{bar-lev86}. Actually, if $\beta \in (2,+\infty) \cap \R_e$ then $\Psi \in C^1(\R)$ and thus $p_{\Psi}$ is the $1$-LED model. However, if $\beta \in (1,2) \cap \R_e$ then $\Psi(\theta) \in C^K(\R)$ with $K \ge 2$ depending on $\beta$. Since $\nabla\Psi(0) = \nabla^2\Psi(0)=0$, as commented in Remark \ref{rmk1} (3), $p_{\Psi}$ is a degenerate $K$-LED model. See Figure \ref{fig:img1} for this extended normal distribution. 
\end{example}

\begin{figure*}[t]
\centering
\includegraphics[width=5.5in]{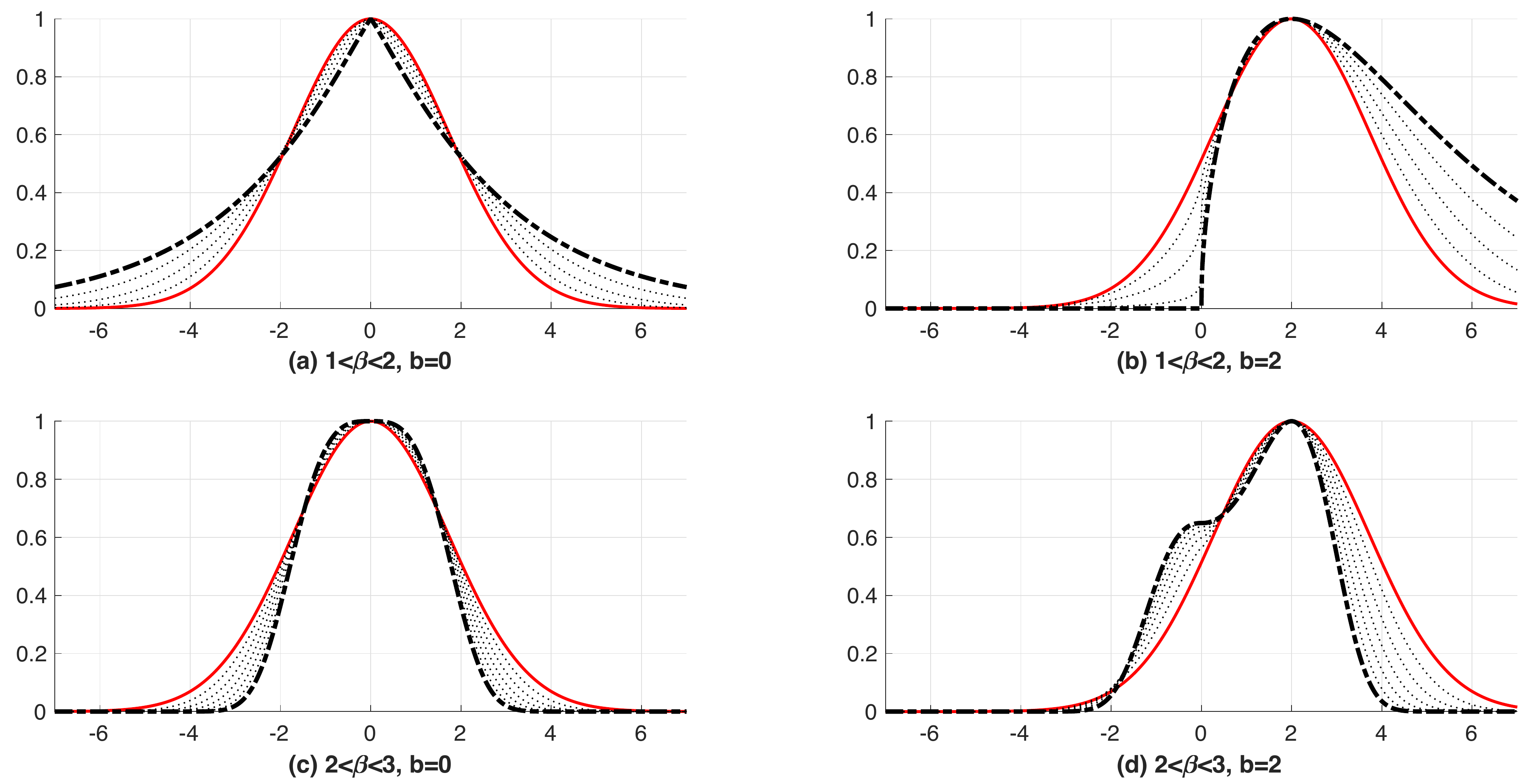} 
\caption{The graphs of $1$-LED with \eqref{powerlaw} and $\beta \in (1,\infty)\cap \R_e$. 
 The red curve is the normal distribution ($\beta=2$ and $\exp(-(b-\mu)^2/6)$). Others are extended normal distributions $\exp(-D_{\Psi}(b|\mu)/3)$ with respect to $\mu$. That is, (a) $b=0$ and $\beta=16/9,14/9,...10/9 \hbox{(dash-dot line)}$, (b) $b=2$ and $\beta=16/9,14/9,...,10/9 \hbox{(dash-dot line)}$, (c) $b=0$ and $\beta=30/9 \hbox{(dash-dot line)},28/9,...,20/9$, and (d) $b=2$ and $\beta=30/9 \hbox{(dash-dot line)},28/9,...,20/9$. For more details, see Example \ref{TweedieX}.
}
\label{fig:img1}
\end{figure*}

Now, consider the case that $\dom\psi^*$ or $\dom\psi$ are not open. In other words, the regular K-LED model with a non-trivial mean parameter space which is not open and the non-regular K-LED model, of which the canonical parameter space is not open. A typical example of the regular K-LED model is Bernoulli distribution having discrete random variables. That is, ${\cal B} = \{0,1\}$ and $B = cl\hbox{conv}{\cal B} = [0,1]$ (see \eqref{assumeKLED}). 
 \begin{theorem}\label{nopeneq1}
 Let $b \in {\cal B}$ with \eqref{assumeKLED}, $\psi^* \in \L_n$, $\psi \in \L_n$, and $\sigma^2 > 0$ be a constant. (a) Assume that $\dom\psi$ is open and $\dom\psi^*$ is not open. Then $p_{\psi}(b;\theta,\sigma^2) = \exp(-d_{\psi^*}(b;\theta)/\sigma^2)p_0(b,\sigma^2)$ is a parameterized log-concave density function with the first cumulant $\nabla\psi(\theta) = \E[b] \in \dom\psi^*$ in an extended-valued real number system. Here, $p_0(b,\sigma^2)$ is a base measure satisfying $\int_{\cal B} p_{\psi}(b;\theta,\sigma^2)\nu(db)=1$. Hence, $p_{\psi}(b;\theta,\sigma^2)$ is the $1$-LED model.  (b) Assume that $\dom \psi$ is not open and $\dom\psi^*$ is open. Then $p_{\psi}(b;\theta,\sigma^2) = \exp(-d_{\psi}(\theta;b)/\sigma^2)p_0^{nr}(b,\sigma^2)$ is a parameterized log-concave density function with the first cumulant $\nabla\psi(\theta) = \E[b] \in \dom\psi^*$ only when $\theta \in int(\dom\psi)$. That is, $p_{\psi}(b;\theta,\sigma^2)$ is a non-full $1$-LED model with $\theta \in int(dom\psi)$. Here, $p^{nr}_0(b,\sigma^2)$ is a base measure satisfying $\int_{\cal B} p_{\psi}(b;\theta,\sigma^2)\nu(db)=1$. 
\end{theorem}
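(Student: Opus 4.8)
The plan is to follow the proof of Theorem~\ref{openeq} closely, treating (a) and (b) in turn, while now tracking the boundary behaviour that non-openness forces. For part~(a): since $\dom\psi$ is open, $int(\dom\psi)=\dom\psi$, so Theorem~\ref{isoL} gives $\nabla\psi(\theta)\in int(\dom\psi^*)$ for every $\theta\in\dom\psi$; hence Theorem~\ref{legendreth}(2) applied to $\psi^*\in\L_n$ shows $b\mapsto d_{\psi^*}(b;\theta)=D_{\psi^*}(b|\nabla\psi(\theta))$ is coercive, Theorem~\ref{Thcoercivity}(4) lets us choose $p_0(b,\sigma^2)$ with $\int_{\cal B}p_\psi(b;\theta,\sigma^2)\nu(db)=1$, and log-concavity is Theorem~\ref{legendreth}(1). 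Expanding $d_{\psi^*}(b;\theta)=\psi^*(b)+\psi(\theta)-\inprod{b}{\theta}$ puts $p_\psi$ into the K-LED form~\eqref{EDx} with base measure $p_1(b,\sigma^2)=\exp(-\psi^*(b)/\sigma^2)p_0(b,\sigma^2)$.

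For the first cumulant in~(a) I would invoke the extended global optimum property of Theorem~\ref{canTH} with $f=\psi^*$: the minimiser of $\theta\mapsto\E(d_{\psi^*}(b;\theta))$ is $\nabla\psi^*(\E[b])$ when $\E[b]\in int(\dom\psi^*)$ and $\pm\infty$ when $\E[b]\in\dom\psi^*\setminus int(\dom\psi^*)$; inverting through the isomorphism $(\nabla\psi^*)^{-1}=\nabla\psi$ of Theorem~\ref{isoL} and using $\dom\psi=int(\dom\psi)$ gives $\nabla\psi(\theta)=\E[b]$ for all $\theta\in\dom\psi$, the boundary points of $\dom\psi^*$ being reached only as $\theta=\pm\infty$ --- this is the ``extended-valued real number system'' clause, exactly as in Example~\ref{ex8}. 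Since a Legendre function is automatically $C^1$ on its interior, $\psi\in\L_n\cap C^1(int(\dom\psi))$ with $\nabla\psi(\theta)=\E[b]$, so $p_\psi$ is a 1-LED model; it is regular (because $\dom\psi$ is open) with a mean parameter space $\dom\psi^*$ that is merely not open.

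For part~(b) I would first note that, since $(\psi^*)^*=\psi$, the definition~\eqref{canD} yields $d_\psi(\theta;b)=\psi(\theta)+\psi^*(b)-\inprod{b}{\theta}=d_{\psi^*}(b;\theta)=D_{\psi^*}(b|\nabla\psi(\theta))$ for every $\theta\in int(\dom\psi)$; moreover openness of $\dom\psi^*$ means ${\cal B}\subseteq\dom\psi^*=int(\dom\psi^*)$, so $\nabla\psi^*$ is defined throughout ${\cal B}$ (and Theorem~\ref{legendreth}(5) applies if one prefers that route). Coercivity in $b$ (Theorem~\ref{legendreth}(2), with $\nabla\psi(\theta)\in int(\dom\psi^*)$) and normalizability (Theorem~\ref{Thcoercivity}(4)) then proceed as in~(a), giving the log-concave density with base measure $p_0^{nr}(b,\sigma^2)$ and the K-LED form~\eqref{EDx} via the same expansion; and for $\theta\in int(\dom\psi)$, where $\psi$ is $C^1$, Theorem~\ref{canTH} identifies $\nabla\psi(\theta)=\E[b]\in int(\dom\psi^*)=\dom\psi^*$. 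The ``non-full'' qualifier then comes from the complementary fact that at a boundary point $\theta_0\in bd(\dom\psi)\cap\dom\psi$ the steepness condition~\eqref{steep} of $\L_n$, i.e.\ $\dom\partial\psi=int(\dom\psi)$, forbids a finite $\nabla\psi(\theta_0)$, so no first cumulant exists there; hence $p_\psi$ cannot be extended to a 1-LED model over all of $\dom\psi$, and it is non-regular because $\dom\psi$ is not open.

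I expect the main obstacle to be the bookkeeping in~(b): keeping straight which of $\psi,\psi^*$ is the generating Legendre function --- the Bregman divergence must be coercive in $b$, which forces $\theta\in int(\dom\psi)$, while openness of $\dom\psi^*$ is used to keep $\nabla\psi^*$ defined on all of ${\cal B}$ --- and articulating why the breakdown on $bd(\dom\psi)$ is a failure of the first cumulant (``non-full'') rather than of the density itself, i.e.\ cleanly separating ``non-full'' from ``non-regular'', with inverse Gaussian/Lévy as the guiding example. A minor point, already present in Theorem~\ref{openeq}, is that Theorem~\ref{Thcoercivity}(4) is phrased for Lebesgue measure while ${\cal B}$ may carry a counting measure; I would absorb that discrepancy into $p_0$, resp.\ $p_0^{nr}$, exactly as the paper does.
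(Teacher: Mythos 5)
Your part (a), and the positive half of part (b), follow the paper's own route: coercivity of the Bregman divergence in $b$ from Theorem~\ref{legendreth}(2), normalizability from Theorem~\ref{Thcoercivity}(4), and the first cumulant from Theorem~\ref{canTH}, with boundary points of $\dom\psi^*$ reached only as $\theta\to\pm\infty$ --- this is precisely the paper's sequence argument ($\mu_m\to\E(b)\in\dom\psi^*\setminus int(B)$ forces $\theta_m=\nabla\psi^*(\mu_m)\to\pm\infty$ by steepness), so there is nothing to add there.

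Where you genuinely diverge is the ``only when'' half of (b). You derive it from the failure of the first cumulant at $\theta_0\in\dom\psi\setminus int(\dom\psi)$ (steepness gives $\dom\partial\psi=int(\dom\psi)$, hence no finite $\nabla\psi(\theta_0)$), and you explicitly frame the breakdown as ``a failure of the first cumulant rather than of the density itself.'' The paper's proof rests on the opposite mechanism: invoking Bauschke--Combettes Th.~14.17, it shows that for $\theta\in\dom\psi\setminus int(\dom\psi)$ the map $b\mapsto d_{\psi}(\theta;b)$ is \emph{not coercive}, so $\exp(-d_{\psi}(\theta;b)/\sigma^2)$ is not integrable on its own and $p_{\psi}$ fails to be a log-concave density function at such $\theta$. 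Example~\ref{exIG} and Remark~\ref{2LEDnr} make this concrete: at $\theta=0$ the inverse Gaussian degenerates to the L\'evy density, which can still be normalized by a suitable $p_0^{nr}$ but is no longer log-concave (its negative logarithm contains $\tfrac{3}{2}\log b$). Your first-cumulant argument is still logically sufficient for the literal ``only when'' statement, since the conjunction ``log-concave density with first cumulant'' fails as soon as either conjunct does; but the side-claim that the density itself survives on $bd(\dom\psi)$ is incorrect in the paper's framework, and by omitting the non-coercivity you lose the structural fact the paper relies on downstream for the non-regular Tweedie/L\'evy discussion.
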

\begin{proof}
(a) In Theorem \ref{openeq}, it is treated the case $int(B)$ and $\dom\psi$ is open. Consider ${\cal B} \not\subset int(B)$ and let $\Omega_1 = \dom\psi^* \setminus int(B) \not= \emptyset$. Since $\dom\psi$ is open, it is not difficult to see that $p_{\psi}(b;\theta;\sigma^2) = \exp(-d_{\psi^*}(b;\theta)/\sigma^2)p_0(b,\sigma^2)$ is a log-concave density function for all $b \in {\cal B}$ even $\dom\psi^*$ is not open (see Theorem \ref{legendreth} (2)).
Now, we only need to show the existence of a function between the mean parameter space and the canonical parameter space in an extended-valued real number system. Let us consider \eqref{optx1}:
$ 
\nabla\psi^*(\E(b)) = \argmin_{\theta \in int(dom \psi)} d_{\psi^*}(\E(b);\theta). 
$
From Theorem \ref{canTH}, when $\E(b) \in \Omega_1$, we have $\nabla\psi^*(\E(b)) \in \{\pm\infty\}$. It means that, due to the steepness condition of Legendre, there is a sequence $\{ \mu_m\} \in int(B)$ such that $\mu_m \rightarrow \E(b) \in \Omega_1$ as $m \rightarrow \infty$. Therefore, we have 
$\nabla\psi^*(\mu_m) = \theta_m \rightarrow \{\pm\infty\}$ and thus $\nabla\psi^*: int(B)\cup\Omega_1 \rightarrow \dom\psi \cup \{ \pm\infty \}$ exists  within an extended-valued real number system for all $\E(b) \in \dom\psi^* = int(B)\cup\Omega_1$. In consequence, $\psi$ is a cumulant function of the log-concave density function $p_{\psi}$ having the first cumulant, i.e., $p_{\psi}$ is in the $1$-LED model. 

(b) Consider $d_{\psi}(\theta;b) = \psi(\theta) + \psi^*(b) - \inprod{\theta}{b}$ with $\dom\psi (\not= int(\dom\psi))$ and $\dom\psi^* =  int(\dom\psi^*)$. In Theorem \ref{openeq}, the case $\theta \in int(\dom\psi)$ is treated. Let $\theta \in \Omega_2 = \dom\psi \setminus int(\dom\psi)$. As done in (a), there is a sequence $\{ \theta_m\} \in int(\dom\psi)$ with $\nabla\psi(\theta_m) = \mu_m \in int(\dom\psi^*)$ such that $\theta_m \rightarrow \theta_{\infty} = \theta \in \Omega_2$ satisfying
$
\lim_{m \rightarrow \infty}  \nabla\psi(\theta_m) = \mu_{m} \in \{\pm\infty\}.
$
Therefore, we have $\nabla\psi(\theta) \in \{\pm\infty\}$ where $\theta \in \Omega_2$ and $\E(b) \in \{\pm\infty\}$. That is, $\nabla\psi : int(\dom\psi) \cup \Omega_2 \rightarrow \dom\psi^* \cup \{ \pm \infty \}$ in an extended-valued real number system. However, from \cite[Th. 14.17]{bauschke11}, when $\theta \in \Omega_2$, $d_{\psi}(\theta;b)$ is not coercive in terms of $b$. Therefore, $p_{\psi}$ is a log-concave density function and a non-full $1$-LED model only when $\theta \in int(\dom\psi)$.
\end{proof}

\begin{example}
Consider Bernoulli distribution. Let $b_i \in {\cal B} = \{0,1\}$ and $\E(b) = \mu \in B = [0,1]$, then we have Bregman divergence associated with Legendre $\psi$:
$
d_{\psi}(b;\theta) = D_{\psi}(b|\mu) = b \ln\left(\frac{b}{\mu}\right) + (1-b)\ln\left(\frac{1-b}{1-\mu}\right)
$
where $\psi^*(\theta) = \log(1 + \exp(\theta))$ and $\psi(\mu) = \mu\ln\mu + (1-\mu)\ln(1-\mu)$. Thus, we have Bregman-divergence-guided log-concave density function
$
p(b;\theta,\sigma^2) = \exp(-d_{\psi}(b;\theta)/\sigma^2)p_0(b,\sigma^2)
$
where $p_0(b,\sigma^2) = \exp(b\ln b + (1-b)\ln(1-b))/\sigma^2$. Thus, the corresponding maximum likelihood estimation is
$$
\max_{\theta}\prod_{i=1}^M p(b_i;\theta,1) = p(b)\exp\left(\sum_{i=1}^M b_i\log\mu + (1-b_i)\log(1-\mu)\right) = p(b)\exp\left(\inprod{\sum_{i=1}^Mb_i}{\theta} - M\psi^*(\theta)\right).
$$ 
where $p(b) = \prod_{i=1}^M p_0(b_i,1)$.
Then
$
\mu_{avg}= \frac{1}{M}\sum_{i=1}^M b_i = \nabla\psi^*(\theta_{avg}) = \hbox{sigm}(\theta_{avg})
$
where $\hbox{sigm}(\theta) = \frac{1}{1 + \exp(-\theta)}$ is a sigmoid function. Within an extended-valued real number system, we have
$$
\nabla\psi: (0,1) \cup \{0,1 \} \rightarrow \R \cup \{\pm\infty\} 
$$
where $\nabla\psi(0) = -\infty$, $\nabla\psi(1) = +\infty$. 
\end{example}
Note that, in Section \ref{berpoi}, we introduce the parameterized K-LED model connecting Bernoulli distribution and Poisson distribution through the convex extended logistic loss function~\cite{woo19}.

\begin{example}\label{exIG}
Consider inverse Gaussian distribution~\cite{jorgensen97}.
This distribution is a typical non-regular $1$-LED model on $\dom\Psi$ (or non-full $1$-LED model on $int(\dom\Psi)$). Let us consider Bregman-Tweedie divergence:
$
d_{\Psi}(\theta;b) = \Psi(\theta) + \Phi(b) - \inprod{\theta}{b}.
$
From Appendix, when $\beta=-1$, we have $\Phi(b) = (2b)^{-1}$ with $\dom\Phi = \R_{++}$ and $\Psi(\theta) = -\sqrt{-2\theta}$ with $\dom\Psi = \R_-$. Note that the canonical parameter space $\dom\Psi$ is not open. As observed in \cite{jorgensen97}, we have 
$
p(b;\theta,\sigma^2) = p_0(b,\sigma^2)\exp\left(-d_{\Psi}(\theta;b)/\sigma^2\right),
$ where $p_0(b,\sigma^2) = \frac{1}{\sqrt{2\pi\sigma^2 b^3}}$. If $\theta \in \dom\Psi \setminus int(\dom\Psi)$, i.e., $\theta=0$, then $\Phi(b) - \inprod{\theta}{b} = \frac{1}{2b}$ is not coercive. Thus, we have
\begin{equation}\label{levy}
p(b;0,\sigma^2) = \frac{1}{\sqrt{2\pi\sigma^2b^3}}\exp(-(\sigma^{2}2b)^{-1})\qquad \left( \approx \quad \frac{a(\sigma)}{b^{1+\alpha}} \quad \hbox{as} \;\; b \rightarrow \infty \right)
\end{equation}
This is Levy distribution (or a stable distribution with index $\alpha=0.5$) which does not have a mean value. 
\end{example}
Interestingly, Tweedie distribution between gamma distribution and inverse Gaussian distribution, i.e., $d_{\Psi}(\theta;b)$ with $-1 \le \beta \le 0$, is non-regular but it is useful for segmenting the SAR(synthetic aperture radar) images. For more details, see \cite{muller02,woob16,woo17} and Section \ref{sec4}.

\section{The $2$-LED model with mean-variance relation\label{secQ}}
This Section introduces the $2$-LED model~\eqref{EDx} with the mean-variance relation $var(b) = \sigma^2 V(\mu)$ of quasi-likelihood function~\eqref{qlike}.  For simplicity, we only consider ${\cal B} \subseteq \R^1$ with  a random variable $b \in {\cal B}$.

\subsection{The $2$-LED model for quasi-likelihood function\label{secQA}}
From \cite{mccullagh89}, a quasi-score function $\frac{\partial Q(b;\mu)}{\partial \mu} = \frac{b-\mu}{\sigma^2 V(\mu)}$ fulfills the  properties of a score function (the first derivative of a log likelihood function):
$
\begin{array}{l}
\E\left( \frac{\partial Q(b;\mu)}{\partial \mu} \right) = 0$ and 
$\E\left(\left(\frac{\partial Q(b;\mu)}{\partial \mu}\right)^2 \right) = {\cal I}_{\mu}
\end{array}
$
where $\E[b] = \mu$ and ${\cal I}_{\mu} = 1/var(b)$ is the Fisher information. From this, we get the mean-variance relation of quasi-likelihood function:
\begin{equation}\label{mvr}
var(b) = \sigma^2 V(\mu) \ge 0
\end{equation}
where $\sigma^2>0$ is a constant and $V(\mu)$ is a unit variance function~\cite{wedderburn74}. As observed in \cite{woob16}, quasi-likelihood function $Q(b;\mu)$ satisfies the global optimum property, i.e.,
$\E(Q(b;u)) \le Q(b;\E(b))$ for all $u$ on its domain. See also Theorem \ref{legendreth} (6) for Bregman divergence. Since quasi-likelihood function only requires the existence of mean and variance, it includes relatively large extended probability distribution including classical distributions. For instance, when $V(\mu) = \mu(1-\mu)$, the corresponding quasi-likelihood function is the log-likelihood function of Bernoulli distribution. However, if $V(\mu) = \mu^2(1-\mu)^2$ then we have $Q(b;\mu) = (2b-1)\log\left(\frac{\mu}{1-\mu}\right) - \frac{b}{\mu} - \frac{1-b}{1-\mu}$. As noticed in~\cite{mccullagh89}, this function does not have a corresponding cumulant function. Another interesting log-likelihood of the extended probability distribution is quasi-likelihood function with power variance function $V(\mu) = \mu^{2-\beta}$~\eqref{powerlaw}. As noticed in Section \ref{betasec}, this is a subclass of $\beta$-divergence and tightly connected to the regular $2$-LED model with \eqref{powerlaw}.
 
In fact, mean-variance relation \eqref{mvr} can be formulated with the first and the second cumulant of the K-LED model~\eqref{EDx}:
\begin{equation}\label{mvr2}
\nabla^2 \psi(\theta) = V(\nabla \psi(\theta))
\end{equation}
where $var(b) = \sigma^2 \nabla^2 \psi(\theta)$. See also \cite{kass97} for related work. For Poisson distribution, we have $\nabla^2\psi(\theta) = \nabla \psi(\theta)$ and thus $\psi(\theta) = \exp(\theta)$. But, for $V(\mu) = \mu^2(1-\mu)^2$, we do not have a closed-form solution $\psi$ satisfying \eqref{mvr2}. In case of Tweedie distribution, it is rather complicated to find $\psi$ satisfying $\nabla^2\psi(\theta) = (\nabla\psi(\theta))^{\beta-2}$ for all $\beta \in \R$. This case is separately treated in Section \ref{sec4}. First of all, let us introduce the connection between quasi-likelihood function and Bregman divergence associated with Legendre. 
\begin{theorem}\label{2led}
Let $b \in {\cal B} \subseteq \dom\psi^*$ with \eqref{assumeKLED}, $\psi \in \L_1 \cap C^2(int(\dom\psi))$, and $\psi^* \in \L_1 \cap C^2(int(\dom\psi^*))$. In addition, let $\nabla\psi(\theta) = \mu \in int(\dom\psi^*)$ be the first cumulant. Under the regularity condition $\nabla^2\psi(\theta) = V(\nabla\psi(\theta)) > 0$ for all $\theta \in int(\dom\psi)$, we have
\begin{equation}\label{taylorBreg}
d_{\psi^*}(b;\theta) = D_{\psi^*}(b|\mu) = - \sigma^2Q(b;\mu)
\end{equation}
Therefore, when $\dom\psi = int(\dom\psi)$, the log-concave density function $p_{Q}(b;\mu,\sigma^2) = \exp(Q(b;\mu))p_0(b,\sigma^2)$ is a regular $2$-LED model with mean-variance relation~\eqref{mvr2}. Here, $p_0(b,\sigma^2)$ is a base measure satisfying $\int_{\cal B} p_Q(b;\mu,\sigma^2)\nu(db) = 1$.
\end{theorem}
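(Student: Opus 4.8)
The plan is to reduce the displayed identity \eqref{taylorBreg} to a one-dimensional Taylor expansion and then read off the probabilistic conclusion from Theorem \ref{openeq} and Theorem \ref{nopeneq1}. First I would dispose of the left equality. Since $\psi \in \L_1$, Theorem \ref{isoL} gives $(\psi^*)^* = \psi$, so applying the definition of the canonical divergence in Theorem \ref{canTH} with $f=\psi^*$ yields $d_{\psi^*}(b;\theta) = D_{\psi^*}(b\,|\,\nabla\psi(\theta))$; the hypothesis $\nabla\psi(\theta)=\mu$ turns this into $d_{\psi^*}(b;\theta)=D_{\psi^*}(b|\mu)$, so the first equality is purely definitional.

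For the right equality I would expand the Bregman divergence with Taylor's formula with integral remainder. Because $\psi^*\in C^2(int(\dom\psi^*))$ and the segment joining $\mu\in int(\dom\psi^*)$ to $b\in\dom\psi^*$ lies in $\dom\psi^*$ with at most its endpoint $b$ on the boundary,
$$
D_{\psi^*}(b|\mu)=\psi^*(b)-\psi^*(\mu)-(\psi^*)'(\mu)(b-\mu)=\int_\mu^b (b-x)\,(\psi^*)''(x)\,dx .
$$
The key computation is the identification $(\psi^*)''(x)=1/V(x)$. This drops out of the inverse relation $\nabla\psi^*=(\nabla\psi)^{-1}$ of Theorem \ref{isoL}: writing $x=\nabla\psi(\eta)$ with $\eta\in int(\dom\psi)$ and differentiating $(\psi^*)'(\nabla\psi(\eta))=\eta$ in $\eta$ gives $(\psi^*)''(x)\,\psi''(\eta)=1$, while the regularity hypothesis supplies $\psi''(\eta)=V(\nabla\psi(\eta))=V(x)>0$. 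In particular $V>0$ on $int(\dom\psi^*)$, so the integrand is well defined, and it has the sign of $(b-x)$, which matches $D_{\psi^*}(b|\mu)\ge 0$ in either ordering of $b$ and $\mu$. Substituting and comparing with the definition \eqref{qlike} of $Q$, which absorbs exactly the factor $\sigma^2$, gives $D_{\psi^*}(b|\mu)=\int_\mu^b \frac{b-x}{V(x)}\,dx=-\sigma^2 Q(b;\mu)$, i.e.\ \eqref{taylorBreg}.

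For the concluding claim, I would feed this back into the $1$-LED apparatus. By Theorem \ref{legendreth} (2), $b\mapsto d_{\psi^*}(b;\theta)$ is coercive, so by Theorem \ref{Thcoercivity} (4) the function $p_Q(b;\mu,\sigma^2)=\exp(-d_{\psi^*}(b;\theta)/\sigma^2)p_0(b,\sigma^2)=\exp(Q(b;\mu))p_0(b,\sigma^2)$ is a genuine log-concave density for a suitable normalizer $p_0$. When $\dom\psi=int(\dom\psi)$ is open, Theorem \ref{openeq}, or Theorem \ref{nopeneq1}(a) if $\dom\psi^*$ fails to be open, shows $\psi$ is a legitimate cumulant function whose first cumulant is $\nabla\psi(\theta)=\E[b]=\mu$, so $p_Q$ is a regular $1$-LED model; and since $\psi\in C^2(int(\dom\psi))$ the second cumulant $\sigma^2\nabla^2\psi(\theta)=var(b)$ exists, with the regularity hypothesis being precisely the mean-variance relation \eqref{mvr2}. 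Hence $p_Q$ is a regular $2$-LED model satisfying \eqref{mvr2}.

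I expect the only real obstacle to be the boundary case $b\in bd(\dom\psi^*)$: the integral representation is immediate on $int(\dom\psi^*)$, but for $b$ on the boundary one must justify passing to the limit $b_m\to b$ with $b_m\in int(\dom\psi^*)$ in $D_{\psi^*}(b_m|\mu)=\int_\mu^{b_m}(b_m-x)(\psi^*)''(x)\,dx$, using that $\psi^*$ restricted to the segment is convex, hence continuous up to its finite endpoint value, and that the integrand has a definite sign (monotone convergence), exactly as in the limiting argument in the proof of Theorem \ref{canTH}. Everything else is routine differentiation and bookkeeping of the dispersion factor $\sigma^2$.
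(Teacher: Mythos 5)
Your proposal is correct and follows essentially the same route as the paper's proof: differentiating the inverse relation $\nabla\psi^*=(\nabla\psi)^{-1}$ to obtain $V(\mu)\,\nabla^2\psi^*(\mu)=1$, then the Taylor expansion of $\psi^*$ with integral remainder to identify $D_{\psi^*}(b|\mu)$ with $-\sigma^2 Q(b;\mu)$, and finally the coercivity/log-concavity apparatus of Theorem \ref{nopeneq1}(a) for the density claim. Your extra care about the boundary case $b\in bd(\dom\psi^*)$ is a reasonable refinement that the paper glosses over, but it does not change the argument.
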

\begin{proof}
Since $\psi \in \L_1 \cap C^2 (int(\dom\psi))$, we have $\nabla\psi \circ \nabla\psi^*(\mu) = \mu$ for all $\mu \in int(\dom\psi^*)$. Thus $\nabla^2\psi(\theta) \nabla^2\psi^*(\mu) = 1$ where $\theta = \nabla\psi^*(\mu) \in int(\dom\psi)$. That is to say, we have 
\begin{equation}\label{2ndOrd}
V(\mu)\nabla^2\psi^*(\mu)=1
\end{equation} 
Consider the Taylor approximation of $\psi^*(b)$ with $b \in {\cal B} \subseteq \dom\psi^*$ at $\mu \in int(\dom\psi^*)$:
$
\psi^*(b) = \psi^*(\mu) + \nabla\psi^*(\mu)(b-\mu) + \int_{\mu}^b \nabla^2\psi^*(y)(b-y)dy.
$
From \eqref{2ndOrd}, we have
\begin{equation*}
D_{\psi^*}(b|\mu) = \int_\mu^b \nabla^2\psi^*(y) (b-y) dy = \int_\mu^b \frac{(b-y)}{V(y)} dy = - \sigma^2 Q(b;\mu)
\end{equation*}
By Theorem \ref{nopeneq1} (a), it is easy to see that $p_{Q}(b;\mu,\sigma^2) = \exp(Q(b;\mu))p_0(b,\sigma^2)$ is a log-concave density function with a base measure $p_0(b,\sigma^2)$ satisfying $\int_{\cal B} p_Q(b;\mu,\sigma^2)\nu(db) =1$. When $\dom\psi=int(\dom\psi)$, it becomes the regular $2$-LED model $p_Q(b;\theta,\sigma^2) = \exp(-d_{\psi^*}(b;\theta)/\sigma^2)p_0(b,\sigma^2)$ where $d_{\psi^*}(b;\theta) = D_{\psi^*}(b|\nabla\psi(\theta))$. 
\end{proof}
Theorem \ref{2led} shows the equivalence between  quasi-likelihood function and the regular $2$-LED model under the regularity condition \eqref{mvr2}, $\dom\psi$ is open, and $var(b)>0$. However, variance does not need to be always positive, even though cumulant function is strictly convex. For instance, $\psi(\theta) = \frac{3}{4}\left(\frac{1}{3}\theta\right)^4$ is the cumulant function of a degenerate Tweedie distribution ($\beta = 4/3$) with $\nabla\psi(0) = \nabla^2\psi(0)=0$. Actually, if $var(b) = \sigma^2\nabla^2\psi(\theta) \ge 0$ then the regularity condition $\psi^* \in \L_1 \cap C^2(int(\dom\psi^*))$ should be relaxed. In other words, $\psi^* \in \L_1 \cap C^1(int(\dom\psi^*))$ and thus the relation \eqref{2ndOrd} is not available anymore. The details are following.
\begin{lemma}\label{QuasiTweedie}
Consider a quasi-likelihood function $Q:\Omega_L \times \Omega_R \rightarrow \R_-$ with $V(\mu) = \mu^{2-\beta}$ and $var(b) = \sigma^2V(\mu)$. This is a scaled $\beta$-divergence $Q(b;\mu) = -D_{\beta}(b|\mu)/\sigma^2$. Here $\sigma^2>0$ is a constant. Then, $\dom V(\mu) = \{ \mu \in \R \;|\; V(\mu) \ge 0 \}$ becomes 
\begin{eqnarray}\label{domV}
\dom V(\mu) = 
\left\{ 
\begin{array}{l}
\R, \;\;\;\;\quad \beta \le 2,\; \beta \in \R_e\\
\R_+, \;\;\quad \beta \le 2\\ 
\R_{--}, \quad \beta > 2,\; \beta \in \R_e\\
\R_{++}, \quad \beta > 2 
\end{array}
\right.
\end{eqnarray}
and the corresponding (effective) domain $\Omega_L \times \Omega_R$ is summarized in Table \ref{table10}.  
\end{lemma}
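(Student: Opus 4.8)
The plan is to reduce everything to the arithmetic of the trichotomy $\R = \R_e \sqcup \R_o \sqcup \R_x$ together with the sign of the exponent $2-\beta$, and then to read off both the description of $\dom V$ and the table entries from the integral representation of $D_\beta$.

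First I would record the elementary closure facts that drive the whole argument. Using the equivalent characterization of $\R_e,\R_o,\R_x$ in terms of the parity of numerator and denominator of $\beta$ written in lowest terms (as in \cite{woo17}) and the fact that $2 = 2/1 \in \R_e$, a common-denominator computation shows that $\R_e$ is closed under subtraction, so $\beta \in \R_e \iff 2-\beta \in \R_e$; the same bookkeeping gives $\beta \in \R_o \iff 2-\beta \in \R_o$ and hence $\beta \in \R_x \iff 2-\beta \in \R_x$. Combined with the behaviour of the real power map $t \mapsto t^{p}$ — namely $t^{p}\ge 0$ for all real $t$ when $p \in \R_e$, $t^{p}$ has the sign of $t$ when $p \in \R_o$, and $t^{p}$ is real only for $t\ge 0$ when $p\in\R_x$ — this pins down where $V(\mu)=\mu^{2-\beta}$ is real-valued and non-negative. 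Intersecting with the requirement $V(\mu)\ge 0$ and deleting $\mu=0$ exactly when $2-\beta<0$ (i.e.\ $\beta>2$, where $\mu^{2-\beta}$ has a pole at the origin) produces the four displayed cases of \eqref{domV}: $\R$ when $\beta\le 2,\ \beta\in\R_e$; $\R_+$ for the remaining $\beta\le 2$; and the corresponding one-sided (resp.\ punctured) versions when $\beta>2$.

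For the effective domain $\Omega_L\times\Omega_R$ I would work directly from $D_\beta(b|u)=\int_u^b x^{\beta-2}(b-x)\,dx$, asking when this expression is simultaneously (i) a well-defined real number and (ii) non-negative. Point (i) fails when an endpoint forces a complex power, and when $\beta\le 1$ with $0$ lying strictly between $u$ and $b$ (non-integrable singularity of $x^{\beta-2}$); point (ii) is handled by the observation of \cite{woo17} that, whenever $x^{\beta-2}\ge 0$ along the whole segment joining $u$ and $b$ — governed by whether $\beta-2=-(2-\beta)$ lies in $\R_e$, $\R_o$, or $\R_x$ — one has $\int_u^b x^{\beta-2}(b-x)\,dx\ge 0$ regardless of the order of $u$ and $b$, because the sign of $b-x$ on the segment and the orientation of integration are correlated. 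The two exceptional exponents $\beta=0$ and $\beta=1$, where the closed forms involve $b/u$ and $\ln(b/u)$, I would treat by hand: the admissible region shrinks to same-sign (resp.\ strictly positive) pairs, and non-negativity follows from the one-variable convexity inequalities $t-\ln t-1\ge 0$ and $t\ln t-t+1\ge 0$. Splitting the remaining $\beta$ into the regimes $\beta<0$, $0<\beta<1$, $1<\beta<2$, $\beta=2$, $\beta>2$, each further refined by whether $\beta\in\R_e$, and tabulating the resulting constraints on $b$ and on $u$, yields Table \ref{table10}.

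The step I expect to be the real work is this second one: $\Omega_L\times\Omega_R$ is not a clean Cartesian product, because for the $\R_e$ exponents one must examine the sign of $x^{\beta-2}$ on a segment straddling the origin, and for $\beta<2$ the interplay between integrability at $0$ and the sign of $b-x$ has to be tracked endpoint by endpoint; getting the boundary behaviour at $\mu=0$ and at $b=0$ right in every regime, and checking that the $\Omega_R$-coordinate is consistent with \eqref{domV}, is where the bookkeeping is heaviest. The $\dom V$ statement itself, by contrast, is immediate once the closure of $\R_e,\R_o,\R_x$ under $\beta\mapsto 2-\beta$ is in place.
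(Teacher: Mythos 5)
Your proposal is correct in substance, but it takes a noticeably more self-contained route than the paper for the second half of the lemma. The paper's entire proof is two sentences: it declares \eqref{domV} trivial to compute (which matches your first paragraph --- the closure of $\R_e$ under $\beta\mapsto 2-\beta$ plus the sign behaviour of the power map is exactly the intended argument), and then obtains the quasi-likelihood column of Table \ref{table10} by a single set-theoretic operation: intersecting \eqref{domV} with the already-tabulated domain $\Omega_L\times\Omega_R$ of $\beta$-divergence from \cite{woo17}. You instead propose to re-derive that domain from scratch out of the integral representation $\int_u^b x^{\beta-2}(b-x)\,dx$, handling integrability at the origin, the segment-sign analysis for $\R_e$ exponents, and the exceptional cases $\beta=0,1$ by hand. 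That buys independence from \cite{woo17} at the cost of redoing its entire case analysis; it is the ``real work'' you correctly anticipate, but it is work the paper deliberately outsources. If you adopt the paper's organization, the only thing you need to verify is that the additional constraint distinguishing the quasi-likelihood domain from the $\beta$-divergence domain is precisely $V(x)=x^{2-\beta}\ge 0$ along the integration path, which (away from $x=0$) is equivalent to the condition $x^{\beta-2}\ge 0$ you already impose in your non-negativity step --- so your folding of the $V\ge 0$ requirement into the segment-sign analysis is legitimate and produces the same intersection. One small caution: your criterion ``(i) real-valued and (ii) non-negative'' alone characterizes $\Omega_L\times\Omega_R$ for $D_\beta$, not for $Q$; it is only because you additionally demand $x^{\beta-2}\ge 0$ on the whole segment that the extra shrinkage of the quasi-likelihood domain (visible in Table \ref{table10}, e.g.\ the absence of any $\beta>2$ rows and the difference at $\beta\le 0$) is captured, so make that intersection explicit rather than implicit.
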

\begin{proof}
It is trivial to compute \eqref{domV}. By the intersection of \eqref{domV} and the domain of $\beta$-divergence $D_{\beta}(b|\mu)$ in \cite{woo17}, we get the domain of a quasi-likelihood function with~\eqref{powerlaw} in Table \ref{table10}. 
\end{proof}
When $\beta > 2$, the domain of $Q(b;\mu)$ does not exist, unlike $\beta$-divergence in \cite{woo17}. However, in case $\beta \in (1,2) \cap \R_e$, $Q(b;\mu)$ with \eqref{powerlaw} exists and it has $\dom Q(b;\mu) = \R \times \R$. See \cite{eguchi01,samek13,woo19,woo19b} for applications of this region $\beta \in (1,2)$ in data classification with stabilized high-order hinge loss and  robust generalized distance between two probability distribution, i.e., robust KL-divergence. Now, we present Bregman-beta-guided log-concave density function having power variance function~\eqref{powerlaw} (see also Example \ref{TweedieX} and \cite[Page 336]{mccullagh89}). 
\begin{theorem}[regular $2$-LED for Tweedie]\label{2LEDr}
Let $b \in {\cal B} \subseteq \dom\Phi$, $\Phi \in \L_1  \cap C^1(int(\dom\Phi))$, and $\Psi \in \L_1 \cap C^2(int(\dom\Psi))$ with $\nabla\Psi(\theta) = \E[b]$. Consider Bregman-beta divergence $d_{\Phi}(b;\theta) = \Phi(b) + \Psi(\theta) - \inprod{b}{\theta}$. If $\beta \in [0,1]$ or $\beta \in (1,2] \cap \R_e$ then $p_{\Psi}(b;\theta,\sigma^2) = \exp(-d_{\Phi}(b;\theta)/\sigma^2)p_0(b,\sigma^2)$ is a log-concave density function with $\dom\Psi = int(\dom\Psi)$ and $\nabla^2\Psi(\theta)=V(\nabla\Psi(\theta)) \ge 0$. Here $\sigma^2>0$ is a constant and $p_0(b,\sigma^2)$ is a base measure satisfying $\int_{\cal B} p_{\Psi}(b;\theta,\sigma^2)\nu(db) =1$. The corresponding regular $2$-LED model with \eqref{powerlaw} is
$p_{\Psi}(b;\theta,\sigma^2) = p_1(b,\sigma^2) \exp\left(\frac{\inprod{b}{\theta} - \Psi(\theta)}{\sigma^2}\right)$
where $p_1(b,\sigma^2) = p_0(b,\sigma^2) \exp(-\Phi(b)/\sigma^{2})$. Additionally, $\dom\Psi$ and $\dom\Phi$ are summarized in Table \ref{table10}.
\end{theorem}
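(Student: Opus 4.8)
The plan is to fix $\Phi$ (and $\Psi=\Phi^*$) to be the power base function of \eqref{basefn}, i.e.\ $\Phi(x)=\frac{1}{\beta(\beta-1)}x^{\beta}$ with $\Psi(\theta)=\frac{1}{\beta}((\beta-1)\theta)^{\beta/(\beta-1)}$ for $\beta\neq 0,1$, together with the limiting pairs $(\Phi,\Psi)=(-\log x,\,-\log(-\theta)-1)$ at $\beta=0$ and $(\Phi,\Psi)=(x\log x,\,e^{\theta-1})$ at $\beta=1$, and then to verify the conclusions case by case over $\beta=0$, $\beta\in(0,1)$, $\beta=1$, $\beta\in(1,2)\cap\R_e$, and $\beta=2$. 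First I would confirm, directly from these formulas, that for $\beta\in[0,1]\cup((1,2]\cap\R_e)$ the stated hypotheses are realized: $\Phi$ is strictly convex, $C^1$ on $int(\dom\Phi)$, and steep at $bd(\dom\Phi)$ (vacuously when $\dom\Phi=\R$), hence $\Phi\in\L_1$, and by Theorem \ref{isoL} $\Psi=\Phi^*\in\L_1$ with $\nabla\Psi=(\nabla\Phi)^{-1}$; here $\dom\Phi$ is $\R$ for $\beta\in(1,2]\cap\R_e$, $\R_+$ for $\beta\in(0,1]$, and $\R_{++}$ for $\beta=0$, while $\dom\Psi$ is $\R$ for $\beta\in(1,2]\cap\R_e$ and for $\beta=1$, and $\R_{--}$ for $\beta\in[0,1)$. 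Reading off $\Psi$ then gives $\dom\Psi=int(\dom\Psi)$ in every case, and $\Psi\in C^2(int(\dom\Psi))$ because the conjugate exponent $\beta/(\beta-1)$ equals or exceeds $2$ there and, being in $\R_e$ (a short check with the $\R_e$-classification), its powers stay finite, nonnegative, and single-valued on all of $\R$. These $\dom\Phi,\dom\Psi$ are exactly the entries of Table \ref{table10} coming out of Lemma \ref{QuasiTweedie}.

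For the log-concave density and the $2$-LED structure I would lean on the earlier theorems rather than redo the coercivity argument. Since $\Phi\in\L_1$, Theorem \ref{legendreth} (2) makes $d_{\Phi}(b;\theta)=\Phi(b)+\Psi(\theta)-\inprod{b}{\theta}=D_{\Phi}(b\,|\,\nabla\Psi(\theta))$ coercive and nonnegative in $b$ for each $\theta\in\dom\Psi$ (using $\nabla\Psi(\theta)=(\nabla\Phi)^{-1}(\theta)\in int(\dom\Phi)$), so by Theorem \ref{Thcoercivity} (4) and Definition \ref{logconv}, $p_{\Psi}(b;\theta,\sigma^2)=\exp(-d_{\Phi}(b;\theta)/\sigma^2)p_0(b,\sigma^2)$ is a log-concave density for a suitable $p_0$. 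To promote this to the $1$-LED model with cumulant function $\Psi$, I would apply Theorem \ref{openeq} when $\dom\Phi$ is open ($\beta=0$ or $\beta\in(1,2]\cap\R_e$) and Theorem \ref{nopeneq1} (a) when $\dom\Phi$ is not open ($\beta\in(0,1]$, where $\dom\Phi=\R_+$); in both cases $\nabla\Psi(\theta)=\E[b]$ is the first cumulant, and since $\dom\Psi$ is open the model is regular. The exponential-dispersion reformulation is then algebra: expanding $d_{\Phi}$ and absorbing the $b$-only factor gives $p_{\Psi}(b;\theta,\sigma^2)=\exp((\inprod{b}{\theta}-\Psi(\theta))/\sigma^2)\,p_1(b,\sigma^2)$ with $p_1(b,\sigma^2)=p_0(b,\sigma^2)\exp(-\Phi(b)/\sigma^2)$, which is \eqref{EDx} with $\psi=\Psi$ and $K=2$.

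The substantive step is the second-cumulant identity $\nabla^2\Psi(\theta)=V(\nabla\Psi(\theta))\ge0$. Because $\Phi$ need not be $C^2$ at $0\in int(\dom\Phi)$ when $\beta\in(1,2)\cap\R_e$, I would not use $\nabla^2\Psi\,\nabla^2\Phi=1$ but differentiate the explicit inverse: $\nabla\Phi(x)=\frac{1}{\beta-1}x^{\beta-1}$ gives $\nabla\Psi(\theta)=((\beta-1)\theta)^{1/(\beta-1)}$, hence $\nabla^2\Psi(\theta)=((\beta-1)\theta)^{(2-\beta)/(\beta-1)}=(\nabla\Psi(\theta))^{2-\beta}=\mu^{2-\beta}=V(\mu)$ with $\mu=\nabla\Psi(\theta)=\E[b]$; the cases $\beta=0$ ($\nabla^2\Psi=1/\theta^2=\mu^2$), $\beta=1$ ($\nabla^2\Psi=e^{\theta-1}=\mu$), and $\beta=2$ ($V\equiv1$) are immediate. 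Nonnegativity holds since the exponent $(2-\beta)/(\beta-1)$ again lies in $\R_e$ for $\beta\in(1,2)\cap\R_e$, since $(\beta-1)\theta>0$ throughout $\dom\Psi=\R_{--}$ for $\beta\in[0,1)$, and trivially otherwise. I expect the real obstacle to be exactly the bookkeeping at the degenerate origin when $\beta\in(1,2)\cap\R_e$: there $\nabla\Psi(0)=\nabla^2\Psi(0)=0$, so the variance vanishes and $p_{\Psi}$ degenerates at $\theta=0$ (cf.\ Remark \ref{rmk1} (2) and Example \ref{TweedieX}), which is consistent with the inequality being $\ge0$ and with $\mu=0$ being precisely where $V(\mu)=\mu^{2-\beta}$ touches zero; keeping every fractional power single-valued is exactly what restricting $\beta$ to $[0,1]\cup((1,2]\cap\R_e)$, the $\R_e$-classification, and the domain accounting of Lemma \ref{QuasiTweedie} and Table \ref{table10} are there to guarantee.
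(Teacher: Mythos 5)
Your proposal is correct and follows essentially the same route as the paper's proof: establish coercivity of $d_{\Phi}(b;\theta)$ in $b$ via Theorem \ref{legendreth} (2) to get the log-concave density, then verify $\nabla^2\Psi(\theta)=[(\beta-1)\theta]^{\frac{2-\beta}{\beta-1}}=(\nabla\Psi(\theta))^{2-\beta}\ge 0$ by direct computation on the explicit power functions, noting (as the paper does) that $\Phi\notin C^2(int(\dom\Phi))$ for $\beta\in(1,2)\cap\R_e$ so the identity must come from differentiating $\nabla\Psi$ rather than from $\nabla^2\Psi\,\nabla^2\Phi=1$. Your additional scaffolding (the case-by-case Legendre check and the explicit appeal to Theorems \ref{openeq} and \ref{nopeneq1} (a) for the first cumulant) is elaboration of steps the paper leaves implicit, not a different argument.
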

\begin{proof}
Consider $\Psi$ with $\dom\Psi = int(\dom\Psi)$ and $\beta \ge 0$ (see \eqref{conditionLegendreD}). As noticed in Theorem \ref{legendreth} (2), $\forall\theta \in \dom\Psi$, $d_{\Phi}(b;\theta)$ is coercive with respect to $b \in \dom\Phi$. Thus $p_{\Psi}(b;\theta,\sigma^2) = \exp(-d_{\Phi}(b;\theta)/\sigma^2)p_0(b,\sigma^2)$ is a log-concave density function. Here, $p_0(b,\sigma^2)$ is a base measure satisfying $\int_{\cal B}p_{\Psi}(b;\theta,\sigma^2)\nu(db)=1$. Note that $\nabla^2\Psi(\theta) = [(\beta-1)\theta]^{\frac{2-\beta}{\beta-1}} = V(\nabla\Psi(\theta)) = (\exp_{2-\beta}(\theta))^{2-\beta}$. Then, when $\beta \le 2$, we have $V(\nabla\Psi(\theta)) \ge 0$. Actually, it corresponds to the case $\beta \in [0,1]$ or $\beta \in (0,2]\cap \R_e$. Last but not least, since $\nabla^2\Phi(b) = b^{\beta-2}$, when $\beta \in (1,2) \cap \R_e$, $\Phi \not\in C^2(int(\dom\Phi))$.  See Theorem \ref{corPsi} and Theorem \ref{legendre_beta} for more details.
\end{proof}
From Table \ref{table10}, we see that $\dom\Phi$ is non-open when $\beta \in (0,1]$. This corresponds to the compound Poisson-gamma distribution ($\beta \in (0,1)$) and Poisson distribution ($\beta = 1$) of the classic Tweedie distribution. 

\begin{remark}[non-regular $2$-LED for Tweedie]\label{2LEDnr}
Let $b \in {\cal B} \subseteq \dom\Phi$, $\Phi \in \L_1  \cap C^2(int(\dom\Phi))$, and $\Psi \in \L_1 \cap C^2(int(\dom\Psi))$ with $\nabla\Psi(\theta) = \E[b]$. Consider Bregman-Tweedie divergence $d_{\Psi}(\theta;b) = \Psi(\theta) + \Phi(b) - \inprod{b}{\theta}$ and $\sigma^2>0$ is a constant. If $\beta < 0$ then, by simple calculation, we get $\nabla^2\Psi(\theta)=V(\nabla\Psi(\theta)) > 0$ with $\dom\Psi = cl(\dom\Psi)$ and $\dom\Phi = int(\dom\Phi)$. From Theorem \ref{nopeneq1} (b), when $\theta \in int(\dom\Psi)$, $p_{\Psi}(b;\theta,\sigma^2) = \exp\left(-d_{\Psi}(\theta;b)/\sigma^2\right)p_0(b,\sigma^2) = \exp(\inprod{b}{\theta} - \Psi(\theta))p_1^{nr}(b,\sigma^2)$ is a non-full 2-LED model. Here $p_1^{nr}(b,\sigma^2) = p_0^{nr}(b,\sigma^2)\exp(-\Phi(b)/\sigma^2)$ is a base measure satisfying $\int_{\cal B} p_{\Psi}(b;\theta,\sigma^2)\nu(db)=1$. However, when $\theta \in \dom\Psi \setminus int(\dom\Psi)$ (i.e., $\theta=0$), $p_{\Psi}(b;0,\sigma^2) =  p_1^{nr}(b,\sigma^2)$ is not a log-concave density function.  In fact, $\lim_{\abs{b} \rightarrow \infty} \exp(-\Phi(b)/\sigma^2) = 1$. Therefore, to satisfy $\int_{\cal B} p_{\Psi}(b;0,\sigma^2) \nu(db) = 1$, $\lim_{\abs{b} \rightarrow \infty} p_0^{nr}(b,\sigma^2)=0$ is required. For instance, when $\beta=-1$ (inverse Gaussian distribution), we get $p_0^{nr}(b,\sigma^2) = (2\pi \sigma^2 b^3)^{-1/2}$ (see Example \ref{exIG}). Note that the case $\beta \in (0,-1) \cup (-1,-\infty)$ is known as the positive stable distribution. For more details, see \cite{jorgensen97}. Last but not least, quasi-probability distribution $p_Q(b;\mu,\sigma^2) = \exp(Q(b;\mu))p_0(b,\sigma^2)$ with $\beta<0$ and \eqref{powerlaw} is a non-full $2$-LED model. See Table \ref{table10} and Theorem \ref{2led} for more details.
\end{remark}

 \begin{table*}
\centering
\begin{tabular}{l|ll||l|lll}
\hline\hline
\textbf{Quasi-likelihood with \eqref{powerlaw}} & &&\textbf{Bregman-beta} (or \textbf{-Tweedie}) & & &  \\ \hline
 $1 < \beta \le 2$ & $\Omega_L=\R_+$ & $\Omega_R = \R_+$ & - & - & - \\ 
$1 < \beta \le 2, \beta \in \R_e$ & $\Omega_L=\R$ & $\Omega_R = \R$ & $1 < \beta \le 2, \beta \in \R_e$ & $\dom\Phi = \R$ & $\dom\Psi = \R$ \\ 
- & - & - & $\beta=1$ & $\dom\Phi = \R_+$ & $\dom\Psi = \R$ \\  
 $0 < \beta \le 1$ & $\Omega_L=\R_+$ & $\Omega_R = \R_{++}$ & $0 < \beta < 1$ & $\dom\Phi = \R_+$ & $\dom\Psi = \R_{--}$ \\ 
  $0<\beta<1, \beta \in \R_e$ & $\Omega_L = \R_-$ & $\Omega_R = \R_{--}$ & $0 < \beta < 1, \beta \in \R_e$ & $\dom\Phi = \R_-$ & $\dom\Psi = \R_{++}$ \\
 - & - & - & $\beta = 0$ & $\dom\Phi = \R_{++}$ & $\dom\Psi = \R_{--}$  \\ \hline    
$\beta < 0$ & $\Omega_L = \R_{++}$ &$\Omega_R = \R_{++}$ & $\beta < 0$ & $\dom\Phi = \R_{++}$ & $\dom\Psi = \R_{-}$  \\  
${ \beta  \le  0}, \beta \in \R_e$ & $\Omega_L = \R_{--}$ & $\Omega_R = \R_{--}$ & ${\beta <  0}, \beta \in \R_e$ & $\dom\Phi = \R_{--}$ & $\dom\Psi = \R_{+}$  \\ \hline\hline
  \end{tabular}
\caption{A comparison of the domain of a quasi-likelihood function with power variance function~\eqref{powerlaw} and that of  Bregman-beta divergence ($\beta \ge 0$) (or of Bregman-Tweedie divergence ($\beta <0$)) with $V(\mu) \ge 0$. It is worth mentioning about the region $\beta<0$. In this region,
 quasi-likelihood function with~\eqref{powerlaw} corresponds to the non-full $2$-LED model. However, the $2$-LED model, which is based on  Bregman-Tweedie divergence, is a non-regular exponential dispersion model. 
\label{table10}
}
\end{table*}
   
\subsection{The K-LED model with power variance function~\eqref{powerlaw}\label{sec4}}
This Section explores the K-LED model with power variance function~\eqref{powerlaw}. The K-LED model with \eqref{powerlaw} and analytic cumulant function becomes the classic Tweedie exponential dispersion model in an extended domain. See also~\cite{bar-lev86}.

\begin{table*}[h]
\centerline{\begin{tabular}{l||lcl|c|cc}
\hline\hline
 & &  $\bm \beta \bm > \bm 1$ ($\beta \in \R_e$)  & & $\bm \beta \bm = \bm 1$ & $\bm \beta \bm < \bm 1$ ($\beta \in \R_e$) & $\bm \beta \bm < \bm 1$ ($\beta \not\in \R_e$) \\ \hline\hline 
 $\dom (\nabla \Psi(\theta))$ &  &   & $\R$ ($\beta>1$)  & $\R$ & $\R_{++}/\R_{--}$ & $\R_{--}$  \\ \hline
 $\dom(\nabla^{2}\Psi(\theta))$ & $\R_{++} / \R_{--}$ ($\beta>2$) & $\R$ ($\beta = 2$) & $\R$ ($2>\beta>1$)  & $\R$ & $\R_{++}$/$\R_{--}$ & $\R_{--}$ \\ \hline
 $\dom(\nabla^{3}\Psi(\theta))$ & $\R_{++} / \R_{--}$ ($\beta>\frac{3}{2}$) & - & $\R$ ($\frac{3}{2}>\beta>1$)  & $\R$ & $\R_{++}$/$\R_{--}$ & $\R_{--}$ \\ \hline
 $\dom(\nabla^{4}\Psi(\theta))$ & $\R_{++} / \R_{--}$ ($\beta>\frac{4}{3}$) & $\R$ ($\beta=\frac{4}{3}$) & $\R$ ($\frac{4}{3}>\beta>1$)  & $\R$ & $\R_{++}$/$\R_{--}$ & $\R_{--}$ \\ \hline 
  $\dom(\nabla^{5}\Psi(\theta))$ & $\R_{++} / \R_{--}$ ($\beta>\frac{5}{4}$) & - & $\R$ ($\frac{5}{4}>\beta>1$)  & $\R$ & $\R_{++}$/$\R_{--}$ & $\R_{--}$ \\ \hline \hline
$\dom(\nabla^{2k}\Psi(\theta))$ & $\R_{++} / \R_{--}$ ($\beta>\frac{2k}{2k-1}$)  & $\R$ ($\beta=\frac{2k}{2k-1}$) & $\R$ ($\frac{2k}{2k-1}>\beta>1$)  & $\R$ & $\R_{++}$/$\R_{--}$ & $\R_{--}$ \\ \hline
$\dom(\nabla^{2k+1}\Psi(\theta))$ & $\R_{++} / \R_{--}$	 ($\beta>\frac{2k+1}{2k}$)  & - & $\R$ ($\frac{2k+1}{2k}>\beta>1$)  & $\R$ & $\R_{++}$/$\R_{--}$ & $\R_{--}$ \\ \hline
\end{tabular}}
\caption{The domain of $k$-th cumulant function of $\Psi$~\eqref{basefnD} depending on selection of $k$. In particular, when $\beta= \left(1+\frac{1}{k-1}\right) \cap \R_e$, $\nabla^{k}\Psi(\theta) = \eta(k,\beta)$ is a constant and thus the domain of it is simply $\R$. Here, $\eta(k,\beta) = 1(2-\beta)(3-2\beta)...(k-1 -(k-2)\beta).$ As we increase $k$, the region $1<\beta<1+\frac{1}{k-1}$ becomes empty set and thus the classic Tweedie distribution does not exists when $\beta > 1$, except $\beta=2$. See Theorem \ref{kthcu} for more details. 
}\label{table4}
\end{table*}

\begin{table*}[h]
\centerline{\begin{tabular}{c|l|c||cc||ccc||c}
\hline\hline
    &               &                   &                        & {\bf $1$-LED} with \eqref{powerlaw}                    &  &  &  & {\bf Tweedie}\\ \cline{2-9}
{\bf Distribution} & $\beta$  & ${\cal B}$  & $\dom \Phi$ & $\dom \Psi$ & $\dom \nabla\Psi$ & $\dom(\nabla^2\Psi)$ & $\dom(\nabla^{K}\Psi)$ & $\dom\Psi$  \\ \hline\hline
-$^{\bm 1}$ & ${ 2 < \beta}$, $\beta \in \R_e$   & $\R$ & $\R$ & $\R$ & $\R$ & - & - & - \\  \hline 
Gaussian & ${ \beta  =  2}$   & $\R$ & $\R$ & $\R$ & $\R$ & $\R$ & - & $\R$ \\  \hline 
-$^{\bm 2}$ & ${\frac{K}{K-1} <  \beta < 2}$, $\beta \in \R_e$   & $\R$ & $\R$ & $\R$ & $\R$ & $\R$ & - & - \\  \hline  
-$^{\bm 2}$ & ${\frac{K}{K-1} =  \beta}$, $\beta \in \R_e$   & $\R$ & $\R$ & $\R$ & $\R$ & $\R$ & $\R$ & - \\  \hline
-$^{\bm 2}$ & ${1< \beta < \frac{K}{K-1}}$, $\beta \in \R_e$   & $\R$ & $\R$ & $\R$ & $\R$ & $\R$ & $\R$ & - \\  \hline
Poisson  & $\beta =  1$ & $\mathbb{Z}_+$ & $\R_{+}$& $\R$ & $\R$ & $\R$ & $\R$ & $\R$ \\  \hline 
Compound  & $ 0 < \beta <  1$, $\beta \not\in \R_e$ & $\R_+$  & $\R_+$ & $\R_{--}$ & $\R_{--}$ & $\R_{--}$ & $\R_{--}$ & $\R_{--}$ \\
Poisson-Gamma & $ 0  <  \beta <  1$, $\beta \in \R_e$ & $\R_+$ & $\R_+/\R_-$ & $\R_{--}/\R_{++}$ & $\R_{--}/\R_{++}$ & $\R_{--}/\R_{++}$ & $\R_{--}/\R_{++}$ & $\R_{--}/\R_{++}$ \\  \hline
Gamma  & $\beta = 0$ & $\R_{++}$  & $\R_{++}$ & $\R_{--}$ & $\R_{--}$ & $\R_{--}$ & $\R_{--}$ & $\R_{--}$ \\ \hline
Inverse Gaussian  & $ \beta = {-1}$ & $\R_{++}$ & $\R_{++}$ & $\R_{-}$ & $\R_{--}$ & $\R_{--}$ & $\R_{--}$ & $\R_{-}$ \\ \hline
Positive stable  & ${\beta < 0}$, ${\beta \not= -1}$, $\beta \not\in \R_e$ & $\R_{++}$ & $\R_{++}$ & $\R_{-}$ & $\R_{--}$ & $\R_{--}$ & $\R_{--}$ & $\R_{-}$ \\
 & ${ \beta < 0}$, ${ \beta \not= -1}$, $\beta \in \R_e$ & $\R_{++}$ & $\R_{++}/\R_{--}$ & $\R_{-}/\R_{+}$ & $\R_{--}/\R_{++}$ & $\R_{--}/\R_{++}$ & $\R_{--}/\R_{++}$ & $\R_{-}/\R_{+}$ \\ \hline 
\end{tabular}}
\caption{A characterization of the K-LED model with \eqref{powerlaw}. $^{\bm 1}$($K=1$): It is denoted as the $1$-LED model with \eqref{powerlaw}. This is induced from Bregman-beta divergence~\eqref{betaBregman}. See Example \ref{TweedieX} for more details. $^{\bm 2}$($2 < K < \infty$): These are not classic probability distributions having infinitely many cumulant functions. However, when $\beta \in (1,\frac{K}{K-1}]$, it is degenerate K-LED models with \eqref{powerlaw}. In fact, we have $\nabla\Psi(0) = \nabla^2\Psi(0) = 0$. Notice that, when $\beta \in [0,1]$, the classic Tweedie distribution is derived from Bregman-beta divergence~\eqref{betaBregman} and, when $\beta<0$, the classic non-regular Tweedie distribution is derived from Bregman-Tweedie-divergence~\eqref{tweedieBregman}. 
}\label{table5}
\end{table*}
\begin{theorem}\label{kthcu}
Let $K \ge 2$, $b \in {\cal B} \subseteq \dom \Psi^*$ be a random variable (or an observation), and  $\Psi \in \L_1 \cap C^K(int(\dom\Psi))$ where $\dom\Psi$ depends on $\beta$ and $\Psi$~\eqref{basefnD}. Let $\beta \in (-\infty,1] \cup \{(1,2]\cap \R_e\}$, then we have K-LED with \eqref{powerlaw}:
\begin{equation}\label{pX}
p_{\Psi}(b;\theta,\sigma^2) = \exp\left(\frac{\inprod{b}{\theta}-\Psi(\theta)}{\sigma^2}\right)p_1(b,\sigma^2)
\end{equation}
where $\sigma^2>0$ is a dispersion parameter and  $p_1(b,\sigma^2) = p_0(b,\sigma^2)\exp(-\Phi(b)/\sigma^2)$ is a base measure satisfying $\int_{{\cal B}} p_{\Psi}(b;\theta,\sigma^2)\nu(db) = 1$. The cumulant function $\Psi$ is analytic on
\begin{equation}\label{domTW}
int(\dom\Psi) =   
\left\{
\begin{array}{l}
\R, \qquad\quad\quad\;\; \beta = 1, 2 \\ 
\R_{++} / \R_{--}, \;\;\;\; \beta<1, \; \beta \in \R_e \setminus \{ 0 \}\\
\R_{--}, \qquad\quad\;\; \beta = 0, \;\beta<1, \; \beta \not\in \R_e\\
\end{array}
\right.
\end{equation}
\end{theorem}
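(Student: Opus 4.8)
The plan is to recognize \eqref{pX} as an instance of the Bregman-divergence-guided density of Theorem \ref{2LEDr} (when $\beta\in[0,1]\cup((1,2]\cap\R_e)$, built from Bregman-beta divergence $d_{\Phi}(b;\theta)=\Phi(b)+\Psi(\theta)-\inprod{b}{\theta}$) and of Remark \ref{2LEDnr} (when $\beta<0$, where the same expression reads as Bregman-Tweedie divergence $d_{\Psi}(\theta;b)$ and the model is only non-full), using the explicit pair $\Phi(b)=\frac{1}{\beta(\beta-1)}b^{\beta}$, $\Psi(\theta)=\frac{1}{\beta}((\beta-1)\theta)^{\frac{\beta}{\beta-1}}$ of Example \ref{TweedieX}, together with its limiting forms $\Psi(\theta)=\frac12\theta^2$ at $\beta=2$ and $\Psi(\theta)=\exp(\theta)$ at $\beta=1$. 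Two things must then be verified: (i) \eqref{pX} is a genuine $K$-LED model satisfying \eqref{powerlaw}; and (ii) the interior of $\dom\Psi$ and the analyticity of $\Psi$ on it are as in \eqref{domTW}. For (i) I would first record, citing Theorem \ref{corPsi} and Theorem \ref{legendre_beta}, that $\Psi\in\L_1$ on the domains listed in Table \ref{table10}; on each regime appearing in \eqref{domTW} this domain is an open half-line (or all of $\R$) on which $\Psi$ is a power, a logarithm, or an exponential of a strictly positive affine function, hence lies in $C^K(int(\dom\Psi))$ for every $K$.

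For the density itself: since $d_{\Phi}(b;\theta)=D_{\Phi}(b\,|\,\nabla\Psi(\theta))$ and $\nabla\Psi(\theta)\in int(\dom\Phi)$ by the isomorphism of Theorem \ref{isoL} whenever $\theta\in int(\dom\Psi)$, Theorem \ref{legendreth}(2) yields coercivity of $d_{\Phi}(\cdot;\theta)$ in $b$ for all such $\theta$; this step is essential precisely because $\Phi$ itself is in general not coercive (e.g.\ $\Phi(b)=-\log b$ at $\beta=0$, and $\Phi(b)\propto b^{\beta}\to 0$ as $b\to\infty$ when $\beta<0$). Theorem \ref{Thcoercivity}(4) then makes $\int_{\cal B}\exp(-d_{\Phi}(b;\theta)/\sigma^2)\nu(db)$ finite, so normalizing produces a log-concave density, and writing $\exp(-d_{\Phi}(b;\theta)/\sigma^2)=\exp((\inprod{b}{\theta}-\Psi(\theta))/\sigma^2)\exp(-\Phi(b)/\sigma^2)$ and absorbing the last factor into $p_1(b,\sigma^2)=p_0(b,\sigma^2)\exp(-\Phi(b)/\sigma^2)$ gives \eqref{pX}. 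The first cumulant $\nabla\Psi(\theta)=\E[b]$ follows from Theorem \ref{canTH} via Theorem \ref{openeq} when $\dom\Psi$ is open ($\beta\in[0,1]\cup((1,2]\cap\R_e)$) and via Theorem \ref{nopeneq1}(b)/Remark \ref{2LEDnr} in the non-full case $\beta<0$ (valid for $\theta\in int(\dom\Psi)$). The power-variance relation \eqref{powerlaw}--\eqref{mvr2} is a one-line computation: $\nabla\Psi(\theta)=((\beta-1)\theta)^{1/(\beta-1)}$, hence $\nabla^2\Psi(\theta)=((\beta-1)\theta)^{(2-\beta)/(\beta-1)}=(\nabla\Psi(\theta))^{2-\beta}=V(\nabla\Psi(\theta))$. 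Finally, because $\Psi\in C^K(int(\dom\Psi))$ with $int(\dom\Psi)$ open, the cumulant generating function $K_{\Psi}(x;\theta,\sigma^2)=\sigma^{-2}(\Psi(\theta+x\sigma^2)-\Psi(\theta))$ is $C^K$ in $x$ near $0$, so $p_{\Psi}$ has $K$ well-defined cumulants and is a $K$-LED model.

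It remains to establish \eqref{domTW}, which I would do by cases on $\beta$. For $\beta=2$ and $\beta=1$, $\Psi(\theta)=\frac12\theta^2$ and $\Psi(\theta)=\exp(\theta)$ are entire with $\dom\Psi=\R$. For $\beta=0$, $\Psi(\theta)=-\log(-\theta)-1$ is real-analytic on $\R_{--}$. For $\beta<1$ with $\beta\notin\R_e$, the exponent $\beta/(\beta-1)$ is not of the form (integer)/(odd integer), so $((\beta-1)\theta)^{\beta/(\beta-1)}$ is real only when $(\beta-1)\theta>0$, i.e.\ (as $\beta-1<0$) when $\theta<0$; thus $int(\dom\Psi)=\R_{--}$, and there $\Psi$ is a constant multiple of a real-analytic power of the strictly positive function $(\beta-1)\theta$, hence analytic. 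For $\beta<1$ with $\beta\in\R_e\setminus\{0\}$, $\beta/(\beta-1)$ is again of the form (even)/(odd), so $((\beta-1)\theta)^{\beta/(\beta-1)}$ is defined for $\theta$ of either sign, but $\nabla\Psi(\theta)=((\beta-1)\theta)^{1/(\beta-1)}$ jumps between $+\infty$ and $-\infty$ at $\theta=0$, so $\Psi$ is not $C^1$ — hence not Legendre — on any interval straddling the origin; the largest convex set on which $\Psi$ is Legendre is therefore $\R_{++}$ or $\R_{--}$, on either of which $\Psi$ is strictly convex, steep at the endpoint $\theta=0$, and analytic. Collecting these subcases gives \eqref{domTW}. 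For the remaining admissible range $\beta\in(1,2)\cap\R_e$ the same argument still gives a valid $K$-LED via \eqref{pX} with $\dom\Psi=\R$, but there $\nabla\Psi(0)=\nabla^2\Psi(0)=0$ and $\Psi$ is merely $C^K$ at the origin, which is why that case is absent from \eqref{domTW}.

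I expect the main obstacle to lie in this last case analysis: deciding, from the arithmetic type of $\beta$ (membership in $\R_e$, $\R_o$ or $\R_x$) and the sign of $\beta-1$, which half-line or full line is the maximal convex set on which $((\beta-1)\theta)^{\beta/(\beta-1)}$ is simultaneously real-valued, strictly convex, $C^1$ with a steep boundary, and analytic on its interior — and then checking this is consistent with the $\dom\Phi$ and $\dom\Psi$ columns of Table \ref{table10}. Everything else is a routine assembly of Theorems \ref{isoL}, \ref{Thcoercivity}, \ref{legendreth}, \ref{canTH}, \ref{openeq}, \ref{nopeneq1}, \ref{2LEDr} and Remark \ref{2LEDnr}.
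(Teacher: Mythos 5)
Your overall architecture matches the paper's: the density \eqref{pX} and the power-variance relation are delegated to Theorem \ref{2LEDr} and Remark \ref{2LEDnr} exactly as the paper does, and the remaining work is a case analysis in $\beta$ for \eqref{domTW}. Where you diverge is in how that case analysis is run. The paper computes the explicit $k$-th derivative $\nabla^{k}\Psi(\theta)=\eta(k,\beta)[(\beta-1)\theta]^{H(\beta,k)}$ with $H(\beta,k)=\frac{1}{\beta-1}-(k-1)$ and classifies $\dom(\nabla^{k}\Psi)$ order by order (this is what populates Table \ref{table4} and pins down exactly which finite $K$ is attainable for $\beta\in(1,2)\cap\R_e$), whereas you argue directly on $\Psi$ itself via the arithmetic type of the exponent $\beta/(\beta-1)$. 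Your route is shorter and suffices for the half-line cases $\beta<1$ and for $\beta=0,1,2$, which you handle correctly.

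There is, however, one genuine gap, and it sits precisely where the paper's derivative-tower analysis earns its keep: the exclusion of $\beta\in(1,2)\cap\R_e$, $\beta\not=2$, from \eqref{domTW}. You justify this by saying $\Psi$ is ``merely $C^K$ at the origin,'' but that is false for the subfamily $\beta=1+\frac{1}{k-1}=\frac{k}{k-1}$ with $k=4,6,8,\dots$ (e.g.\ $\beta=4/3$ gives $\Psi(\theta)=\frac{3}{4}(\theta/3)^4$), where the exponent $\beta/(\beta-1)=k$ is an integer, $\Psi$ is a polynomial, and hence $\Psi$ is analytic on all of $\R$. Finite differentiability cannot be the reason these $\beta$ are absent from \eqref{domTW}. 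The paper excludes them on different grounds: for such $\beta$ one has $\nabla^{k}\Psi\equiv\eta(k,\beta)$ constant and $\nabla^{m}\Psi\equiv 0$ for all $m>k$, and by the Marcinkievicz theorem \cite{lukacs58} a polynomial cumulant function of degree $k>2$ cannot correspond to any probability distribution; only $k=2$ (normal) survives, which is why \eqref{domTW} lists $\R$ only for $\beta=1,2$. Without this (or an equivalent) argument, your proof establishes analyticity of $\Psi$ on the sets listed in \eqref{domTW} but does not justify \eqref{domTW} as the stated characterization, since it leaves unexplained why the analytic polynomial cases $\beta=\frac{k}{k-1}\in\R_e$, $k>2$, are omitted.
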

\begin{proof}
From Theorem \ref{2LEDr} and Remark \ref{2LEDnr}, when $\beta \in (-\infty,1]$ and $\beta \in (1,2] \cap \R_e$, the $2$-LED model with~\eqref{pX} and $\dom\Psi$ in Table \ref{table10}, satisfies  power variance function~\eqref{powerlaw}. Consider $\nabla\Psi(\theta) = \exp_{2-\beta}(\theta) = \mu$ where $\theta \in int(\dom\Psi) = \dom(\nabla\Psi)$. For completeness, we summarize $\dom(\nabla\Psi)$ in Table \ref{table4}.  Assume that $k \ge 2$ and the  $k$-th cumulant of  K-LED with \eqref{powerlaw} in \eqref{pX}:
\begin{equation}\label{nthC}
\nabla^{k} \Psi(\theta) =  \eta(k,\beta)[(\beta-1)\theta]^{H(\beta,k)} 
\end{equation}
where $\beta \not= 1$, $\eta(k,\beta) = 1(2-\beta)(3-2\beta)...(k-1 -(k-2)\beta)$, and $H(\beta,k) = \frac{1}{\beta-1}-(k-1)$. The following classification is useful for deciding the domain of $k$-th cumulant with $\beta \not= 1$:
\begin{itemize}
\item $\beta \in \R_e$ and $k$ is even:\;
Since $\frac{1}{\beta-1} \in \R_o$ and $k-1$ is odd, 
$
H(\beta,k)  \in \R_e.
$
\item $\beta \in \R_e$ and $k$ is odd:\;
Since $\frac{1}{\beta-1} \in \R_o$ and $k-1$ is even, 
$
H(\beta,k)  \in \R_o.
$
\item $\beta \not\in \R_e$: 
$
H(\beta,k) \in \R_x.
$ 
\end{itemize}
Now, the domain of $k$-th cumulant \eqref{nthC} ($k \ge 2$) is classified as
\begin{itemize}
\item $\beta = 1$: Since $\Psi(\theta)=\exp(\theta)$,  we have $\dom(\nabla^{k}\Psi(\theta)) = \R$, irrespective of $k$.
\item $\beta<1$: $H(\beta,k)<0$, irrespective of the choice of $k$, and thus we have $\dom (\nabla^{k}\Psi(\theta)) = \R_{--}$. If $\beta \in \R_e \setminus \{ 0 \}$ then $\dom (\nabla^{k}\Psi(\theta)) = \R_{++}$ is also possible.
\item $\beta \in (1,1+\frac{1}{k-1})$ and $k \ge 2$: If $\beta \in \R_e$ then $H(\beta,k) > 0$ and $H(\beta,k) \in \R \setminus \R_x$. Therefore, $\dom(\nabla^{k}\Psi(\theta)) = \R.$
\item $\beta > 1 + \frac{1}{k-1}$ and $k \ge 2$: $H(\beta,k)<0$ and $\beta-1>0$. Therefore, $\dom(\nabla^k \Psi(\theta)) = \R_{++}$. If $\beta \in \R_e$ then $H(\beta,k) \in \R \setminus \R_x$ and $\dom(\nabla^{k}\Psi(\theta)) = \R_{--}$ is also possible. However, this case does not preserve $\dom\Psi = \R$.
\item $\beta = \left(1 + \frac{1}{k-1}\right) \cap \R_e$ and $k \ge 2$: $H(\beta,k)=0$ and thus 
$
\nabla^{k}\Psi(\theta) = \eta(k,\beta).
$ 
That is, $\nabla^{k}\Psi(\theta)$ does not depend on parameter $\theta$. Hence, $\dom(\nabla^{k}\Psi(\theta)) = \R.$  For all $m>k$, if we set $\beta=1+ 1/(k-1)$ then $\eta(m,\beta) = 0$ and thus
$
\nabla^{m}\Psi(\theta) = 0. 
$ That is, the higher moments ($\forall m>k$) do not exist. In fact, when $k=2$, we have $\beta=2$ and thus $\nabla^{2}\Psi(\theta) = 1$. It corresponds to the normal distribution. Unfortunately, when $2<k<+\infty$, we do not have corresponding probability distributions, due to the Marcinkievicz Theorem~\cite{lukacs58}. Note that when $\beta = \left(1 + \frac{1}{k-1}\right) \cap (\R \setminus \R_e)$, $\Psi$ is not Legendre.
\end{itemize}
Then, we have the domain of the classic Tweedie distribution, described in \eqref{domTW}. 
\end{proof}
We summarize the domains of K-LED in Table \ref{table5}. It is worth mentioning that the K-LED model~\eqref{pX} with $\beta = \left(1 + \frac{1}{K-1}\right) \cap \R_e$ (i.e., $K=2,4,6,8,10,...$). In fact, we have 
\begin{equation}\label{psiK}
\Psi(\theta) = \frac{K-1}{K}\left(\frac{\theta}{K-1} \right)^K
\end{equation}
If $K>2$ then $\nabla\Psi(0) = \nabla^2\Psi(0) = 0$ and thus \eqref{psiK} becomes the cumulant function of degenerate K-LED models, which are in the class of the parameterized log-concave density function. Note that the $2$-LED model with \eqref{psiK} (i.e., $\beta=2$) is the only classic probability distribution (normal distribution) having finite cumulants in Tweedie distribution. Additionally, as noticed in \cite{woo19b}, the regular Legendre transformation of Bregman-Tweedie divergence with \eqref{psiK} becomes an extended logistic loss function based on $\Psi$~\cite{woo19b}:
\begin{equation}\label{unh}
{\cal L}_{\Psi}(c,x) = \argmax_{z \in dom\Psi}\; \inprod{c}{z} - D_{\Psi}(z|x) = \nabla\Phi(c + \nabla\Psi(x))
\end{equation}
When $K=2$, \eqref{unh} is the unhinge loss function~\cite{rooyen15}. See also \cite{woo19} for the Perceptron-augmented extended logistic loss function. The following Theorem summarizes the mean parameter estimation of the K-LED model via Bregman divergence associated with $\Psi$ (or $\Psi^* = \Phi$).
\begin{theorem}\label{th:co}
Let $\{b_i\}_{i=1}^M$ be the observations. Here $b_i \in {\cal B} \subseteq \dom\Phi$, $b_{avg} = \frac{1}{M}\sum_{i=1}^M b_i \in \dom\Phi$, and $K \ge 1$. Consider the following maximization problem of the K-LED model with \eqref{powerlaw} in~\eqref{pX}. 
\begin{equation}\label{kltedmax}
\hat{\theta}_{\beta} = \argmax_{\theta \in dom\Psi} \sum_{i=1}^M \log p_{\Psi}(b_i; \theta,\sigma^2)
\end{equation}
Then, from Theorem \ref{canTH}, we have a unique solution
$\hat{\theta}_{\beta} = \ln_{2-\beta}\left(b_{avg} \right) \in \R_{\pm\infty}$
(an extended-valued real number system).
\end{theorem}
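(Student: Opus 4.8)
The plan is to strip \eqref{kltedmax} down to the canonical-divergence minimization that Theorem \ref{canTH} already solves. First I would expand the log-likelihood: from \eqref{pX}, $\log p_{\Psi}(b_i;\theta,\sigma^2) = \sigma^{-2}(\inprod{b_i}{\theta}-\Psi(\theta)) + \log p_1(b_i,\sigma^2)$, and since $\sigma^2>0$ while $\sum_i \log p_1(b_i,\sigma^2)$ does not depend on $\theta$, the maximizer in \eqref{kltedmax} coincides with the maximizer of $\inprod{b_{avg}}{\theta}-\Psi(\theta)$, equivalently the minimizer of $\Psi(\theta)-\inprod{b_{avg}}{\theta}$, over $\theta\in\dom\Psi$.

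Next I would add the $\theta$-independent constant $\Phi(b_{avg})$, which is finite because $b_{avg}\in\dom\Phi$, so that the objective becomes the canonical divergence $d_{\Phi}(b_{avg};\theta)=\Phi(b_{avg})+\Psi(\theta)-\inprod{b_{avg}}{\theta}$ associated with the Legendre pair $(\Phi,\Psi)$, i.e. $\Psi=\Phi^{*}$. Legendreness of $\Phi$ and $\Psi$ over the admissible range $\beta\in(-\infty,1]\cup\{(1,2]\cap\R_e\}$ is exactly what Theorem \ref{2LEDr}, Remark \ref{2LEDnr} and Theorem \ref{kthcu} supply. Invoking Theorem \ref{canTH} with $f=\Phi$ then yields, as the unique minimizer over $\theta\in int(\dom\Psi)=int(\dom\Phi^{*})$, the point $\nabla\Phi(b_{avg})$ when $b_{avg}\in int(\dom\Phi)$ and $\pm\infty$ when $b_{avg}\in\dom\Phi\setminus int(\dom\Phi)$; uniqueness is inherited from the strict convexity of $\Psi$ on $int(\dom\Psi)$. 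The steepness of $\Psi$ guarantees that, even when $\dom\Psi$ fails to be open (as in the non-regular cases $\beta<0$), the supremum over all of $\dom\Psi$ is still attained in the interior, so no spurious boundary maximizer is introduced.

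Finally I would identify the minimizer with $\ln_{2-\beta}(b_{avg})$. By Theorem \ref{isoL}, $\nabla\Phi=(\nabla\Psi)^{-1}=\nabla\Psi^{*}$ on the respective interiors, and the K-LED construction of Section \ref{sec4} gives $\nabla\Psi(\theta)=\exp_{2-\beta}(\theta)$, hence $\nabla\Phi=\ln_{2-\beta}$ on $int(\dom\Phi)$. Extending $\ln_{2-\beta}$ to $bd(\dom\Phi)$ through the same limiting argument used inside the proofs of Theorem \ref{canTH} and Theorem \ref{nopeneq1}, it takes the value $-\infty$ (or $+\infty$) there — for instance $\ln_{1}(0)=-\infty$ in the Poisson case $\beta=1$ — which matches the $\pm\infty$ branch. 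Combining, $\hat{\theta}_{\beta}=\ln_{2-\beta}(b_{avg})\in\R_{\pm\infty}$ in every case, which is the claim.

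I expect the only real difficulty to be the case bookkeeping at the boundary: one must check, row by row against Table \ref{table5}, that $\dom\Phi$, $\dom\Psi$ and the sign conventions for the extended functions $\exp_{2-\beta}$ and $\ln_{2-\beta}$ are mutually consistent so that the extended-valued identification is uniform across the regular, degenerate ($1<\beta<\frac{K}{K-1}$, $\beta\in\R_e$) and non-regular ($\beta<0$) regimes, and that $\Psi=\Phi^{*}$ genuinely holds as a conjugate pair (not merely on interiors) given the $\R_e/\R_o/\R_x$ handling of fractional powers. Everything else is a direct appeal to the machinery already in place.
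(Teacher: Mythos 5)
Your proposal is correct and follows essentially the same route the paper intends: the paper gives no separate proof for Theorem \ref{th:co}, deferring entirely to Theorem \ref{canTH} (and the computation already carried out in Example \ref{ex8}), and your reduction of the log-likelihood to the canonical divergence $d_{\Phi}(b_{avg};\theta)$ followed by the identification $\nabla\Phi=\ln_{2-\beta}$ is exactly that argument spelled out. The additional care you take with the boundary/steepness cases and the $\R_e$ bookkeeping is consistent with Tables \ref{table10} and \ref{table5} and does not diverge from the paper's approach.
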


\subsection{The K-LED model with the convex extended logistic loss function\label{berpoi}}
This Section introduces a special K-LED model, of which the cumulant function is the convex extended logistic loss function~\cite{woo19}.

\begin{figure*}[t]
\centering
\includegraphics[width=6.5in]{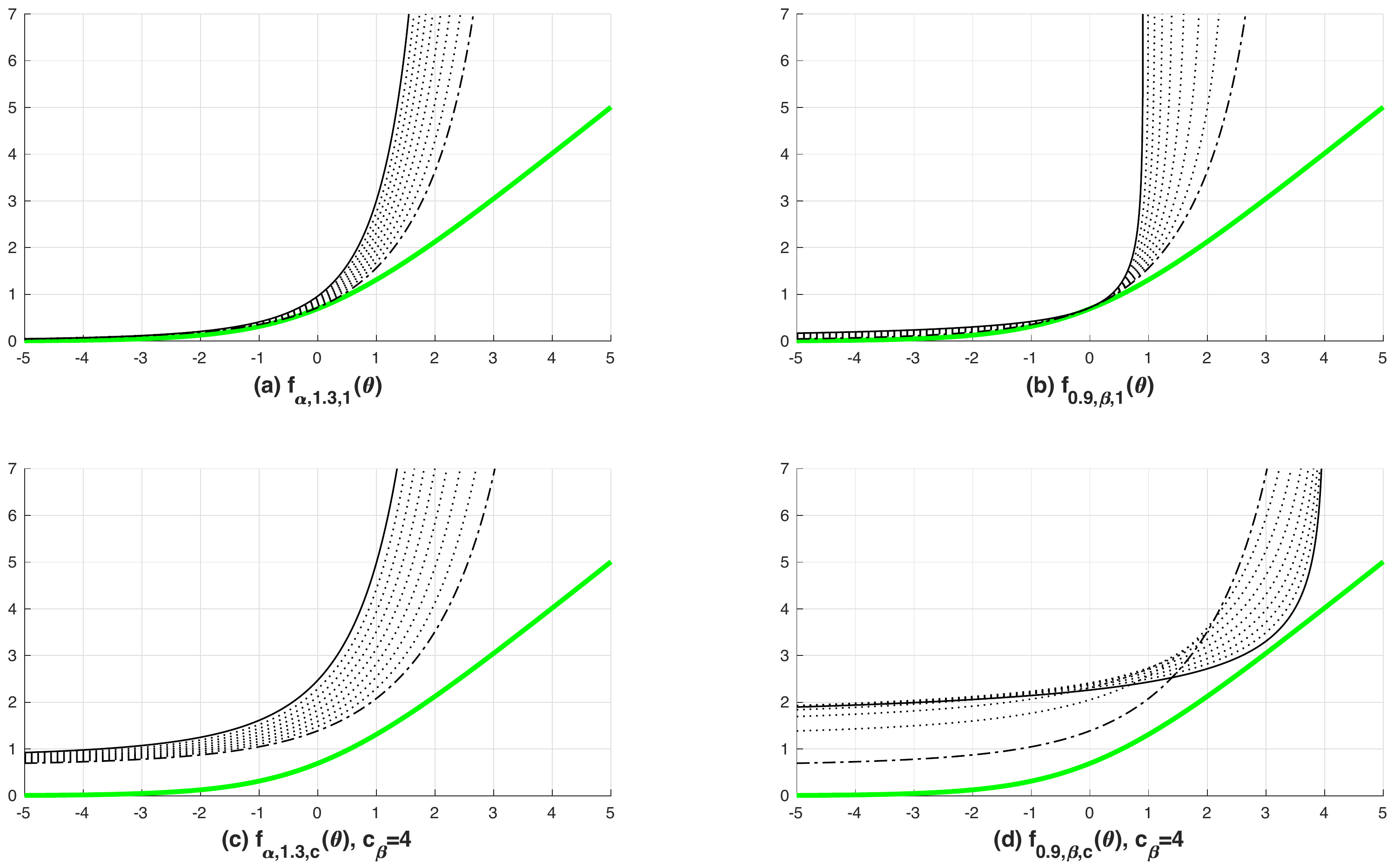} 
\caption{The graphs of the convex extended logistic loss function $f_{\alpha,\beta,c}(\theta) = \ln_{2-\alpha,c}(c + \exp_{2-\beta,c}(\theta))$. The green curve is the logistric loss: $f_{1,1,c}(\theta)$. (a) $f_{\alpha,0.7,1}(\theta)$ with $\alpha=1.9(\hbox{black}),1.8,...,1.2,1.1(\hbox{dash-dot})$. (b) $f_{1.1,\beta,1}(\theta)$ with $\beta=0.7(\hbox{dash-dot}),0.6,...,-0.1(\hbox{black})$. (c) $f_{\alpha,0.7,c}(\theta)$ with $\alpha=1.9(\hbox{black}),1.8,...,1.1(\hbox{dash-dot})$ and $c_{\beta}=4$. (d) $f_{1.1,\beta,c}(\theta)$ with $\beta=0.7(\hbox{dash-dot}),0.6,...,-0.1(\hbox{black})$ and $c_{\beta}=4$.
}
\label{fig:img2}
\end{figure*}

Let us consider the convex extended logistic loss function~\cite{woo19}:
\begin{equation}\label{xlog}
f_{\alpha,\beta,c}(\theta) = \ln_{2-\alpha,c}(c + \exp_{2-\beta,c}(\theta))
\end{equation}
where $c > 0$, $\beta \le \alpha \le 2$, and $\dom f_{\alpha,\beta,c} = \dom(\exp_{2-\beta,c})$. If $f_{\alpha,\beta,c} \in \L_1 \cap C^K(int(\dom f_{\alpha,\beta,c}))$ then we have a log-concave density function:
\begin{equation}\label{pfx}
p_{f_{\alpha,\beta,c}}(b;\theta,\sigma^2) = p_0(b;\sigma^2,\alpha,\beta,c)\exp(-d_{f^*_{\alpha,\beta,c}}(b;\theta)/\sigma^2) = p_1(b;\sigma^2,\alpha,\beta,c)\exp\left(\frac{\inprod{b}{\theta} - f_{\alpha,\beta,c}(\theta)}{\sigma^2} \right)
\end{equation}
where $p_1(b;\sigma^2,\alpha,\beta,c) = p_0(b;\sigma^2,\alpha,\beta,c)\exp(-f_{\alpha,\beta,c}^*(b)/\sigma^2).$  
\begin{theorem}\label{LegendreLogistic}
Consider the convex extended logistic loss function $f_{\alpha,\beta,c}(\theta)$~\eqref{xlog} with $\dom f_{\alpha,\beta,c} = \dom(\exp_{2-\beta,c})$. Then, under the following condition, $f_{\alpha,\beta,c} \in \L_1$ and is analytic on $\dom f_{\alpha,\beta,c}$.
\begin{itemize}
\item $\alpha = \beta  = 1$ and $\dom f_{\alpha,\beta,c} = \R$
\item $\alpha = 2$ and $\beta=1$ and $\dom f_{\alpha,\beta,c} = \R$
\item $\beta < \alpha \le 2$ and $\beta < 1$  and $\dom f_{\alpha,\beta,c} = \R_{<c_{2-\beta}}$
\end{itemize}
where $c_{2-\beta} = \frac{c^{\beta-1}}{1-\beta}$. If $(\alpha,\beta)=(1,1)$ then $p_{f_{\alpha,\beta,c}}(b;\theta,\sigma^2)$~\eqref{pfx} is  Bernoulli distribution.  If $(\alpha,\beta)=(2,1)$ and $c=1$, then it is Poisson distribution.
\end{theorem}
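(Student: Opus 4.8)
The plan is to verify the three defining properties of a convex function of Legendre type (Definition~\ref{legendredef}) for $f_{\alpha,\beta,c}$ directly, to deduce analyticity from that of the extended elementary functions, and then to identify the Bernoulli and Poisson cases by substitution. First I would recall from the Appendix and \cite{woo17,woo19,woo19b} the properties of $\exp_{q,c}$ and $\ln_{q,c}$ that are needed: each is analytic and strictly increasing on the interior of its domain, with derivative identities of the form $\exp_{q,c}'=\kappa_{1}(\exp_{q,c})^{q}$ and $\ln_{q,c}'(y)=\kappa_{2}\,y^{-q}$ for some constants $\kappa_{1},\kappa_{2}>0$; moreover $\exp_{2-\beta,c}$ maps its domain bijectively onto $\R_{++}$, with $\dom(\exp_{2-\beta,c})=\R$ when $\beta=1$ and $\dom(\exp_{2-\beta,c})=\R_{<c_{2-\beta}}$, $\exp_{2-\beta,c}(\theta)\to+\infty$ as $\theta\uparrow c_{2-\beta}$, when $\beta<1$, while $\ln_{2-\alpha,c}$ is analytic and increasing on $\R_{++}$. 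Since $c+\exp_{2-\beta,c}(\theta)>c>0$ always lies in $\R_{++}=\dom(\ln_{2-\alpha,c})$, the composition $f_{\alpha,\beta,c}=\ln_{2-\alpha,c}\circ(c+\exp_{2-\beta,c})$ is well defined and analytic on $int(\dom f_{\alpha,\beta,c})=int(\dom\exp_{2-\beta,c})\neq\emptyset$; in particular $f_{\alpha,\beta,c}\in C^{K}(int(\dom f_{\alpha,\beta,c}))$ for every $K$, which gives both the $C^{1}$ requirement and the stated analyticity.

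For strict convexity I would differentiate twice by the chain rule. Writing $E=\exp_{2-\beta,c}(\theta)>0$ and $a=2-\beta$, $b=2-\alpha$, one obtains, up to a fixed positive constant, $f_{\alpha,\beta,c}'(\theta)=(c+E)^{-b}E^{a}$ and
\[
f_{\alpha,\beta,c}''(\theta)=(c+E)^{-b-1}E^{2a-1}\big[(a-b)E+ac\big]=(c+E)^{-b-1}E^{2a-1}\big[(\alpha-\beta)E+(2-\beta)c\big].
\]
Because $E>0$, $c>0$ and $2-\beta>0$, the bracket is strictly positive in all three cases: when $\alpha>\beta$ (so for $(\alpha,\beta)=(2,1)$ and for $\beta<\alpha\le2,\ \beta<1$) both summands are nonnegative and $(2-\beta)c>0$, and when $\alpha=\beta=1$ the bracket collapses to $(2-\beta)c=c>0$. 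Hence $f_{\alpha,\beta,c}''>0$ on the interior, so $f_{\alpha,\beta,c}$ is strictly convex there and, extended by $+\infty$ outside its convex domain, a proper convex function; it is also lower semicontinuous, being continuous on its domain.

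It remains to check steepness~\eqref{steep}. In the cases $(\alpha,\beta)\in\{(1,1),(2,1)\}$ the domain is $\R$, so $bd(\dom f_{\alpha,\beta,c})=\emptyset$ and \eqref{steep} holds vacuously. In the case $\beta<\alpha\le2$, $\beta<1$, the only finite boundary point is $x=c_{2-\beta}$, and for any interior $y<x$ we have $y-x<0$, so \eqref{steep} reduces to $f_{\alpha,\beta,c}'(\theta)\to+\infty$ as $\theta\uparrow c_{2-\beta}$. From $f_{\alpha,\beta,c}'(\theta)=(c+E)^{\alpha-2}E^{2-\beta}$ (up to a positive constant) and $E=\exp_{2-\beta,c}(\theta)\to+\infty$, the ratio $(c+E)^{\alpha-2}/E^{\alpha-2}\to1$ yields $f_{\alpha,\beta,c}'(\theta)\sim E^{\alpha-\beta}\to+\infty$, using $\alpha-\beta>0$; note in particular that for $\alpha<1$ the function $f_{\alpha,\beta,c}$ itself stays bounded near $c_{2-\beta}$, so it is the blow-up of $\nabla f_{\alpha,\beta,c}$, not of $f_{\alpha,\beta,c}$, that provides steepness. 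With Definition~\ref{legendredef} this gives $f_{\alpha,\beta,c}\in\L_{1}$, and then $f_{\alpha,\beta,c}^{*}\in\L_{1}$ by Theorem~\ref{isoL}. I expect this boundary analysis in the $\beta<1$ case to be the main obstacle, since it hinges on pinning down the exact singular behaviour and the correct endpoint of the $c$-deformed $\exp_{2-\beta,c}$; the convexity step is routine sign-bookkeeping once the chain-rule formula is written down.

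Finally I would substitute to identify the two distributions. When $(\alpha,\beta)=(1,1)$ the deformations trivialize: $\exp_{1,c}=\exp$ and $\ln_{1,c}=\ln$ up to normalizing constants, so $f_{1,1,c}(\theta)=\log(c+e^{\theta})$ up to an additive constant, which is the Bernoulli cumulant $\psi^{*}(\theta)=\log(1+e^{\theta})$ of the Bernoulli Example up to an affine reparameterization of the canonical parameter; hence by~\eqref{pfx} together with Theorem~\ref{nopeneq1}(a) the density $p_{f_{1,1,c}}(b;\theta,\sigma^{2})$ is the Bernoulli distribution. When $(\alpha,\beta)=(2,1)$ and $c=1$ we have $\exp_{1,1}=\exp$ and $\ln_{0,1}(y)=y-1$, so $f_{2,1,1}(\theta)=(1+e^{\theta})-1=e^{\theta}$, the Poisson cumulant; hence $p_{f_{2,1,1}}(b;\theta,\sigma^{2})$ is the Poisson distribution, which completes the argument.
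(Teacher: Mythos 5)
Your proposal is correct and follows essentially the same route as the paper's proof: chain-rule computation of $\nabla^2 f_{\alpha,\beta,c}$ (your formula agrees with the paper's \eqref{varx} after factoring out one power of $E=\exp_{2-\beta,c}(\theta)$), positivity of the bracket $(\alpha-\beta)E+(2-\beta)c$, analyticity of the composition on the open domain, and direct substitution to identify the Bernoulli and Poisson cases. The one point you add that the paper leaves implicit is the explicit verification of the steepness condition \eqref{steep} at the boundary point $c_{2-\beta}$ when $\beta<\alpha\le 2$ and $\beta<1$ (the paper records only strict convexity and openness of $\dom(\exp_{2-\beta,c})$); your estimate $\nabla f_{\alpha,\beta,c}(\theta)\sim E^{\alpha-\beta}\to+\infty$ as $\theta\uparrow c_{2-\beta}$ is correct and closes that small gap.
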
 
\begin{proof}
\begin{itemize}
\item Let $\alpha=\beta=1$. Then $f_{1,1,c}(\theta) = \ln(1 + \exp(\theta)) \in \L_1 \cap C^{\infty}(\R)$ and thus we get Bernoulli distribution:
$
p_{f_{1,1,c}}(b;\theta,1) = p_1(b;1,1,1,c)\exp\left(\inprod{b}{\theta} - \ln(1 + \exp(\theta)) \right).
$
\item Let $\alpha=2$, $\beta=1$, and $c=1$. Then $f_{2,1,1}(\theta) = \exp(\theta) \in \L_1 \cap C^{\infty}(\R)$ and thus we get Poisson distribution: 
$
p_{f_{2,1,1}}(b;\theta,1) = p_1(b;1,2,1,1)\exp\left(\inprod{b}{\theta} - \exp(\theta) \right).
$
\item Let $\beta<\alpha \le 2$ and $\beta < 1$. Consider $f_{\alpha,\beta,c}(\theta) = \ln_{2-\alpha,c}(c + \exp_{2-\beta,c}(\theta))$
\begin{equation}\label{varx}
\nabla^2 f_{\alpha,\beta,c}(\theta) = \left[(\alpha-\beta) + (2-\beta)\frac{c}{\exp_{2-\beta,c}(\theta)}\right]\frac{\left(\exp_{2-\beta,c}(\theta)\right)^{4-2\beta}}{(c + \exp_{2-\beta,c}(\theta))^{3-\alpha}} > 0
\end{equation}
where $\exp_{2-\beta,c}(\theta) > 0$. Therefore, $f_{\alpha,\beta,c}$ is strictly convex on $\dom(\exp_{2-\beta,c}) = \R_{< c_{2-\beta}}$ which is open. From $\exp_{2-\beta,c}(\theta) = c(1-\theta/c_{2-\beta})^{\frac{1}{\beta-1}}$ with $\theta < c_{2-\beta}$ and $\beta<1$, we have $\exp_{2-\beta,c}(\theta) \in C^{\infty}(\R_{< c_{2-\beta}})$ and thus
$f_{\alpha,\beta,c} \in C^{\infty}(\dom f_{\alpha,\beta,c})$. Hence, we have a parameterized distribution between Bernoulli and Poisson:
$$
p_{f_{\alpha,\beta,c}}(b;\theta,\sigma^2) = p_1(b;\sigma^2,\alpha,\beta,c)\exp\left( \frac{\inprod{b}{\theta} - f_{\alpha,\beta,c}(\theta)}{\sigma^2} \right)
$$
where $p_1(b;\sigma^2,\alpha,\beta,c) = p_0(b;\sigma^2,\alpha,\beta,c)\exp(-f_{\alpha,\beta,c}^*(b)/\sigma^2)$ is an appropriate base measure satisfying $\int_{\cal B} p_{f_{\alpha,\beta,c}}(b;\theta,\sigma^2)\nu(db)=1$.
\end{itemize}
\end{proof}
From $\nabla f_{\alpha,\beta,c}(\theta) = \mu$, we get the mean of the probability distribution $p_{f_{\alpha,\beta,c}}$~\eqref{pfx}. Actually, when $\alpha=\beta=1$, $\mu = sigm(\theta)$ where $sigm(\theta)$ is a sigmoid function, and when $\alpha=2$ and $\beta=1$, we get $\mu = c\exp(\theta)$. However, when $\beta < 1$ and $\beta < \alpha \le 2$, we get a rather complicated mean
$
\mu = \frac{(\exp_{2-\beta,c}(\theta))^{2-\beta}}{(c + \exp_{2-\beta,c}(\theta))^{2-\alpha}}.
$
In terms of variance, by simple calculation, we have the variance 
$
var(b) = \sigma^2 \nabla^2 f_{\alpha,\beta,c}(\theta) > 0 
$ where $\nabla^2 f_{\alpha,\beta,c}(\theta)$ in \eqref{varx}. We do not a closed form of the unit  variance function $V(\mu)$ in terms of $\mu$, when $\beta<\alpha\le 2$ and $\beta<1$.

\section{Conclusion\label{sec6}}
This work introduces the K-LED model (Legendre exponential dispersion model). The cumulant function of the presented model is a convex function of Legendre type which has the continuous partial derivative of $K$-th order on its interior of the convex domain. The main advantage of the K-LED model is that it is generated by Bregman-divergence-guided log-concave density function having coercivity shape constraints, and thus the mean parameter space (or the first cumulant) is found by simple computation. Additionally, based on a subclass of the mean-variance relation of quasi-likelihood function, we build up the $2$-LED model. Under the mild regularity condition, we show the equivalence between quasi-likelihood function and the regular $2$-LED model. A typical example of the $2$-LED model is the extended Tweedie distribution with power variance function. This model is developed through  Bregman-Tweedie divergence ($\beta<0$) or Bregman-beta divergence ($\beta\ge0$). The base functions of these Bregman-divergences are induced from the extended exponential function or the extended logarithmic function. Notice that, by using the convex extended logistic loss function, we also show that a new parameterized K-LED model, which includes Bernoulli distribution and Poisson distribution, can be designed for the demonstration of the merit of the K-LED model. Last but not least, $3$-LED and $4$-LED could be further developed based on skewness and kurtosis~\cite{hy01}. 

\section*{Acknowledgments}
This paper is supported by the Basic Science Program through the NRF of Korea funded by the Ministry of Education (NRF-2015R101A1A01061261).

\section*{Appendix}
This section summarizes extended exponential and logarithmic functions and the corresponding convex functions of Legendre type which are introduced in \cite{woo19b}. 

Let us start with the definition of the extended exponential function~\cite{woo19b}. \begin{equation}\label{expraw}
\exp_{2-\beta,c}(x) = (c^{\beta-1} + (\beta-1)x)^{\frac{1}{\beta-1}}
\end{equation}
where $\exp_{1,c}(x) = c\exp(x)$ and $\dom(\exp_{2-\beta,c}) = \{ x \in \R \;|\; \exp_{2-\beta,c}(x) \in \R \}$. When $c=1$, we can recover the well-known generalized exponential function~\cite{amari00,tsallis09}. As observed in \cite{bar-lev86,woo17}, it is more convenient to use an equivalence class for the extended exponential function~\eqref{expraw}. 
\begin{definition}\label{def:exp}
Let $\beta \in \R$ and $x \in \dom(\exp_{2-\beta})$. Then the extended exponential function~\eqref{expraw} is simplified as
\begin{equation}\label{exexp}
\exp_{2-\beta}(x) \define
\left\{\begin{array}{l} 
\exp([x]), \qquad\qquad\quad\;\; \hbox{ if } \beta=1\\
((\beta-1)[x])^{1/(\beta-1)} , \quad \hbox{ otherwise }
\end{array}\right.
\end{equation}
where  $\dom(\exp_{2-\beta}) = \{ x \in \R \;|\; \exp_{2-\beta}(x) \in \R \}$ is in Table \ref{table3} and $c_{2-\beta} = \frac{c^{\beta-1}}{1-\beta} \in \R$. If $\beta \not= 1$, $[x] = x - c_{2-\beta}$ and if $\beta = 1$, $[x] = x + \ln(c)$. Note that, when $\beta<1$, $sign(c) = sign(\exp_{2-\beta}(x))$. 
\end{definition}
For simplicity, we use $x$, instead of the equivalence class $[x]$. Though we use an equivalence class~\eqref{exexp} for the extended exponential function~\eqref{expraw}, the role of $c$ (i.e., $c_{2-\beta}$) is important in machine learning. For instance, $c_{2-\beta}$ determines the margin of Hinge-Logitron, the loss function of which is the Perceptron-augmented extended logistic loss function. For more details, see \cite{woo19}. 

Now, consider the extended logarithmic function~\cite{woo17}:
\begin{equation}\label{lograw}
\ln_{2-\beta,c}(x) = c_{2-\beta} - x_{2-\beta}
\end{equation}
where $\ln_{1,c}(x) = \ln(x) - \ln(c)$. Note that, if $c=1$ then we recover the well-known generalized logarithmic function~\cite{amari00,tsallis09}. The proposed extended logarithmic function is also formulated with an equivalence class. That is, $\ln_{2-\beta}(x) =  [\ln_{2-\beta,c}(x)] = \ln_{2-\beta,c}(x) - c_{2-\beta}$. 
\begin{definition}\label{def:log}
Let $\beta \in \R$ and $x \in \dom(\ln_{2-\beta})$ then
\begin{equation}\label{exlog}
\ln_{2-\beta}(x) \define
\left\{\begin{array}{l} 
\ln(x), \qquad\quad\;\; \hbox{ if } \beta=1\\
\frac{1}{\beta-1}x^{\beta-1} , \qquad \hbox{ otherwise }
\end{array}\right.
\end{equation}
where $\dom(\ln_{2-\beta})$ is in Table \ref{table3}.
\end{definition}

\begin{table*}[h]
\centerline{\scriptsize
\begin{tabular}{c|c|ccc|ccc}
\hline\hline
   & $\beta = 1$ &  & $\beta > 1$ & & & $\beta < 1$ & \\ \cline{3-8} 
                 &   &  &  $\beta \in \R_e$ &  $\beta \in \R \setminus \R_e$   & &  $\beta \in \R_e$ &  $\beta \in \R \setminus \R_e$ \\ \hline 
 $\dom (\exp_{2-\beta})$   & $\R$  & & $\R$ & $\R_+$ &  & $\R_{--}$ / $\R_{++}$ & $\R_{--} $ \\ \hline
 $\dom (\ln_{2-\beta})$ & $\R_{++}$ &  & $\R$ & $\R_+$ & & $\R_{++}$ /  $\R_{--}$ & $\R_{++}$ \\ \hline\hline 
\end{tabular}}
\caption{The reduced domains of the extended exponential function $\exp_{2-\beta}$~\eqref{exexp} and the extended logarithmic function $\ln_{2-\beta}$~\eqref{exlog} for the bijection  $\exp_{2-\beta}= \ln_{2-\beta}^{-1} : \dom (\exp_{2-\beta}) \rightarrow \dom (\ln_{2-\beta})$. See \cite{woo19b} for more details. 
}\label{table3}
\end{table*}

The following theorem presents $\Psi$, the indefinite integral of the extended exponential function, satisfying the conditions of the convex function of Legendre type~\cite{roc70}. For more details, see \cite{woo19b}. Note that $\R_e = \{ 2k/(2l+1) \;|\; k,l \in \mathbb{Z} \}$.
\begin{theorem}\label{corPsi}
Let  $x \in \dom\Psi$ and $\exp_{2-\beta}$ be an extended exponential function in \eqref{exexp}. Then,
\begin{equation}\label{basefnD}
\Psi(x) = \int_d^{x} \exp_{2-\beta}(\xi)d\xi = 
\left\{\begin{array}{l} 
-\ln(-x) \qquad\quad\; \hbox{ if } \beta = 0 \\
\exp(x) \qquad\qquad\; \hbox{ if } \beta = 1 \\
\frac{1}{\beta}[(\beta-1)x]^{\frac{\beta}{\beta-1}} \;\; \hbox{otherwise }
\end{array}\right.
\end{equation}
is a convex function of Legendre type on $\dom  \Psi$:
\begin{equation}\label{conditionLegendreD}
\left\{
\begin{array}{l}
\hbox{I. entire region:}\\
\qquad 1<\beta,\; \beta \in \R_e  \hskip 0.75cm\hbox{ and } \hskip 0.2cm \dom\Psi = \R,\\
\qquad \beta=1,\; \hskip 1.8cm\hbox{ and } \hskip 0.1cm \dom\Psi = \R,\\
\hbox{II. positive region:}\\
\qquad 0 < \beta < 1, \beta \in \R_e \hskip 0.25cm \hbox{ and }  \hskip 0.1cm  \dom\Psi = \R_{++},\\
\qquad \beta < 0, \qquad \beta \in \R_e \hskip 0.2cm \hbox{ and }  \hskip 0.1cm  \dom\Psi = \R_{+},\\
\hbox{III. negative region:}\\
\qquad 0 \le \beta < 1,  \hskip 1.25cm\hbox{ and }  \hskip 0.1cm \dom\Psi = \R_{--},\\
\qquad \beta<0,  \hskip 1.88cm \hbox{ and }  \hskip 0.1cm \dom\Psi = \R_{-}. \\
\end{array}
\right.
\end{equation}
Here, all constant terms are dropped.
\end{theorem}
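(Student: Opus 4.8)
The plan is to dispatch the three closed forms for $\Psi$ one at a time: in each case I first confirm that the displayed expression is an antiderivative of $\exp_{2-\beta}$, and then I verify, on the domain listed in \eqref{conditionLegendreD}, the three requirements of Definition \ref{legendredef} (smoothness, strict convexity, steepness), together with the background hypothesis that $\Psi$, extended by $+\infty$ off its domain, is lower semicontinuous, convex and proper (routine: $\Psi$ is continuous on its domain and, at the finite boundary point when there is one, either blows up to $+\infty$ or takes a finite value).

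For the integral itself, Definition \ref{def:exp} gives $\exp_{2-\beta}(\xi)=\exp(\xi)$ when $\beta=1$, so $\Psi(x)=\exp(x)$ up to the constant absorbed by the lower limit $d$. For $\beta\neq 0,1$ write $\exp_{2-\beta}(\xi)=((\beta-1)\xi)^{1/(\beta-1)}$; the substitution $u=(\beta-1)\xi$ reduces the integral to $\frac{1}{\beta-1}\int u^{1/(\beta-1)}\,du=\frac{1}{\beta}u^{\beta/(\beta-1)}$, i.e. $\Psi(x)=\frac{1}{\beta}[(\beta-1)x]^{\beta/(\beta-1)}$. The exponent $1/(\beta-1)$ degenerates to $-1$ exactly at $\beta=0$, where $\exp_{2-\beta}(\xi)=-\xi^{-1}$ on the negative axis and the antiderivative is $-\ln(-x)$ (equivalently the $\beta\to 0$ limit of the generic formula after dropping the constant).

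Next, on each region of \eqref{conditionLegendreD} the interior of $\dom\Psi$ is a nonempty open half-line (or all of $\R$), on which $\Psi$ is a power, exponential, or logarithm of an affine function with non-vanishing argument, hence $\Psi\in C^1$ there (in fact analytic). For strict convexity I would \emph{not} use pointwise positivity of $\Psi''$ — that fails at the origin when $\beta\in(1,2)\cap\R_e$, where $\Psi''(0)=0$ — but instead argue that $\Psi'=\exp_{2-\beta}$ is strictly increasing on the interior, being the inverse of $\ln_{2-\beta}$, equivalently the composition of the strictly monotone affine map $\xi\mapsto(\beta-1)\xi$ with the strictly monotone power $t\mapsto t^{1/(\beta-1)}$ on the reduced domain of Table \ref{table3}. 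The only real work here is the bookkeeping that decides when the base $(\beta-1)\xi$ is negative and the power therefore needs $\beta\in\R_e$ to stay real; this is precisely what forces the case split in \eqref{conditionLegendreD}, and it is routine but slightly fiddly.

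The crux is the steepness condition \eqref{steep}. Because $n=1$, $bd(\dom\Psi)$ is either empty — when $\dom\Psi=\R$, namely $\beta=1$ and $1<\beta\in\R_e$, making \eqref{steep} vacuous — or the single point $\{0\}$ in all remaining cases ($\dom\Psi$ equal to $\R_{++}$, $\R_{--}$, $\R_{+}$ or $\R_{-}$). For $x=0$ and $y\in int(\dom\Psi)$ the segment $x+t(y-x)=ty$ stays in $int(\dom\Psi)$ for small $t>0$ and $\langle\nabla\Psi(ty),y-x\rangle=\Psi'(ty)\,y$; since every remaining case has $\beta<1$, the exponent $1/(\beta-1)$ (or the map $-\xi^{-1}$ when $\beta=0$) is negative, so $|\Psi'(ty)|\to\infty$ as $t\downarrow 0$, and a sign check — the base $(\beta-1)(ty)$ and the direction $y$ have the signs dictated by which half-line $\dom\Psi$ is — gives $\Psi'(ty)\,y\to-\infty$. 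Combining this with the previous step yields $\Psi\in\L_1$ on each listed domain. I expect the main obstacle to be this step paired with the domain bookkeeping of the previous one: one must check that the domains in \eqref{conditionLegendreD} are exactly those on which $\Psi'$ blows up at $0$ with the sign that makes the directional derivative $-\infty$ — and, conversely, that unlisted cases such as $1<\beta\notin\R_e$ genuinely fail, since there $1/(\beta-1)>0$, $\Psi'(0)=0$, and steepness is lost.
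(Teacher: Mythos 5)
The paper does not actually prove Theorem \ref{corPsi}: it is stated in the appendix with only the remark ``for more details, see \cite{woo19b}'', so there is no in-paper argument to compare yours against. Taken on its own terms, your direct verification is sound and hits the right pressure points: the antiderivative computation via $u=(\beta-1)\xi$ is correct; replacing pointwise positivity of $\nabla^2\Psi$ (which indeed fails at the origin when $1<\beta\in\R_e$) by strict monotonicity of $\nabla\Psi=\exp_{2-\beta}$ is the right way to get strict convexity on all of $int(\dom\Psi)$; and reducing steepness to the single boundary point $0$, where $\abs{\nabla\Psi(ty)}\to\infty$ because $1/(\beta-1)<0$ in every case with nonempty boundary, is exactly what makes \eqref{conditionLegendreD} the correct list (and correctly excludes $1<\beta\notin\R_e$, where $\nabla\Psi(0^+)=0$). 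The one place you defer work is the ``fiddly'' sign bookkeeping: strict monotonicity of a composition of two strictly monotone maps must be checked to be \emph{increasing} in each region (e.g.\ for $0<\beta<1$, $\beta\in\R_e$ on $\R_{++}$ both $\xi\mapsto(\beta-1)\xi$ and $u\mapsto u^{1/(\beta-1)}$ on $\R_{--}$ are decreasing, so the composition increases), and the sign of $\nabla\Psi(ty)\cdot y$ near $0$ must be verified case by case; these checks all go through, but a complete write-up should display them.
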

In consequence, we can define {\it Bregman-Tweedie divergence} (i.e., Bregman divergence associated with $\Psi$) as  
\begin{equation}\label{tweedieBregman}
D_{\Psi}(x|y) = \Psi(x) - \Psi(y) - \inprod{\nabla\Psi(y)}{x-y}
\end{equation}
where $(x,y) \in \dom\Psi \times int(\dom\Psi)$. Here, $\dom\Psi$ is in \eqref{conditionLegendreD}. Note that $\Phi(x)= \int_d^x \ln_{\alpha}(t)dt$ was studied in \cite{woo17} for the characterization of $\beta$-divergence based on Bregman divergence framework. {\it Bregman-beta divergence} (i.e., Bregman divergence associated with $\Phi$) is defined by
\begin{equation}\label{betaBregman}
D_{\Phi}(x|y) = \Phi(x) - \Phi(y) - \inprod{\nabla\Phi(y)}{x-y}
\end{equation}
where $\Phi$ is introduced in the following theorem.
\begin{theorem}\label{legendre_beta}
Let $x \in dom\Phi$ and 
\begin{eqnarray}\label{basefn}
\Phi(x) = \int_d^{x} \ln_{2-\beta}(t) dt
= \left\{\begin{array}{l} 
 -\log x, \quad\;\;\;\;\; \hbox{ if } \beta=0,\\ 
x\log x - x, \;\;\; \hbox{ if } \beta =1,\\
\frac{1}{\beta(\beta-1)}x^{\beta}, \;\;\;\;\;\; \hbox{otherwise }
\end{array}\right.
\end{eqnarray}
where  $\ln_{2-\beta}(t)$ is the extended logarithmic function in Definition \ref{def:log}. Then, $\Phi$ in \eqref{basefn} is a convex function of Legendre type on $\dom\Phi$ below: 
\begin{equation}\label{conditionLegendre}
\left\{
\begin{array}{l}
\hbox{I. entire region:}\\
\qquad 1<\beta,\; \beta \in \R_e  \hskip 0.85cm\hbox{ and } \hskip 0.1cm \dom\Phi = \R,\\
\hbox{II. positive region:}\\
\qquad  0 <  \beta \le 1 \hskip 1.55cm \hbox{ and }  \hskip 0.1cm \dom\Phi  = \R_{+},\\
\qquad \beta \le 0  \hskip 2.2cm \hbox{ and }  \hskip 0.1cm  \dom\Phi = \R_{++},\\
\hbox{III. negative region:}\\
\qquad 0< \beta < 1, \; \beta \in \R_e  \hskip 0.3cm\hbox{ and }  \hskip 0.1cm \dom\Phi = \R_{-},\\
\qquad \beta < 0 , \;\qquad \beta \in \R_e \hskip 0.3cm \hbox{ and }  \hskip 0.1cm \dom\Phi = \R_{--}. \\
\end{array}
\right.
\end{equation}
For simplicity, all constants in $\Phi(x)$ are dropped. 
\end{theorem}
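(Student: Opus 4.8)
The plan is to check, for each subcase of \eqref{conditionLegendre}, the three requirements in Definition \ref{legendredef} for $\Phi$ (extended by $+\infty$ off $\dom\Phi$) to be a convex function of Legendre type. The single computational input is the fundamental theorem of calculus: since $\Phi(x)=\int_d^x\ln_{2-\beta}(t)\,dt$, on $int(\dom\Phi)$ we have $\nabla\Phi(x)=\ln_{2-\beta}(x)$ and hence $\nabla^2\Phi(x)=x^{\beta-2}$, with $\ln_{2-\beta}$ read off from Definition \ref{def:log} and its reduced domain from Table \ref{table3}. From there the whole argument is driven by the sign and admissibility of the exponents $\beta-1$ and $\beta-2$ under the partition $\R=\R_e\cup\R_o\cup\R_x$. (A slicker route: $\nabla\Phi=\ln_{2-\beta}=(\exp_{2-\beta})^{-1}=(\nabla\Psi)^{-1}$, so $\Phi=\Psi^*$ up to a constant, and then $\Phi\in\L_1$ would follow from Theorem \ref{corPsi} via Theorem \ref{isoL}; but pinning down the effective domain of $\Psi^*$ in each subcase is about as much work as the direct check, so I would argue directly.) The easy part is the preliminaries: in every subcase $int(\dom\Phi)$ is $\R$ or an open half-line, hence nonempty; $\Phi$ is continuous on $\dom\Phi$, tends to $+\infty$ at the finite boundary point when $0\notin\dom\Phi$ and is finite there when $0\in\dom\Phi$, so its $+\infty$-extension is proper, convex, and lower semicontinuous; and $\nabla\Phi=\ln_{2-\beta}$ is continuous on $int(\dom\Phi)$ by Table \ref{table3}, giving $\Phi\in C^1(int(\dom\Phi))$.

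Next I would establish strict convexity by showing $\nabla\Phi=\ln_{2-\beta}$ is strictly increasing on $int(\dom\Phi)$. On the positive and negative regions this is immediate: there $int(\dom\Phi)$ is an open half-line on which $\nabla^2\Phi(x)=x^{\beta-2}>0$ (for the $\R_{--}$ subcases the hypothesis $\beta\in\R_e$ makes $\beta-2\in\R_e$, so $x^{\beta-2}$ is well defined and positive for negative $x$). The one subtlety is the entire region $1<\beta$, $\beta\in\R_e$, where $\beta-2\in\R_e$ forces $\nabla^2\Phi$ to vanish at $x=0$ — indeed to blow up there when $1<\beta<2$, so that $\Phi$ is $C^1$ but not $C^2$ at $0$; here strict convexity instead follows from $\beta-1\in\R_o$ being positive, so $x\mapsto\frac1{\beta-1}x^{\beta-1}$ is a strictly increasing odd power on all of $\R$, and the subcase $\beta=1$ is covered by $\nabla\Phi=\ln$.

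The steepness condition \eqref{steep} is the main obstacle and the only place the case split really matters. For the entire-region subcases $\dom\Phi=\R$, so $bd(\dom\Phi)=\emptyset$ and \eqref{steep} holds vacuously. In every other subcase $bd(\dom\Phi)=\{0\}$, and since $n=1$ the steepness requirement at $x=0$ collapses to a one-sided limit of $\nabla\Phi=\ln_{2-\beta}$: if $\dom\Phi$ lies to the right of $0$ one needs $\ln_{2-\beta}(t)\to-\infty$ as $t\downarrow0$, and if $\dom\Phi$ lies to the left of $0$ one needs $\ln_{2-\beta}(t)\to+\infty$ as $t\uparrow0$, so that with $y-x$ of the matching sign $\inprod{\nabla\Phi(x+t(y-x))}{y-x}\to-\infty$. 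For $\beta=1$ this is $\ln t\to-\infty$. For $\beta\ne1$ write $\ln_{2-\beta}(t)=\frac1{\beta-1}t^{\beta-1}$, where $\beta-1<0$ throughout cases II and III: on the positive-domain subcases $t^{\beta-1}\to+\infty$ and the negative coefficient $\frac1{\beta-1}$ turns this into $-\infty$; on the negative-domain subcases $\beta\in\R_e$ gives $\beta-1\in\R_o$, so $t^{\beta-1}$ keeps the sign of $t$ and tends to $-\infty$ as $t\uparrow0$, whence $\frac1{\beta-1}t^{\beta-1}\to+\infty$. Combining nonemptiness of the interior, $C^1$-smoothness, strict convexity, and steepness in each subcase yields $\Phi\in\L_1$ on the domains listed in \eqref{conditionLegendre}. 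The only thing that can actually go wrong is the bookkeeping — keeping straight, subcase by subcase, whether $0$ is the left or the right endpoint of $\dom\Phi$ and matching that against the sign of the exponent $\beta-1$.
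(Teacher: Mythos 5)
Your verification is correct in every subcase, and the bookkeeping you flag as the risk point (matching the side of the boundary point $0$ against the sign behaviour of $t^{\beta-1}$ under the $\R_e/\R_o$ classification) is carried out consistently; in particular you correctly isolate the one non-obvious case, $1<\beta<2$ with $\beta\in\R_e$, where $\Phi$ fails to be $C^2$ at the origin and strict convexity must come from the strictly increasing odd power $x^{\beta-1}$ rather than from positivity of $\nabla^2\Phi$. The paper itself supplies no proof of this theorem --- it is stated in the Appendix with a pointer to \cite{woo17} --- so your direct check of the three Legendre conditions (nonempty open interior with $C^1$ smoothness, strict convexity, steepness) is exactly the argument the paper leaves implicit, and there is nothing to reconcile.
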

As observed in \cite{woo17}, due to the invariance properties with respect to the affine function of the base function of Bregman divergence, the structure of Bregman-beta divergence $D_{\Phi}$ does not change, irrespective of the choice of the affine function in the base function. Hence, for simplicity, we add $x$ to $\Phi$~\eqref{basefn}, when $\beta=1$.


\begin{thebibliography}{30}
\bibitem{amari00}{\sc S. Amari and H. Nagaoka,} {\em Methods of Information Geometry}, AMS, 2000.
\bibitem{amari16}{\sc S. Amari,} {\em Information geometry and its applications}, Springer, 2016.
\bibitem{banerjee05}{\sc A. Banerjee, S. Merugu, I. Dhillon, and J. Ghosh,} ``Clustering with Bregman Divergences'', {\em J. of Mach. Learn. Res.}, 6 (2005), pp. 1705-1749.
\bibitem{barndorff14}{\sc O. Barndorff-Nielsen,} {\em Information and Exponential Families in Statistical Theory}, Wiley, 2014.
\bibitem{bar-lev86}{\sc S. Bar-Lev and P. Enis,} ``Reproducibility and natural exponential families with power variance functions'', {\em The Annals of Statistics}, 14 (1986), pp. 1507-1522.
\bibitem{basbug17}{\sc M. Basbug and B. Engelhardt,} ``AdaCluster: adaptive clustering for heterogeneous data", {\em arXiv:1510.05491v2} (2017), pp. 1-34.
\bibitem{basu98}{\sc A. Basu, I. Harris, N. Hjort, and M. Jones,} ``Robust and efficient estimation by minimizing a density power divergence'', {\em Biometrika}, 85 (1998), pp. 549-559.
\bibitem{bauschke97}{\sc H. Bauschke and J. Borwein,} ``Legendre functions and the method of random Bregman projections", 
{\em Journal of Convex Analysis}, 4 (1997), pp. 27-67.
\bibitem{bauschke11}{\sc H. Bauschke and P. Combettes,} {\em Convex Analysis and Monotone Operator Theory in Hilbert Spaces}, (2011), Springer.
\bibitem{bregman67}{\sc L. Bregman,} ``The relaxation method of finding the common points of convex sets and its application to the solution of problems in convex programming", {\em USSR Computational Mathematics and Mathematical Physics}, 7 (1967), pp. 200-217.
\bibitem{brown86}{\sc L. Brown,} {\em Fundamentals of statistical exponential families with applications in statistical decision theory}, Institute of Mathematical Statistics, Hayworth, CA, USA, (1986). 
\bibitem{cichocki10}{\sc A. Cichocki and S. Amari,} ``Families of Alpha- Beta- and Gamma- divergences: Flexible and robust measures of similarities'', {\em Entropy}, 12 (2010), pp. 1532-1568.
\bibitem{cule10}{\sc M. Cule and R. Samworth,} ``Theoretical properties of the log-concave maximum likelihood estimator of a multidimensional density'', {\em Elec. Journal of Stat.}, 4 (2010), pp. 254-270.
\bibitem{samworth18}{\sc R. Samworth,}
``Recent progress in log-concave density estimation'', {\em Statist. Sci.}, 33 (2018), pp. 493-509.
\bibitem{depierro86}{\sc A. de Pierro and A. Iusem,}  ``A relaxed version of Bregman's method for convex programming'', {\em J. of Optimization Theory and Applications}, 51 (1986), pp. 421-440.

\bibitem{dikmen15}{\sc O. Dikmen, Z. Yang, and E. Oja,} ``Learning the information divergence", {\em IEEE Trans. Pattern Recognition and Machine Learning}, 37 (2015), pp. 1442-1454.

\bibitem{eguchi01}{\sc S. Eguchi and Y. Kano,} ``Robustifying maximum likelihood estimation'', {\em Technical Report}, Institute of Statistical Mathematics, June 2001.

\bibitem{dias10b}{\sc M. Figueiredo and J. Bioucas-Dias,} ``Restoration of Poissonian images using alternating direction optimization", {\em IEEE Trans. Image Processing}, 19 (2010), pp. 3133-3145.

\bibitem{fevotte09}{\sc C. Fevotte, N. Bertin, and J.-L. Durrieu,} ``Nonnegative Matrix Factorization with the Itakura-Saito Divergence: with application to music analysis",
{\em Neural Computation}, 21 (2009), pp. 793-830.

\bibitem{jiang12}{\sc K. Jiang, B. Kulis, and M. Jordan,} ``Small-variance asymptotics for exponential family Dirichlet process mixture models", {\em Proc. on Neural Information Processing Systems}, 25 (2012).
 
\bibitem{jorgensen97}{\sc B. Jorgensen,} {\em The Theory of Dispersion Models}, Chapman \& Hall, 1997.

\bibitem{halmos49}{\sc P. Halmos and L. Savage,} ``Application of the Radon-Nikodym theorem to the theory of sufficient statistics", {\em Ann. Math. Statist.} 20 (1949), pp. 225-241.

\bibitem{hir96}{\sc J.-B. Hiriart-Urruty and C. Lemarechal,} {\em Convex Analysis and Minimization Algorithms I, II}, Springer-Verlag, 1996.

\bibitem{hy01}{\sc A. Hyv\"arinen, J. Karhunen, and E. Oja,} {\em Independent component analysis}. Wiley Interscience, 2001.

\bibitem{kass97}{\sc R. Kass and P. Vos,} {\em Geometrical foundations of asymptotic inference}. Wiley Interscience, 1997.

\bibitem{kulis12}{\sc B. Kulis and M. Jordan,}  ``Revisiting k-means: New algorithms via bayesian nonparametrics", {\em ICML}, 2012.

\bibitem{lecellier10}{\sc F. Lecellier, J. Fadili, S. Jehan-Besson, G. Aubert, M. Revenu, and  E. Saloux,} ``Region-based active contours with exponential family observations", {\em J. Math. Imaging Vis.}, 36 (2010), pp. 28-45.

\bibitem{lukacs58}{\sc E. Lukacs,} ``Some extensions of a theorem of Marcinkiewicz", {\em Pacific J. Math.}, 8 (1958), pp. 487-501. 

\bibitem{mccullagh89}{\sc P. McCullagh and J. Nelder,} {\em Generalized Linear Models}, Chapman\& Hall/CRC, 1989.

\bibitem{morris82}{\sc C. Morris,} ``Natural exponential families with quadratic variance functions'', {\em The Annals of Statistics}, 10 (1982), pp. 65-80.

\bibitem{murphy12}{\sc K. Murphy,} {\em Machine Learning}, MIT Press, 2012.

\bibitem{muller02}{\sc H.-J. M\"uller, R. Horn, and A. Moreira,} ``From Gaussian to inverse Gaussian statistics in SAR imagery,'' {\em Proc. IGARSS}, (2002), pp. 2486-2488. 

\bibitem{nielsen09}{\sc F. Nielsen and R. Nock,} ``Sided and symmetrized Bregman centroids", 
{\em IEEE Trans. Information Theory} 55 (2009), pp.2882-2904.

\bibitem{paul13}{\sc G. Paul, J. Cardinale, and I. Sbalzarini,} ``Coupling image restoration and segmentation: a generalized linear model/Bregman perspective", {\em Int. J. Comput. Vis.}, 104 (2013), pp. 69-93.

\bibitem{pistone99}{\sc G. Pistone and H. Wynn,} ``Finitely generated cumulants", {\em Statistica Sinica}, 9 (1999), pp. 1029-1052.

\bibitem{reid10}{\sc M. Reid and R. Williamson,} "Composite Binary Losses", 
{\em J. of Mach. Learn. Res.} 11 (2010), pp.2387-2422.

\bibitem{reid11}{\sc M. Reid and R. Williamson,} ``Information, divergence and risk for binary experiments", 
{\em J. of Mach. Learn. Res.} 12 (2011), pp.731-817.

\bibitem{roc70}{\sc R. Rockafellar,} {\em Convex Analysis},
Princeton University Press, Princeton, 1970.

\bibitem{rudin92}{\sc L. Rudin, S. Osher, and E. Fatemi,} ``Nonlinear total variation based noise removal algorithms", {\em Physica D}, 60 (1992), pp. 259-268.

\bibitem{teboulle07}{\sc M. Teboulle,} ``A unified continuous optimization framework for center-based clustering methods'', {\em J. of Mach. Learn. Res.}, 8 (2007), pp. 65-102.

\bibitem{tsallis09}{\sc C. Tsallis,} {\em Introduction to nonextensive statistical mechanics: approaching a complex world}, Springer, 2009.

\bibitem{samek13}{\sc W. Samek, D. Blythe, K.-R. Muller, and M. Kawanabe,} ``Robust spatial filtering with beta divergence'', {\em Proc. on Neural Information Processing Systems}, (2013), pp. 1007-1015.

\bibitem{saumard14}{\sc A. Saumard and J. Wellner,} ``Log-concavity and strong log-concavity: A review", {\em Statistics Surveys}, 8 (2014), pp.45-114.

\bibitem{tweedie84}{\sc M. Tweedie,}  ``An index which distinguishes between some important exponential families", {\em Proc. Indian Stat. Inst. Golden Jubilee International Conference}, (1984), pp. 579–604.

\bibitem{rooyen15}{\sc B. van Rooyen, A. Menon, and R. Williamson,}  ``Learning with symmetric label noise: The importance of being unhinged", {\em Proc. on Neural Information Processing Systems}, (2015), pp. 10-18.

\bibitem{wainwright08}{\sc M. Wainwright and M. Jordan,} ``Graphical models, Exponential families, and variational inference'', {\em Foundations and Trends in Machine Learning}, 1 (2008), pp. 1-305.

\bibitem{wedderburn74}{\sc R. Wedderburn,} ``Quasi-likelihood functions, generalized linear models, and the Gauss-Newton method'', {\em Biometrika}, 61 (1974), pp. 439-447.

\bibitem{woob16}{\sc H. Woo,} ``Beta-divergence based two-phase segmentation model for synthetic aperture radar images'', {\em Electronics Letters}, 52 (2016), pp. 1721-1723.

\bibitem{woo17}{\sc H. Woo,} ``A characterization of the domain of Beta-divergence and its connection to Bregman variational model", {\em Entropy}, 19 (2017), 482.

\bibitem{woo19}{\sc H. Woo,} ``Logitron: Perceptron-augmented classification framework based on extended logistic loss function", {\em  arXiv: 1904.02958}, (2019), pp. 1-16.

\bibitem{woo19b}{\sc H. Woo,} ``The Bregman-Tweedie classification model", {\em arXiv: 1907.06923v1}, (2019), pp. 1-21.

\end{thebibliography}
\end{document}